\renewcommand{\le}{\leqslant}
\renewcommand{\ge}{\geqslant}
\newcommand{\ptl}{\partial}
\newcommand{\rr}{{\mathbb{R}}}
\newcommand{\cc}{{\mathbb{C}}}
\newcommand{\la}{\lambda}
\newcommand{\hh}{{\mathbb{H}}}
\newcommand{\nn}{{\mathbb{N}}}
\newcommand{\sph}{{\mathbb{S}}}
\newcommand{\hhh}{\mathcal{H}}
\newcommand{\lie}{\mathcal{L}}
\newcommand{\sg}{\sigma}
\newcommand{\Om}{\Omega}
\newcommand{\eps}{\varepsilon}
\newcommand{\ga}{\gamma}
\newcommand{\Ga}{\Gamma}
\newcommand{\nuh}{\nu_{h}}
\newcommand{\ntnh}{\frac{\escpr{N,T}}{|N_h|}}
\newcommand{\mnh}{|N_{h}|}
\newcommand{\de}{\delta_{E}}
\newcommand{\dga}{\dot{\ga}}
\newcommand{\escpr}[1]{\langle#1\rangle}
\newcommand{\inv}[1]{#1^\ell}
\DeclareMathOperator{\tor}{Tor}
\DeclareMathOperator{\intt}{int}
\DeclareMathOperator{\reach}{reach}
\DeclareMathOperator{\unp}{Unp}
\DeclareMathOperator{\ttan}{Tan}
\DeclareMathOperator{\ttanh}{Tan_{H}}
\DeclareMathOperator{\nor}{Nor}
\DeclareMathOperator{\norh}{Nor_{H}}
\newtheorem{theorem}{Theorem}[section]
\newtheorem{proposition}[theorem]{Proposition}
\newtheorem{lemma}[theorem]{Lemma}
\newtheorem{corollary}[theorem]{Corollary}
\theoremstyle{definition}
\newtheorem{remark}[theorem]{Remark}
\newtheorem{example}[theorem]{Example}
\theoremstyle{remark}
\newenvironment{enum}{\begin{enumerate}
}{\end{enumerate}}
\numberwithin{equation}{section}
\begin{document}

\title{Tubular neighborhoods in the sub-Riemannian Heisenberg groups}

\author[M.~Ritor\'e]{Manuel Ritor\'e} \address{Departamento de
Geometr\'{\i}a y Topolog\'{\i}a \\
Universidad de Granada \\ E--18071 Granada \\ Espa\~na}
\email{ritore@ugr.es}

\date{\today}

\thanks{Research supported by MEC-Feder grants MTM2007-61919 and  MTM2013-48371-C2-1-P, and Junta de Andalucía grant FQM-325}
\subjclass[2000]{53C17, 49Q20} 
\keywords{Heisenberg group, Carnot-Carathéodory distance, tubular neighborhoods, distance function, singular set, Steiner's formula}

\begin{abstract}
We consider the Carnot-Carathéodory distance $\delta_E$ to a closed set $E$ in the sub-Riemannian Heisenberg groups $\hh^n$, $n\ge 1$. The $\hh$-regularity of $\delta_E$ is proved under mild conditions involving a general notion of singular points. In case $E$ is a Euclidean $C^k$ submanifold, $k\ge 2$, we prove that $\delta_E$ is $C^k$ out of the singular set. Explicit expressions for the volume of the tubular neighborhood when the boundary of $E$ is of class $C^2$ are obtained, out of the singular set, in terms of the horizontal principal curvatures of $\ptl E$ and of the function $\escpr{N,T}/|N_h|$ and its tangent derivatives.
\end{abstract}

\maketitle

\thispagestyle{empty}

\bibliographystyle{abbrv} 

\section{Introduction}

In this paper we shall consider tubular neighborhoods of closed sets in the sub-Rie\-ma\-nnian Heisenberg groups $\hh^n$, $n\ge 1$, endowed with their Carnot-Carathéodory distance. We are mainly interested in the regularity of the distance function to a closed set $E$, and in obtaining an expression for the volume of a tubular neighborhood of $E$ in terms of its radius and of the local geometry of $\ptl E$. The corresponding formula in Euclidean space was obtained by Steiner for convex sets \cite{steiner} (see also \cite[\S~4.2]{MR3155183}), by Weyl for smooth submanifolds of Euclidean spaces \cite{MR1507388}, and by Federer for sets of positive reach \cite{MR0110078}. Weyl's result provided the starting point to obtain a generalization of the Gauss\--Bo\-nnet formula, \cite{zbMATH03098391}, \cite{zbMATH03098393}. In all these cases, the volume of a tubular neighborhood is a polynomial whose coefficients, in the smooth case, are integrals of certain scalar functions associated with the Riemmanian curvature tensor. In convex set theory, the coefficients are the well-known Minkowski's \emph{Quermassintegrals} and, in the theory of positive reach sets, the total mass of the curvature measures. Gray's monograph \cite{MR2024928} contains a comprehensive historical account, and generalizations of Weyl's formula to Riemannian manifolds.

In sub-Riemannian manifolds, these problems have been considered in recent works. Arcozzi and Ferrari discussed the case of open sets $\Om\subset\hh^1$ with boundary $S$ and, under the hypothesis of $C^k$ regularity of $S$, $k\ge 2$, they proved that the Carnot-Carathéodory distance function is of class $C^{k-1}$ out of the singular set of points in $S$ where the tangent plane is horizontal, see Theorem~1.1 in \cite{MR2299576}. The same authors later studied the Hessian of this distance function in \cite{MR2386836}. A Steiner type formula has been recently obtained in $\hh^1$ by Balogh et al. The authors proved in Theorem~1.1 in \cite{MR3388879} that the volume of a tubular neighborhood of a domain with $C^\infty$ boundary has, out of the singular set, a power series expansion whose coefficients are integrals of polynomials of certain second order derivatives of the distance function. The proof of this result was obtained by taking iterated divergences of the distance function. A similar formula was obtained by Ferrari \cite{MR2384641} by computing the flow of the horizontal gradient of the distance function. Properties of the Carnot-Carathéodory distance in \textit{special} $2$-step Carnot groups, focusing on the case of $2$-step ones, can be found in  Arcozzi et al.~\cite{afm-17}. Amongst several remarkable results, the authors give a sub-Riemannian version of the Gauss Lemma in Theorem~1.2, and a proof of the $C^k$ regularity, $k\ge 2$, of the distance to a $C^k$ hypersurface without singular points in Theorem~1.1. 
Interesting results on the distance function to curves and surfaces in $\hh^1$, with applications, have been obtained by Arcozzi \cite{arcozzi-talk}. The results in \cite{MR2299576} and \cite{MR2386836} have been used by Ferrari and Valdinoci \cite[\S~2]{MR2461257} to obtain geometric inequalities in $\hh^1$.
Several recent works use the distance function and techniques of Integral Geometry to obtain geometric inequalities in sub-Riemannian manifolds (e.g., \cite{MR2548248}, \cite{1509.00950}, \cite{huang-agms}). From the Brunn-Minkowski type inequality obtained by Leonardi and Masnou \cite{MR2177813}, lower estimates of the volume of a tubular neighborhood of a given set can be obtained. 

In this work, we deal with properties of the Carnot-Carathéodory distance to a closed set $E$ in the sub-Riemannian Heisenberg groups $\hh^n$, $n\ge 1$. The cases where $E$ has $C^2$ boundary and when $E$ is an $m$-dimensional submanifold of class $C^2$ of $\hh^n$ will be specially considered.

The paper has been organized into several sections. In section~\ref{sec:preliminaries} we fix notation and recall basic facts and geometric properties of the Heisenberg groups $\hh^n$, including geodesics, Jacobi fields, basic properties of the Carnot-Carátheodory distance and the horizontal second fundamental form. Many of the results included in this section are known while others have never explicitly appear in the literature. 

In section~\ref{sec:distance} we look at basic properties of the distance function $\delta_E$ to a closed set $E\subset\hh^n$, focusing on the behavior of length-minimizing geodesics. It is a trivial fact that $\delta_E$ is lipschitz with respect to the Carnot-Carathéodory distance and so differentiable almost everywhere by Pansu-Rademacher's Theorem \cite{MR979599}. In the first part of this section, following Federer \cite{MR0110078}, we define a \emph{tangent cone} $\ttan(E,p)$ at a point $p\in E$ roughly as the set of tangent vectors to curves starting at $p$ and contained in $E$, and a \emph{horizontal normal cone} $\norh(E,p)$ as the set of horizontal vectors orthogonal to $\ttan(E,p)$. This notion is independent from the one of tangent cone of a finite perimeter set given in $\hh^n$, see \cite{MR1871966}. Another relevant notion here is that of \emph{singular} point $p\in E$, i.e., one for which the tangent cone $\ttan(E,p)$ is contained in one of the half-spaces of $T_p\hh^n$ determined by the horizontal distribution hyperplane $\hhh_p$. The set of singular points will be denoted by $E_0$. We shall say that a point is \emph{regular} if it is not singular, and we also define the reach of $E$ at a given point $p\in E$, the metric projection $\xi_E$ to $E$ and the set of points $\unp(E)$ with unique metric projection. Standard properties such as the continuity of $\xi_E$ on $\unp(E)$, Proposition~\ref{prop:xicont}, the continuity of the curvature of length-minimizing geodesics on $\unp(E)\setminus E$, Proposition~\ref{prop:lacont}, and the continuity of the initial speed of length-minimizing geodesics on $\unp(E)\setminus E$ for regular points, Proposition~\ref{prop:vcont}, are obtained. In Lemma~\ref{lem:federer} we show that the distance function to a closed set is $\hh$-differentiable in the interior of $(\unp(E)\setminus E)\cap\xi_E^{-1}(\ptl E\setminus E_0)$. Hence the Carnot-Carathéodory distance $\delta_E$ to $E$ is $\hh$-differentiable in the interior of the set of points with unique metric projection to a regular point in $E$.

Given a closed set $E\subset\hh^n$ and a point $q\not\in\hh^n$, there always exists a length-minimizing  geodesic, connecting a point in $\ptl E$ to $q$, realizing the distance from $E$ to $q$. In the second part of section~\ref{sec:distance} we analyze the behavior of such geodesics. We prove in Lemma~\ref{lem:mingeo} that the initial speed of length-minimizing geodesics lies in the horizontal normal cone to the set, and that the curvature of the geodesic lies in a precisely described interval of real numbers. In particular, when the boundary of the closed set $E$ is a $C^1$ hypersurface, we can prove in Theorem~\ref{thm:exp} that the points in $\ptl E$ at minimum distance are regular points where the tangent hyperplane is not horizontal, that the initial speed of a length-minimizing geodesic joining $E$ to a given point is the outer horizontal unit normal to $E$, and that the curvature of the geodesic is exactly
\begin{equation}
\label{*}
\tag{*}\la=2\frac{\escpr{N,T}}{|N_h|},
\end{equation}
where $N$ is the outer unit normal to $\ptl E$, $T$ is the Reeb vector field on $\hh^n$, and $N_h$ is the orthogonal projection of $N$ to the horizontal distribution $\hhh$. The significance of the quantity $\la$ in \eqref{*} for surfaces in $\hh^1$ was recognized by Arcozzi and Ferrari, who called it the \emph{imaginary curvature} of $S$, see \S~1 in \cite{MR2386836}. From Theorem~\ref{thm:exp} we deduce that length-minimizing geodesics leaving $E$ begin at regular points and are unique. This allows to define an exponential map and to precisely describe the regularity of this map and of the distance function. Since $\la$ goes to $\infty$ when we approach the singular set,  the distance where the geodesics are length-minimizing become very small, so that the reach of the set $E$ at $p\in\ptl E$ approaches $0$ when $p$ approaches the singular set, see Corollary~\ref{cor:reach0}. When the boundary of $E$ is merely $\hh$-regular in the sense of Franchi, Serapioni and Serra-Cassano \cite{MR1871966} we can prove that the initial speed of a length-minimizing geodesic is the outer horizontal unit normal, but we don't get additional information on the curvature of the geodesic, see Theorem~\ref{thm:ch1exp}. Finally, in Examples~\ref{ex:vertplane}, \ref{ex:isolated} and \ref{ex:curve} we analyze the behavior of length-minimizing geodesics in vertical planes and near isolated singular points and singular curves in particular examples. The last two examples should be compared to the results by Arcozzi and Ferrari in \cite[\S~3]{MR2299576}.

In section~\ref{sec:distsm} we treat the regularity of the distance function to a given $m$-dimen\-sional submanifold $S$ of class $C^k$, $k\ge 2$, in $\hh^n$. Here the tangent $\ttan(S,p)$ coincides with the classical tangent space $T_pS$ of the submanifold $S$, and the set of singular points $S_0\subset S$ is composed of those $p\in S$ for which $T_pS\subset\hhh_p$. As a consequence of the techniques developed in section~\ref{sec:distance}, we are able to prove that length-minimizing geodesics leaving $S$ from a regular point have a unique geodesic curvature, which also allows us to define an exponential map. Our main result in this section, Theorem~\ref{thm:tube}, is that the reach of $S$ is positive on compact subsets $K$ of $S\setminus S_0$, and that the distance function $\delta_S$ is of class $C^k$ near $S$ on $\xi^{-1}_S(K)$ when $S$ is of class $C^k$, $k\ge 2$. A corresponding result, Theorem~\ref{thm:reachc11}, is proved when $S$ is a hypersurface of class $C^{1,1}$, generalizing a result by Arcozzi and Ferrari in $\hh^1$, \cite{MR2299576}. In particular, it is obtained in Proposition~\ref{prop:parallelc11} that the parallel hypersurfaces are of class $C^{1,1}$.

Finally, in section~\ref{sec:steiner}, we obtain a Steiner type formula for the volume of the tubular neighborhood of a set with $C^2$ boundary in $\hh^n$. To obtain this formula we follow a classical approach, using Jacobi fields associated to variations by length-minimizing geodesics to compute the volume element along a variation by parallels, and using a coarea formula. In the case of $\hh^1$ we get in  Theorem~\ref{thm:steiner1} the following explicit formula for the tubular neighborhood $U_r$ of radius $r>0$ of points whose metric projection lies in an open subset $U\subset S$ such that $\overline{U}\subset S\setminus S_0$:
\begin{equation*}
|U_r|= \sum_{i=0}^4\int_U\bigg\{\int_0^r a_i f_i(\la, s)\,ds\bigg\}\,dS.
\end{equation*}
Here $\la$ is the function $2\escpr{N,T}/|N_h|$ defined in \eqref{*}, the functions $f_i$ are explicit trigonometric analytic functions defined in \eqref{eq:fi} and \eqref{eq:Fi}, $dS$ is the Riemannian area element associated to the canonical left-invariant Riemannian metric in $\hh^n$, and the coefficients $a_i$ are given by the expressions
\begin{equation*}
\begin{split}
a_{0}&=|N_{h}|,
\\
a_{1}&=|N_h| H,
\\
a_{2}&=-4|N_h|e_1\bigg(\ntnh\bigg),
\\
a_{3}&=-4e_2\bigg(\ntnh\bigg),
\\
a_{4}&=-4He_2\bigg(\ntnh\bigg)-4|N_h|\bigg(e_1\bigg(\ntnh\bigg)\bigg)^2,
\end{split}
\end{equation*}
where $H$ is the sub-Riemannian mean curvature of $S\setminus S_0$, $e_1=J(\nuh)$, where $\nuh=N_h/|N_h|$ is the horizontal unit normal to $S$, $J$ is the standard 90 degrees horizontal rotation, to be defined in \S~\ref{sub:heisenberg}, and $e_2=\escpr{N,T}\nuh-\mnh T$. It is worth noting that all these coefficients depend on the local geometry of the surface $S$. A similar formula has been obtained by Balogh et al., see Theorem~1.1 in \cite{MR3388879}. The quantity $|U_r|$ can be developed as a power series in $r$. The expression up to order three is
\begin{multline*}
|U_r|=A(U)\,r+\frac{1}{2}\bigg(\int_UHdP\bigg)r^2
\\
-\frac{2}{3}\bigg(\int_U\bigg\{e_1\bigg(\ntnh\bigg)+\bigg(\ntnh\bigg)^2\bigg\}dP\bigg)r^3+o(r^4),
\end{multline*}
where $U$ is an open set in $S$ with $\overline{U}\subset S\setminus S_0$. The quantity $A(U)$ is the sub-Riemannian area of $U$, $dP$ is the sub-Riemannian area element on $S$, $H$ is the sub-Riemannian mean curvature of $S$ computed as the sum of the principal curvatures of the horizontal second fundamental form defined in \S~\ref{sec:hor2nd}, the vector $e_1$ is equal to $J(\nuh)$, where $\nuh$ is the horizontal unit normal to $S$.

In the case of $\hh^n$, $n\ge 2$, we obtain the formula as the integral of the modulus of the determinant of a computable square $(2n)$ matrix, whose coefficients depend on the geometry of the boundary hypersurface. We obtain in equation \eqref{eq:steiner23} in Theorem~\ref{thm:steiner23} that
\begin{equation*}
\begin{split}
|U_r|&=A(U)r+\frac{1}{2}\bigg(\int_U HdP\bigg)r^2
\\
&-\frac{1}{6}\bigg(\int_U\bigg(4e_1\bigg(\ntnh\bigg)+(2n+2)\bigg(\ntnh\bigg)^2+|\sg|^2-H^2\bigg)dP\bigg)r^3,
\\
&+o(r^4),
\end{split}
\end{equation*}
where we are using the same notation as above. In addition, $|\sg|^2$ is the squared norm of the horizontal second fundamental form of $S$, defined in \S~\ref{sec:hor2nd}.

We conclude the paper by showing how Steiner's formula looks like when $S$ is a umbilic hypersurface in $\hh^n$, a class recently introduced by Cheng et al.~in \cite{cchy}.

The author wishes to thank Sebastiano Nicolussi Golo for his careful reading of the first version of this manuscript, and to both referees for their useful comments.

\section{Preliminaries}
\label{sec:preliminaries}

\subsection{The Heisenberg group}
\label{sub:heisenberg}

The Heisenberg group $\hh^n$ is the $(2n+1)$-dimensional space $\rr^{2n+1}$, endowed with the group law $*$ given by
\[
(z,t)*(w,s)=(z+w, t+s+\sum_{i=1}^n \text{Im}(z_i\bar{w}_i)),
\]
for $(z,t)$, $(w,s)\in\cc^n\times\rr\equiv\rr^{2n+1}$. In $\hh^n$ we may consider the contact $1$-form
\[
\theta:=dt+\sum_{i=1}^n (-y_{i}\,dx_{i}+x_{i}\,dy_{i}),
\]
which satisfies
\[
d\theta=\sum_{i=1}^n2\,dx_{i}\wedge dy_{i},
\]
and the \emph{horizontal distribution} $\hhh:=\ker(\theta)$ generated by the left-invariant vector fields
\[
X_{i}:=\frac{\ptl}{\ptl x_{i}}+y_{i}\,\frac{\ptl}{\ptl t}, \qquad
Y_{i}:=\frac{\ptl}{\ptl y_{i}}-x_{i}\,\frac{\ptl}{\ptl t}, \qquad
i=1,\ldots,n.
\]
A vector field is \emph{horizontal} if it is tangent to the horizontal distribution at every point. We shall say that a $C^1$ curve $\ga:I\to\hh^n$ is \emph{horizontal} if the tangent vector $\dot{\ga}(t)$ belongs to $\hhh_{\ga(t)}$ for any $t\in I$. A basis of left-invariant vector fields is given by
\begin{equation}
\label{eq:basis}
\{X_1,...,X_n,Y_1,...,Y_n,T\},
\end{equation}
where
\[
T:=\frac{\ptl}{\ptl t}
\]
is the Reeb vector field of the contact manifold $(\hh^n,\theta)$: the only vector field such that $\theta(T)=1$ and $\mathcal{L}_T\theta=0$, where $\mathcal{L}$ denotes the Lie derivative in $\hh^n$. Any left-invariant vector field is a linear combination (with constant coefficients) of the ones in \eqref{eq:basis}. The only non-trivial bracket relation between the vector fields in \eqref{eq:basis} is
\begin{equation}
\label{eq:liebracket}
[X_i,Y_i]=-2T, \qquad i=1,\ldots,n.
\end{equation}
Since $d\theta(X,Y)=X(\theta(Y))-Y(\theta(X))-\theta([X,Y])$, condition \eqref{eq:liebracket} implies that the distribution $\hhh$ is completely non-integrable by Frobenius Theorem. A field of endomorphisms $J:\hhh\to\hhh$ such that $J^2=-\text{Id}$ is defined by $J(X_i):=Y_i$, $J(Y_i):=-X_i$, $i=1,\ldots,n$. We extend it to the whole tangent space by setting $J(T):=0$.

We choose on $\hh^n$ the Riemannian metric $\escpr{\cdot,\cdot}$  so that the basis $\{X_{i}, Y_{i}, T: i=1,\ldots,n\}$ is orthonormal. The norm of a vector field $X$ with respect to this metric will be denoted by $|X|$, and the associated Levi-Civita connection by $D$. If $X$ is any vector field, we shall denote by $X_h:=X-\escpr{X,T}\,T$ its orthogonal projection to the horizontal distribution. Since $T$ is orthogonal to $\hhh$ and $\theta(T)=1$, we infer that $\theta(X)=\escpr{X,T}$ for any vector field $X$. Writing any pair of left-invariant vector fields $X$, $Y$ as a linear combination (with constant coefficients) of the elements of the basis \eqref{eq:basis} and using \eqref{eq:liebracket} we have
\begin{equation}
\label{eq:bracket2}
[X,Y]=2\,\escpr{X,J(Y)}\,T.
\end{equation}
In particular this implies
\[
(\lie_{T}J)(X)=[T,J(X)]-J([T,X])=0.
\]
Moreover, if $X$, $Y$ are left-invariant and horizontal, we get
\[
d\theta(X,Y)=-\theta([X,Y])=-2\,\escpr{X,J(Y)},
\]
which implies that the quadratic form
\[
X\in\hhh\mapsto d\theta(X,J(X))=2\,|X|^2
\]
is positive definite.
Let $\nabla$ be any affine connection in $\hh^n$ with torsion tensor $\tor(X,Y):=\nabla_XY-\nabla_YX-[X,Y]$. Assuming it is a metric connection with respect to the the left-invariant Riemannian metric $\escpr{\cdot,\cdot}$ previously defined, we have
\begin{equation}
\label{eq:metric}
X\,\escpr{Y,Z}=\escpr{\nabla_XY,Z}+\escpr{Y,\nabla_XZ}.
\end{equation}
Then $\nabla$ can be computed in terms of the scalar product and the torsion tensor using Koszul formula
\begin{align*}
2\,\escpr{\nabla_XY,Z}=X\escpr{Y,Z}&+Y\escpr{X,Z}-Z\escpr{X,Y}
\\
&+\escpr{[X,Y],Z}-\escpr{[Y,Z],X}-\escpr{[X,Z],Y}
\\
&+\escpr{\tor(X,Y),Z}-\escpr{\tor(Y,Z),X}-\escpr{\tor(X,Z),Y}.
\end{align*}
Since the scalar product of left-invariant vector fields is a constant function, the Levi-Civita connection $D$ is torsion-free, and \eqref{eq:bracket2}, we obtain, for any pair of left-invariant vector fields $X$, $Y$,
\begin{equation}
\label{eq:levicivita}
D_XY=\theta(Y)\,J(X)+\theta(X)\,J(Y)-\escpr{X,J(Y)}\,T.
\end{equation}
Observe that, in particular, for $X$ left-invariant,
\[
D_XT=J(X),\qquad D_XX=0.
\]

The \emph{pseudo-hermitian connection} $\nabla$ in $\hh^n$ is the only metric connection whose torsion tensor $\tor$ satisfies 
\begin{equation}
\label{eq:torsion}
\tor(X,Y)=-2\,\escpr{X,J(Y)}\,T,
\end{equation}
for any pair of arbitrary vector fields $X$, $Y$, see \cite{MR1000553} and \cite{MR1267892}. Equation \eqref{eq:bracket2} then implies
\[
\tor(X,Y)=-[X,Y],
\]
for any pair of left-invariant vector fields $X$, $Y$. From Koszul formula we get
\begin{equation}
\label{eq:parallel}
\nabla_X Y=0,
\end{equation}
for any pair of left-invariant vector fields $X$, $Y$.

The pseudo-hermitian connection and the Levi-Civita connection can be related by Koszul formula to get
\[
2\,\escpr{\nabla_{X}Y,Z}=2\,\escpr{D_{X}Y,Z}+\escpr{\tor(X,Y),Z}
-\escpr{\tor(X,Z),Y}-\escpr{\tor(Y,Z),X}.
\]

If $R$ is the curvature operator associated to the pseudo-hermitian connection $\nabla$, equation \eqref{eq:parallel} implies
\begin{equation}
R(X,Y)Z=\nabla_X\nabla_YZ-\nabla_Y\nabla_XZ-\nabla_{[X,Y]}Z=0
\end{equation}
for $X$, $Y$, $Z$ left-invariant vector fields. This implies that the connection $\nabla$ is flat.

If $X$, $Y$ are left-invariant, then $J(Y)$ is also left-invariant and
\[
\nabla_X J(Y)-J(\nabla_XY)=0.
\]
This implies $\nabla J=0$ ($J$ is integrable in the sense of Frobenius).

\subsection{Horizontal curves and geodesics in $\hh^n$}
\label{sub:geodesics}

We refer the reader to \cite[\S~3]{MR2435652} for detailed arguments.  A \emph{smooth geodesic} in $\hh^n$ is a smooth horizontal curve $\ga:I\to\hh^n$ which is a critical point of the Riemannian length $L(\ga):=\int_{I}|\dot{\ga}(s)|\,ds$ for any variation by horizontal curves $\ga_{\eps}$ with fixed endpoints. Consider a variation $\{\ga_u\}$, $|u|\le \eps$, of $\ga=\ga_0$ with variational vector field $U:=\ptl\ga_u/\ptl u$. The variation of $\escpr{\dot{\ga}_u,T}$ in the direction of $U$ was computed in \cite{MR2271950} and is given by $U\,\escpr{\dot{\ga}_u,T}=\dot{\ga}\,\escpr{U,T}+2\,\escpr{\dot{\ga},J(U)}$. Hence if $\{\ga_u\}$ are horizontal curves then $U$ satisfies the equation
\begin{equation}
\label{eq:admissible}
\dot{\ga}\,\escpr{U,T}+2\,\escpr{\dot{\ga},J(U)}=0.
\end{equation}
Conversely, if $U$ satisfies equation \eqref{eq:admissible} we choose a vector field $V$ along $\ga$ so that $\dot{\ga}\,\escpr{U,T}+2\,\escpr{\dot{\ga},J(U)}\neq 0$ (for instance $V(s):=sT_{\ga(s)}$). We consider the variation $F(s,u,v):=\exp_{\ga(s)}(uU_{\ga(s)}+vV_{\ga(s)})$, where $\exp$ is the exponential map with respect to the Riemannian metric $\escpr{\cdot,\cdot}$, and the function
\[
f(s,u,v):=\escpr{\frac{\ptl F}{\ptl s}(s,u,v),T_{F(s,u,v)}}.
\]
Then we have $f(s,0,0)=0$, $\tfrac{\ptl f}{\ptl u}(s,0,0)=0$, $\tfrac{\ptl f}{\ptl v}(s,0,0)\neq 0$. By the Implicit Function Theorem, there exists $v(s,u)$ with $v(s,0)=0$, such that $s\mapsto F(s,u,v(s,u))$ is a horizontal curve for $u$ small. Moreover, since $\ptl v/\ptl u=0$ by the Implicit Function Theorem we have that the associated variational vector field is $U$.

So assume that $\ga:I\to\hh^n$ is a smooth regular ($\dot{\ga}\neq 0$) horizontal curve reparameterized to have constant speed ($|\dga|=c\in\rr\setminus\{0\}$). The derivative of the length for a variation of $\ga$ by horizontal curves is given by
\begin{equation}
\label{eq:1stlength}
\frac{d}{d\eps}\bigg|_{\eps=0} 
L(\ga_{\eps})=-\int_{I}\escpr{\nabla_{\dot{\ga}}\dot{\ga},U},
\end{equation}
where by $\nabla_{\dot{\ga}}V$, with $V$ vector field along $\ga$, we have denoted the covariant derivative of $V$ along $\ga$ with respect to the pseudo-hermitian connection $\nabla$. Observe that $\nabla_{\dot{\ga}}\dot{\ga}$ is orthogonal to both $\dot{\ga}$ (since $|\dga|$ is constant) and $T$ (since $T$ is parallel for $\nabla$).  Consider an orthonormal basis of $T\hh^n$ along $\ga$ given by $T$, $\dot{\ga}$, $J(\dot{\ga})$, $Z_{1}$, $\ldots$, $Z_{2n-2}$.  As in the case of the first Heisenberg group $\hh^1$ \cite[\S~3]{MR2271950}, we take any smooth function $f:I\to\rr$ vanishing at the endpoints of $I$ and such that $\int_{I}f=0$.  Then the vector field $U$ along $\ga$ defined by the conditions $U_{h}=fJ(\dot{\ga})$ and $\escpr{U,T}=2\,\int_{I} f$ satisfies \eqref{eq:admissible}.  From \eqref{eq:1stlength} we conclude that $\escpr{D_{\dot{\ga}}\dot{\ga},J(\dot{\ga})}$ is constant.  Now let $g:I\to\rr$ be any smooth function vanishing at the endpoints of $I$.  Then the vector field $U=gZ_{i}$, for $i=1,\ldots,2n-2$, satisfies \eqref{eq:admissible}, and hence $D_{\dot{\ga}}\dot{\ga}$ is orthogonal to $Z_{i}$ for all $i=1,\ldots,2n-2$.  So we obtain that the horizontal geodesic $\ga:I\to\hh^n$ satisfies the equation
\begin{equation}
\label{eq:geodesic}
\nabla_{\dot{\ga}}\dot{\ga}+\la\,J(\dot{\ga})=0,
\end{equation}
for some constant $\la\in\rr$.  If $\ga$ satisfies \eqref{eq:geodesic} we shall say that $\ga$ is a smooth geodesic of \emph{curvature} $\la/|\dga|$. Observe that any curve satisfying \eqref{eq:geodesic} has constant speed since 
\[
\dga|\dga|^2=2\escpr{\nabla_{\dga}\dga,\dga}=-2\la\escpr{J(\dga),\dga}=0.
\]
Moreover, the notion of curvature of a smooth geodesic is invariant by reparameterization with constant speed: in case $\ga$ satisfies \eqref{eq:geodesic} and $c\in\rr$ is different from $0$, we define $\ga_c(s):=\ga(cs)$. Then we have
\[
\nabla_{\dga_c}{\dga_c}=c^2\nabla_{\dga} \dga=-\la c^2J(\dga)=-\la cJ(\dga_c),
\]
that has curvature $\la c/|\dga_c|=\la/|\dga|$ as claimed. If $\ga$ is parameterized by arc-length and satisfies \eqref{eq:geodesic} then $\ga_c$ is a smooth geodesic with constant speed and curvature $\la$.

For $\la\in\rr$, $p\in\hh^n$, and $v\in T_{p}\hh^n$, $v\neq 0$, we shall denote by
\begin{equation}
\ga_{p,v}^\la
\end{equation}
the geodesic $\ga:\rr\to\hh^n$ satisfying \eqref{eq:geodesic} with initial conditions $\ga(0)=p$, $\dot{\ga}(0)=v$.

The curve $\ga_{p,v}^\la$ has curvature $\la/|v|$. Let $\ga=\ga_{p,v}^\la$. If $c\neq 0$ then $\ga_c$ is a geodesic satisfying \eqref{eq:geodesic} with constant $c\la$ and initial conditions $\ga_c(0)=p$ and $\dga_c(0)=c\dga(0)=cv$. Hence we have $\ga_c(s)=\ga_{p,cv}^{c\la}(s)$ and so
\begin{equation}
\label{eq:geodrelation}
\ga_{p,v}^\la(cs)=\ga_{p,cv}^{c\la}(s).
\end{equation}

The equations of a geodesic can be computed easily: let 
\[
\ga(s)=(x_{1}(s), y_{1}(s), \ldots, x_{n}(s), y_{n}(s), t(s))
\]
be a 
horizontal geodesic. Then
\begin{align*}
\dot{\ga}(s)&=\sum_{i=1}^n \dot{x}_{i}(s)\,(X_{i})_{\ga(s)}
+\dot{y}_{i}(s)\,(Y_{i})_{\ga(s)},
\\
\dot{t}(s)&=\sum_{i=1}^n (\dot{x}_{i} y_{i}-x_{i}\dot{y}_{i})(s).
\end{align*}
From \eqref{eq:geodesic}, the coordinates of $\ga$ satisfy the system 
\begin{align*}
\ddot{x}_{i}&=\la\,\dot{y}_{i},
\\
\ddot{y}_{i}&=-\la\,\dot{x}_{i},
\end{align*}
with initial conditions $x_{i}(0)=(x_{0})_{i}$,
$y_{i}(0)=(y_{0})_{i}$, $\dot{x}_{i}(0)=A_{i}$,
$\dot{y}_{i}(0)=B_{i}$, and $\sum_{i=1}^n (A_{i}^2+B_{i}^2)=|\dga(0)|^2$.

Integrating these equations, for $\la=0$, we obtain
\begin{align*}
x_{i}(s)&=(x_{0})_{i}+A_{i}s, \\
y_{i}(s)&=(y_{0})_{i}+B_{i}s, \\
t(s)&=t_{0}+\sum_{i=1}^n (A_{i}(y_{0})_{i}-B_{i}(x_{0})_{i})\,s,
\end{align*}
which are horizontal Euclidean straigth lines in $\hh^n$.

Integrating, for $\la\neq 0$, we obtain
\begin{equation}
\label{eq:geodesics}
\begin{split}
x_{i}(s)&=(x_{0})_{i}+A_{i}\,\bigg(\frac{\sin(\la s)}{\la}\bigg)
+B_{i}\,\bigg(\frac{1-\cos(\la s)}{\la}\bigg),
\\
y_{i}(s)&=(y_{0})_{i}-A_{i}\,\bigg(\frac{1-\cos(\la s)}{\la}\bigg)
+B_{i}\,\bigg(\frac{\sin(\la s)}{\la}\bigg),
\\
t(s)&=t_{0}+\frac{|\dga(0)|^2}{\la}\,\bigg(s-\frac{\sin(\la s)}{\la}\bigg)
\\
&\qquad\qquad+\sum_{i=1}^n\bigg\{
(A_{i}(x_{0})_{i}+B_{i}(y_{0})_{i})\bigg(\frac{1-\cos(\la s)}{\la}\bigg)\bigg\}
\\
&\qquad\qquad
-\sum_{i=1}^n\bigg\{(B_{i}(x_{0})_{i}-A_{i}(y_{0})_{i})
\bigg(\frac{\sin(\la s)}{\la}\bigg)\bigg\}.
\end{split}
\end{equation}

For future reference, we shall consider the analytic functions
\begin{equation}
\label{eq:FGH}
F(x):=\frac{\sin(x)}{x},\quad  G(x):=\frac{1-\cos(x)}{x}, \quad H(x):=\frac{x-\sin(x)}{x^2},
\end{equation}
the analytic functions
\begin{equation}
\label{eq:Fi}
\begin{split}
F_1(x)&:=\frac{\sin(x)}{x},
\\
F_2(x)&:=\frac{1-\cos(x)}{x^2},
\\
F_3(x)&:=\frac{\sin(x)-x\cos(x)}{x^3},
\\
F_4(x)&:=\frac{2-2\cos(x)-x\sin(x)}{x^4},
\\
K(x)&:=\frac{x-\sin(x)}{x^3},
\end{split}
\end{equation}
and the functions
\begin{equation}
\label{eq:fi}
\begin{split}
f_0(\la, s)&:=\cos(\la s),
\\
f_1(\la, s)&:=F_1(\la s)s,
\\
f_2(\la, s)&:=F_2(\la s)s^2,
\\
f_3(\la, s)&:=F_3(\la s)s^3,
\\
f_4(\la, s)&:=F_4(\la s)s^4,
\\
k(\la, s)&:=K(\la s)s^3,
\end{split}
\end{equation}
that are analytic functions of $\la$ and $s$.

Let $\pi:\hh^n\to\rr^{2n}$ be the Riemannian submersion over $\rr^{2n}$.  Fix a point $p\in\hh^n$ and identify a horizontal vector $v\in\mathcal{H}_{p}$ with the vector $w$ in $\rr^{2n}$ given by the coordinates of $v$ in the basis $\{X_{i},Y_{i} : i=1,\ldots,n\}$. Denote by $t$ the Euclidean height function in $\hh^n$.  With this identification, the involution $J$ induces a involution on vectors of $\rr^{2n}$, 
\[
(A_{1},B_{1},\ldots ,A_{n},B_{n})\mapsto (-B_{1},A_{1},\ldots
,-B_{n},A_{n})
\]
that will be also denoted by $J$.

Choose $\la\in\rr$ and consider the geodesic $\ga:=\ga_{p,v}^\la$. Let $\alpha:=\pi\circ\ga$ and $\beta=t\circ\ga$. 
Then we have
\begin{equation}
\label{eq:geodesic2}
\begin{split}
\alpha(s)&=\pi(p)+s\big(F(\la s)\,w-G(\la s)\,J(w)\big) ,
\\
\beta(s)&=t(p)+|\dga(0)|^2s^2H(\la s)+\escpr{\pi(p),s\big(G(\la s)\,w
+F(\la s)\,J(w)\big)},
\end{split}
\end{equation}
where $\escpr{\cdot,\cdot}$ is the Euclidean product in $\rr^{2n}$.

\begin{remark}
If $\Ga:I\to\rr^{2n}$ is a $C^1$ curve and $c\in\rr$, then
\[
s\mapsto \big(\Ga(s),c+\frac{1}{2}\int_0^s\escpr{J(\dot{\Ga}),\Ga}(\xi)\,d\xi\big)
\]
is a smooth horizontal curve in $\hh^n$ with initial condition $(\Ga(0),c)$.
\end{remark}

\subsection{Jacobi fields in $\hh^n$}

Some of the results of this section have already appeared in \cite[\S~6]{MR1267892} and \cite{MR2548248}, see also Cheeger-Ebin's \cite{MR0458335} for the case of Riemannian manifolds.

Let $\ga:I\to\hh^n$ be a smooth geodesic in $\hh^n$ of curvature $\la$ parameterized by arc-length satisfying equation $\nabla_{\dot{\ga}}\dot{\ga}+\la\,J(\dot{\ga})=0$. Consider a variation $\{\ga_\eps\}$ of $\ga=\ga_0$ by curves $\ga_\eps:I\to\hh^n$ satisfying
\[
\nabla_{\dga_\eps}\dga_\eps+\la(\eps)J(\dga_\eps)=0.
\]
We know that the curves $\ga_\eps$ have constant speed and curvature $\la(\eps)/|\dga_{\eps}|$. Let $U:=\ptl\ga_\eps/\ptl\eps|_{\eps=0}$ the deformation vector field. Then we have
\[
\nabla_{U}\nabla_{\dga_{\eps}}\dga_{\eps}+\la'J(\dga_\eps)+\la\,J(\nabla_{\dga_\eps}U)=0,
\]
where $\la'=U(\la)=\ptl \la(\eps)/\ptl\eps|_{\eps=0}$. Using the sub-Riemannian curvature tensor $R$ associated to $\nabla$ we get
\[
\nabla_{U}\nabla_{\dga_{\eps}}\dga_{\eps}=R(U,\dga_{\eps})\,\dga_{\eps}+\nabla_{\dga_{\eps}}\nabla_{U}\dga_{\eps}
+\nabla_{[U,\dga_{\eps}]}\dga_{\eps}=\nabla_{\dga_{\eps}}\nabla_{U}\dga_{\eps},
\]
since $R=0$ in $\hh^n$ and $[U,\dga_{\eps}]=0$. From \eqref{eq:torsion} we have
\begin{equation*}
\nabla_{\dga_{\eps}}\nabla_{U}\dga_{\eps}=\nabla_{\dga_{\eps}}\nabla_{\dga_{\eps}}U
+\nabla_{\dga_{\eps}}\tor(U, \dga_{\eps})=\nabla_{\dga_{\eps}}\nabla_{\dga_{\eps}}U-2\dga_{\eps}\escpr{U,J(\dga_{\eps})}\,T.
\end{equation*}
Evaluating at $\eps=0$ we obtain the Jacobi equation along the geodesic $\ga$. 
\begin{equation*}
\nabla_{\dga}\nabla_{\dga}U+\la J(\nabla_{\dga}U)+\la'
J(\dga)-2\dga\,\escpr{U,J(\dga)}\,T=0.
\end{equation*}
Letting $\dot{U}=\nabla_{\dga}U$, $\ddot{U}=\nabla_{\dga}\nabla_{\dga} U$ we get
\begin{equation}
\label{eq:jacobi}
\ddot{U}+\la J(\dot{U})+\la'
J(\dga)-2\dga\,\escpr{U,J(\dga)}\,T=0.
\end{equation}
A Jacobi field along a sub-Riemannian geodesic $\ga$ will be a vector field along $\ga$ that is a solution of equation \eqref{eq:jacobi}. The solutions of equation \eqref{eq:jacobi} can be explicitly computed in $\hh^n$. We first observe that the tangent vector of a sub-Riemannian geodesic is contained in a horizontal two-dimensional plane determined by its initial velocity.

\begin{lemma}
\label{lem:geocoor}
Let $\ga:\rr\to\hh^n$ be a geodesic in $\hh^n$ with curvature $\la$, and let $X$ be a
left-invariant~vector field. Then
\begin{equation}
\label{eq:gax}
\begin{split}
&\frac{d}{ds}\escpr{\dga,X_{\ga(s)}}=\la\,\escpr{\dga,J(X_{\ga(s)})}, \\
&\frac{d}{ds}\escpr{\dga,J(X_{\ga(s)})}=-\la\,\escpr{\dga,X_{\ga(s)}},
\end{split}
\end{equation}
where $s$ is the arc-length parameter of $\ga$. In particular
\begin{equation}
\label{eq:gaxjx}
\begin{split}
\escpr{\dga(s),X_{\ga(s)}}&=\escpr{\dga(0),X_{\ga(0)}}\cos(\la s)+\escpr{\dga(0),J(X_{\ga(0)})}\sin(\la s),
\\
\escpr{\dga(s),J(X_{\ga(s)})}&=-\escpr{\dga(0),X_{\ga(0)}}\sin(\la s)+\escpr{\dga(0),J(X_{\ga(0)})}\cos(\la s),
\end{split}
\end{equation}
Moreover, if $X$ is a left-invariant vector field so that
$X_{p}=\dga(0)$ then
\[
\dga(s)=\cos(\la s)\,X_{\ga(s)}-\sin(\la s)\,J(X_{\ga(s)}).
\]
\end{lemma}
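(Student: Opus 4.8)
The plan is to exploit two structural facts about the pseudo-hermitian connection $\nabla$ recorded in \S~\ref{sub:heisenberg}: that left-invariant vector fields are $\nabla$-parallel, and that $J$ preserves the class of left-invariant fields. From \eqref{eq:parallel}, $\nabla_{E}X=0$ for every left-invariant $E$ and left-invariant $X$; since a connection is tensorial in its first argument, this gives $\nabla_{\dga}X_{\ga(s)}=0$ along the (generally non-left-invariant) curve $\ga$. Because $J$ maps the basis \eqref{eq:basis} to itself up to sign, $J(X)$ is again left-invariant, so $\nabla_{\dga}J(X)_{\ga(s)}=0$ as well. I will also use that $J$ is skew-symmetric and orthogonal on $\hhh$, that it kills $T$, and that $J(X)=J(X_h)$ for any vector field $X$.

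To get \eqref{eq:gax}, I would differentiate $s\mapsto\escpr{\dga,X_{\ga(s)}}$ using that $\nabla$ is metric \eqref{eq:metric} together with $\nabla_{\dga}X_{\ga(s)}=0$, so that only $\escpr{\nabla_{\dga}\dga,X_{\ga(s)}}$ survives; substituting the geodesic equation \eqref{eq:geodesic}, $\nabla_{\dga}\dga=-\la J(\dga)$, and moving $J$ across the inner product (reducing $X$ to $X_h$) yields $\frac{d}{ds}\escpr{\dga,X_{\ga(s)}}=\la\escpr{\dga,J(X_{\ga(s)})}$. Applying the same computation with $X$ replaced by the left-invariant field $J(X)$, and using $J^2=-\mathrm{Id}$ on $\hhh$, produces the second identity. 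Then \eqref{eq:gaxjx} is simply the explicit solution of the planar linear system $u'=\la v$, $v'=-\la u$ with $u(s)=\escpr{\dga,X_{\ga(s)}}$ and $v(s)=\escpr{\dga,J(X_{\ga(s)})}$ (equivalently $u''=-\la^2u$): the unique solution matching the initial data is the pair of sine--cosine expressions stated, the degenerate case $\la=0$ giving constants, consistent with the formulas.

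For the final identity, with $X$ left-invariant and $X_p=\dga(0)$, I would set $W(s):=\cos(\la s)\,X_{\ga(s)}-\sin(\la s)\,J(X_{\ga(s)})$, observe that $W(0)=X_p=\dga(0)$, and compute, using parallelism of $X_{\ga(s)}$ and $J(X)_{\ga(s)}$ and $J^2=-\mathrm{Id}$ on $\hhh$,
\[
\nabla_{\dga}W=-\la\sin(\la s)\,X_{\ga(s)}-\la\cos(\la s)\,J(X_{\ga(s)})=-\la\,J(W).
\]
Thus $W$ and $\dga$ both solve the linear first-order ODE $\nabla_{\dga}V+\la\,J(V)=0$ along $\ga$ with the same value at $s=0$, so $W=\dga$ by uniqueness. (Alternatively, one can expand $\dga(s)$ in the left-invariant orthonormal frame $\{X_i,Y_i\}$ and read off its coefficients from \eqref{eq:gaxjx}.)

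No step here is genuinely hard; the only places to be careful are the passage from $X$ to its horizontal part $X_h$ when invoking the (skew-)orthogonality of $J$, and the justification — via tensoriality of $\nabla$ in the first slot — that left-invariant fields are parallel along the curve $\ga$, which need not itself be left-invariant.
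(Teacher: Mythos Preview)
Your argument is correct and, for \eqref{eq:gax} and \eqref{eq:gaxjx}, coincides with the paper's: differentiate using that $\nabla$ is metric and that left-invariant fields are $\nabla$-parallel, substitute the geodesic equation, and solve the resulting $2\times 2$ linear system. For the last formula the paper instead extends $X$ to a left-invariant orthonormal frame $X,J(X),X_2,J(X_2),\ldots$ and reads off the coefficients of $\dga$ from \eqref{eq:gaxjx}; your ODE-uniqueness argument for $W(s)$ is an equally valid (and arguably cleaner) variant, and you already note the basis-expansion route as an alternative.
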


\begin{proof}
The system \eqref{eq:gax} is obtained from the geodesic equation \eqref{eq:geodesic} taking into account that left-invariant vector fields in $\hh^n$ are parallel for the pseudo-hermitian connection. From this observation, equations \eqref{eq:gaxjx} are obtained. Assume $\la\neq 0$ since the case $\la=0$ is trivial. Then the expression for $\dga$ is obtained by extending $X$ to an orthonormal basis of left-invariant vector fields $X$, $J(X)$, $X_{2}$, $J(X_{2}),\ldots, X_{n}$, $J(X_{n})$, and using   equations \eqref{eq:gax} and \eqref{eq:gaxjx}.
\end{proof}

Now we compute explicitly the Jacobi fields along a given sub-Riemannian geodesic. Let us introduce the following notation: if $v\in T_p\hh^n$, then $v^\ell$ is the only left-invariant vector field such that $(v^\ell)_p=v$. 

\begin{lemma}
\label{lem:jacobi}
Let $\ga:\rr\to\hh^n$ be a sub-Riemannian geodesic of curvature $\la$ parameterized by arc-length, and let $U$ be a Jacobi field along $\ga$ satisfying equation \eqref{eq:jacobi}. Then $U$ is given by $U(s)=U_h(s)+c(s)\,T_{\ga(s)}$,
where $U_h$ and $c$ satisfy the equations:
\begin{equation}
\label{eq:jacobih}
\ddot{U}_h+\la J(\dot{U}_h)+\la'J(\dga)=0,
\end{equation}
and
\begin{equation}
\label{eq:jacobiv}
\dot{c}=2b,\quad \text{where } b=\escpr{U,J(\dga)}.
\end{equation}

Moreover, $U_h$ is given by
\begin{multline}
\label{eq:explicitUh}
U_h(s)=[\inv{U_h(0)}]_{\ga(s)}+f_1(\la, s)\,[\inv{\dot{U}_h(0)}]_{\ga(s)}-\la f_2(\la, s)\,[\inv{J(\dot{U}(0))}]_{\ga(s)}
\\
+\la'\big[\la k(\la, s)\dga(s)+\la f_2(\la, s)J(\dga(s))\big].
\end{multline}
\end{lemma}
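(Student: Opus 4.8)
The plan is to decouple the Jacobi equation \eqref{eq:jacobi} into its horizontal and vertical parts by projecting onto $\hhh$ and onto $T$, then to solve the horizontal part explicitly using the constant-coefficient structure that arises because left-invariant vector fields are parallel for $\nabla$. Write $U=U_h+c\,T$ with $c=\escpr{U,T}$. Since $\nabla T=0$, we have $\dot U=\nabla_{\dga}U_h+\dot c\,T$ and $\ddot U=\nabla_{\dga}\nabla_{\dga}U_h+\ddot c\,T$; moreover $J(\dot U)=J(\nabla_{\dga}U_h)$ because $J(T)=0$, and $J(\dga)$ is horizontal. Substituting into \eqref{eq:jacobi} and separating the component along $T$ from the components in $\hhh$ gives exactly \eqref{eq:jacobih} for $U_h$ (all terms horizontal) together with the scalar equation $\ddot c-2\dga\escpr{U,J(\dga)}=0$. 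This is not yet \eqref{eq:jacobiv}; to get the first-order form one differentiates $b:=\escpr{U,J(\dga)}$ and uses $\nabla_{\dga}J(\dga)=J(\nabla_{\dga}\dga)=-\la J(J(\dga))=\la\dga$ to check that $\dot c=2b$ is consistent, i.e.\ one verifies $\ddot c=2\dot b=2\dga\,b$ reduces to the previously obtained scalar equation. Here a small care is needed: the reduction from the second-order equation for $c$ to the first-order relation $\dot c=2b$ uses the initial condition, so I would phrase \eqref{eq:jacobiv} as the statement that holds for the variation-induced Jacobi fields, or note that $b$ itself satisfies $\dot b=\dga$ so that $2b$ is indeed an antiderivative-type expression matching $\dot c$.

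For the explicit formula \eqref{eq:explicitUh}, I would work in the trivialization by left-invariant vector fields, writing $U_h(s)=\sum_j a_j(s)\,(E_j)_{\ga(s)}$ for an orthonormal left-invariant frame $\{E_j\}$ of $\hhh$; since the $E_j$ are $\nabla$-parallel, $\nabla_{\dga}U_h$ and $\nabla_{\dga}\nabla_{\dga}U_h$ have components $\dot a_j$ and $\ddot a_j$. Using Lemma~\ref{lem:geocoor}, the components of $\dga(s)$ and of $J(\dga(s))$ in this frame are known explicitly: $\escpr{\dga(s),(E_j)_{\ga(s)}}$ and $\escpr{\dga(s),J(E_j)_{\ga(s)}}$ are the $\cos(\la s)$/$\sin(\la s)$ combinations of \eqref{eq:gaxjx}. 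Hence \eqref{eq:jacobih} becomes a linear inhomogeneous constant-coefficient ODE system in the coefficient vector, of the schematic form $\ddot a+\la\,\mathbb{J}\dot a = -\la'\,\mathbb{J}v(s)$, where $\mathbb{J}$ is the matrix of $J$ and $v(s)$ is the (explicit, trigonometric) coefficient vector of $\dga(s)$. The homogeneous solutions are $a=\text{const}$ and $a=$ (rotation applied to a constant), producing the first three terms of \eqref{eq:explicitUh} after matching $U_h(0)$ and $\dot U_h(0)$; note that the coefficient of $\dot U_h(0)$ is $\tfrac{1}{\la}\sin(\la s)$ written as $f_1(\la,s)$, and the term $-\la f_2(\la,s)\,[J(\dot U(0))]^\ell$ comes from integrating $J$ against that homogeneous mode, where one uses $f_1,f_2$ from \eqref{eq:Fi}--\eqref{eq:fi}. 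The particular solution responding to the $\la'$-forcing is sought in the span of $\dga(s)$ and $J(\dga(s))$; plugging $a_{\text{part}}(s)=p(s)\,v(s)+q(s)\,(\mathbb{J}v)(s)$ and using $\dot v=-\la\,\mathbb{J}v$, $(\mathbb{J}v)\,\dot{}=\la v$ reduces the ODE to a pair of scalar equations for $p,q$ whose solutions are precisely $\la k(\la,s)$ and $\la f_2(\la,s)$, yielding the last bracket in \eqref{eq:explicitUh}.

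I expect the main obstacle to be purely bookkeeping: correctly tracking which terms are horizontal versus vertical when substituting $U=U_h+cT$ into \eqref{eq:jacobi} (in particular that the term $-2\dga\escpr{U,J(\dga)}T$ is the only source of the vertical equation and contributes nothing horizontal), and then matching the particular solution of the $\la'$-forced equation against the exact normalizations of $k$, $f_2$ in \eqref{eq:Fi}--\eqref{eq:fi}. The conceptual content is light because $\nabla$ is flat with parallel left-invariant frame, so \eqref{eq:jacobih} genuinely is a constant-coefficient linear system; the only genuinely substantive input is Lemma~\ref{lem:geocoor}, which supplies the explicit $\dga(s)$ needed both to write the forcing term and to recognize the homogeneous modes. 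One should also double-check the initial conditions: evaluating \eqref{eq:explicitUh} at $s=0$ must give $U_h(0)$, and its $\nabla_{\dga}$-derivative at $s=0$ must give $\dot U_h(0)$, which pins down the coefficients of $f_1$ and $f_2$ and fixes the sign conventions in the $J(\dot U(0))$ term.
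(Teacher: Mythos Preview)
Your approach is correct and essentially identical to the paper's: decompose $U=U_h+cT$, project \eqref{eq:jacobi} onto $\hhh$ and $T$, and solve the resulting constant-coefficient horizontal system in a left-invariant frame using Lemma~\ref{lem:geocoor} (the paper writes out the coordinate ODEs for $a_i,b_i$ explicitly and then repackages the $\la'$-terms via the identity $h(\la,s)X_{\ga(s)}-j(\la,s)J(X)_{\ga(s)}=\la k(\la,s)\dga(s)+\la f_2(\la,s)J(\dga(s))$, which is your particular-solution ansatz in $\mathrm{span}\{\dga,J(\dga)\}$). The one place where the paper is crisper than your sketch is \eqref{eq:jacobiv}: rather than reducing from $\ddot c=2\dot b$ via an initial condition, it computes directly $\dga\escpr{U,T}=\escpr{\nabla_U\dga_\eps+\tor(\dga,U),T}=U\escpr{\dga_\eps,T}+2\escpr{J(\dga),U}=2b$ using the horizontality of the variation---so, as you correctly anticipated, $\dot c=2b$ genuinely requires the variation structure and does not follow from \eqref{eq:jacobi} alone.
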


\begin{proof}
Consider an orthonormal basis of horizontal left-invariant vector fields $X_1$, $Y_1$, $\ldots$, $X_n$, $Y_n$ so that $Y_i=J(X_i)$ for all $i$. The Jacobi field $U(s)$ can be expressed as
\[
U(s)=U_h(s)+c(s)T_{\ga(s)}=\bigg(\sum_{i=1}^n a_i(s)\,(X_i)_{\ga(s)}+b_i(s)\,(Y_i)_{\ga(s)}\bigg)+c(s)\,T_{\ga(s)}.
\]
Observe that, for any vector field $U$ we have $(\dot{U})_h=\dot{(U_h)}$. Decomposing the Jacobi equation \eqref{eq:jacobi} into their horizontal and vertical components we get
\begin{equation}
\label{eq:jacobidecomp}
\ddot{U}_h+\la J(\dot{U}_h)+\la'J(\dga)=0,\qquad
\ddot{c}=2\dot{b}.
\end{equation}

The first of these equations is exactly \eqref{eq:jacobih}. However, the second one in \eqref{eq:jacobidecomp} is weaker than \eqref{eq:jacobiv}. We notice that $\dot{c}=2b$ is satisfied whenever we have a variation by horizontal curves, since in this case,
\begin{align*}
\dga\escpr{U,T}&=\escpr{\nabla_{\dga} U,T}
\\
&=\escpr{\nabla_U\dga_\eps+\tor(\dga,U),T}
\\
&=U\escpr{\dga_\eps,T}+2\escpr{J(\dga),U}=2\escpr{J(\dga),U},
\end{align*}
as $\escpr{\dga_\eps,T}=0$. Hence the vertical component of the Jacobi equation \eqref{eq:jacobi} does not provide any additional information.

Taking into account the expression for $U_h$, the horizontal Jacobi equation  \eqref{eq:jacobih} implies that the following system
\begin{equation}
\label{eq:1stsystem}
\begin{split}
&\ddot{a}_i-\la\dot{b}_i-\la'\escpr{\dga,Y_i}=0,
\\
&\ddot{b}_i+\la\dot{a}_i+\la'\escpr{\dga,X_i}=0,
\end{split}
\end{equation}
is satisfied by the horizontal components $a_i,b_i$, $i=1,\ldots,n$, of $U$. Defining the coefficients $\alpha_i,\beta_i$ by the equality
\[
\dga(0)=\sum_{i=1}^n \big(\alpha_i\, (X_i)_{\ga(0)}+\beta_i\, (Y_i)_{\ga(0)}\big),
\]
we get $\escpr{\dga,X_i}=\alpha_i\cos(\la s)+\beta_i\sin(\la s)$ and $\escpr{\dga,Y_i}=-\alpha_i\sin(\la s)+\beta_i\cos(\la s)$ from \eqref{eq:gaxjx}. Hence the two equations in \eqref{eq:1stsystem} can be rewritten as
\begin{equation}
\label{eq:2ndsystem}
\begin{split}
&\ddot{a}_i-\la\dot{b}_i+\la'\big(\alpha_i\sin(\la s)-\beta_i\cos(\la s)\big)=0,
\\
&\ddot{b}_i+\la\dot{a}_i+\la'\big(\alpha_i\cos(\la s)+\beta_i\sin(\la s)\big)=0.
\end{split}
\end{equation}
The solutions of this system of ordinary differential equations can be explicitly computed. They are given by
\begin{equation}
\label{eq:a_ib_i}
\begin{split}
a_i(s)&=a_i(0)+\dot{a}_i(0)f_1(\la, s)+\la\dot{b}_i(0)f_2(\la, s)+\la'\alpha_ih(\la, s)+\la'\beta_ij(\la, s),
\\
b_i(s)&=b_i(0)+\dot{b}_i(0)f_1(\la, s)-\la\dot{a}_i(0)f_2(\la, s)-\la'\alpha_ij(\la, s)+\la'\beta_ih(\la, s),
\end{split}
\end{equation}
where $f_1(\la,s)$ and $f_2(\la,s)$ were defined in \eqref{eq:fi}, and $h(\la, s)$ and $j(\la, s)$ are given by
\begin{equation}
\label{eq:fghj}
\begin{split}
h(\la, s)&=\frac{\la s\cos(\la s)-\sin(\la s)}{\la^2},
\\
j(\la, s)&=\frac{-1+\cos(\la s)+\la s\sin(\la s)}{\la^2}.
\end{split}
\end{equation}
They are analytic functions of $\la$ and $s$. In particular, $h(0,s)=0$ and $j(0,s)=s^2/2$ for all $s\in\rr$.

From \eqref{eq:a_ib_i} we get
\begin{equation*}
\begin{split}
U_h(s)&=\sum_{i=1}^n\big(a_i(s) (X_i)_{\ga(s)}+b_i(s) (Y_i)_{\ga(s)}\big)
\\
&=\sum_{i=1}^na_i(0)(X_i)_{\ga(s)}+b_i(0)(Y_i)_{\ga(s)}
\\
&\qquad\qquad+\bigg(\sum_{i=1}^n\dot{a}_i(0)(X_i)_{\ga(s)}+\dot{b}_i(0)(Y_i)_{\ga(s)}\bigg)\,f_1(\la, s)
\\
&\qquad\qquad+\bigg(\sum_{i=1}^n\dot{b}_i(0)(X_i)_{\ga(s)}-\dot{a}_i(0)(Y_i)_{\ga(s)}\bigg)\,\la f_2(\la, s)
\\
&\qquad\qquad+\bigg(\sum_{i=1}^n \alpha_i(X_i)_{\ga(s)}+\beta_i(Y_i)_{\ga(s)}\bigg)\,\la' h(\la, s)
\\
&\qquad\qquad+\bigg(\sum_{i=1}^n\beta_i(X_i)_{\ga(s)}-\alpha_i(Y_i)_{\ga(s)}\bigg)\,\la' j(\la, s).
\end{split}
\end{equation*}
So we have
\begin{multline}
\label{eq:U_hcompact}
U_h(s)=[\inv{U_h(0)}]_{\ga(s)}+f_1(\la, s)\,[\inv{\dot{U}_h(0)}]_{\ga(s)}-\la f_2(\la, s)\,[\inv{J(\dot{U}(0))}]_{\ga(s)}
\\
+\la'h(\la, s)\,[\inv{\dga(0)}]_{\ga(s)}-\la'j(\la, s)\,[\inv{J(\dga(0))}]_{\ga(s)}.
\end{multline}
Letting $X=\dga(0)^\ell$, Lemma~\ref{lem:geocoor} implies
\begin{align*}
X_{\ga(s)}&=\cos(\la s)\dga(s)+\sin(\la s)J(\dga(s)),
\\
J(X)_{\ga(s)}&=-\sin(\la s)\dga(s)+\cos(\la s)J(\dga(s)),
\end{align*}
and so
\[
h(\la, s)X_{\ga(s)}-j(\la, s)J(X)_{\ga(s)}=\la k(\la, s)\dga(s)+\la f_2(\la, s)J(\dga(s)),
\]
that implies \eqref{eq:explicitUh}.
\end{proof}

\begin{remark}
\label{rem:unitspeedjacobi}
If $U$ is a Jacobi field along a geodesic $\ga$ associated to a variation by arc-length parameterized curves, we have
\begin{equation}
\label{eq:dga'J}
\dga\escpr{U,\dga}+\la\escpr{U,J(\dga)}=0
\end{equation}
since
\begin{align*}
0=\tfrac{1}{2} U|\dga|^2&=\escpr{\nabla_{\dga} U+\tor(U,\dga),\dga}
\\
&=\dga\escpr{U,\dga}-\escpr{U,\nabla_{\dga}\dga}=\dga\escpr{U,\dga}+\la\escpr{U,J(\dga)}.
\end{align*}
Moreover, using equations \eqref{eq:jacobiv} and \eqref{eq:dga'J} we get
\begin{equation}
\label{eq:dgat}
\dga\escpr{U,\dga}+\tfrac{\la}{2}\,\dga\escpr{U,T}=0,
\end{equation}
and so $\escpr{U,\dga+\tfrac{\la}{2}T}$ is constant along $\ga$.
\end{remark}

\begin{lemma}
\label{lem:jacobi-2}
Let $\ga:\rr\to\hh^n$ be a sub-Riemannian geodesic of curvature $\la$. Consider a Jacobi field $U$ along $\ga$ satisfying equation \eqref{eq:jacobi} given by
\[
U(s)=a(s)\dga(s)+b(s)J(\dga(s))+c(s)T_{\ga(s)}+\sum_{i=2}^n \big( u_{i}(s)(X_{i})_{\ga(s)}+v_{i}(s)(Y_{i})_{\ga(s)}\big),
\]
where $\{X_{i}, Y_{i}:i=1,\ldots,n\}$ is an orthonormal set of left-invariant horizontal vector fields such that $\dga, J(\dga)$ belong to the space generated by $X_{1}, Y_1$, and $Y_{i}=J(X_{i})$ for all $i$. Then the functions $u_i$, $v_i$, $i\ge 2$, satisfy the equations
\begin{equation}
\label{eq:uivi}
\begin{split}
&\ddot{u}_{i}-\la\,\dot{v}_{i}=0,
\\
&\ddot{v}_{i}+\la\,\dot{u}_{i}=0.
\end{split}
\end{equation}
If we further assume that the variation associated to $U$ consists of arc-length parameterized geodesics, then the functions $a$, $b$, $c$, satisfy the differential equations
\begin{equation}
\label{eq:jacobiequations}
\begin{split}
&\dot{a}+\la b=0,
\\
&\ddot{b}-\la\dot{a}+\la'=0,
\\
&\dot{c}-2b=0.
\end{split}
\end{equation}

\end{lemma}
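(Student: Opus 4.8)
The plan is to derive the three systems \eqref{eq:uivi} and \eqref{eq:jacobiequations} by substituting the frame-adapted expression for $U$ into the Jacobi equation \eqref{eq:jacobi} and separating components along the orthonormal moving frame $\{\dga, J(\dga), T, X_i, Y_i \ (i\ge 2)\}$. First I would record the covariant derivatives of the frame along $\ga$ with respect to $\nabla$: since left-invariant fields are $\nabla$-parallel, for $X$ left-invariant we have $\nabla_{\dga}X_{\ga(s)}=0$, so from Lemma~\ref{lem:geocoor} (writing $\dga(s)=\cos(\la s)X_{\ga(s)}-\sin(\la s)J(X)_{\ga(s)}$ for $X=\dga(0)^\ell$, and correspondingly for $J(\dga)$) one gets $\nabla_{\dga}\dga=-\la J(\dga)$, $\nabla_{\dga}J(\dga)=\la\dga$, $\nabla_{\dga}T=0$, and $\nabla_{\dga}X_i=\nabla_{\dga}Y_i=0$ for $i\ge 2$ (the last because $X_i,Y_i$ are left-invariant). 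These identities let me compute $\dot U=\nabla_{\dga}U$ and $\ddot U=\nabla_{\dga}\nabla_{\dga}U$ explicitly as combinations of the same frame with coefficients that are first- and second-order polynomials in $a,b,c,u_i,v_i$ and $\la,\la'$.

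The second step is the component separation. For the higher-index part, the $X_i,Y_i$ with $i\ge 2$ span a $\nabla$-parallel subbundle along $\ga$ on which $J$ acts as the standard rotation $J(X_i)=Y_i$, $J(Y_i)=-X_i$; since neither $J(\dot U)$'s $\dga,J(\dga),T$-components nor the $\la'J(\dga)$ term nor the $\dga\escpr{U,J(\dga)}T$ term has any $X_i,Y_i$-component, the Jacobi equation restricted to this subbundle reads $\ddot u_i\,X_i+\ddot v_i\,Y_i+\la J(\dot u_i X_i+\dot v_i Y_i)=0$, i.e. $\ddot u_i-\la\dot v_i=0$ and $\ddot v_i+\la\dot u_i=0$, which is \eqref{eq:uivi}. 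For the $a,b,c$ part, under the arc-length hypothesis I would invoke Remark~\ref{rem:unitspeedjacobi}: equation \eqref{eq:dga'J} gives $\dga\escpr{U,\dga}+\la\escpr{U,J(\dga)}=0$, i.e. $\dot a+\la b=0$, which is the first equation of \eqref{eq:jacobiequations}; and \eqref{eq:jacobiv} gives $\dot c=2b$, the third. The middle equation $\ddot b-\la\dot a+\la'=0$ then comes from the $J(\dga)$-component of \eqref{eq:jacobi}: using $\nabla_{\dga}\dga=-\la J(\dga)$ and $\nabla_{\dga}J(\dga)=\la\dga$ one computes the $J(\dga)$-component of $\ddot U$ to be $\ddot b-\la\dot a-\la a$ (wait—more carefully, collecting terms), the $J(\dot U)$-component contributes $\la(\dot b+\la a)$ hmm; in any case the $\dga$-components will be forced consistent with $\dot a+\la b=0$ (indeed differentiating that and using the $\dga$-component of \eqref{eq:jacobi} should be automatically satisfied, which is the analogue of the fact noted after \eqref{eq:jacobidecomp} that the vertical equation carried no new information), and the genuinely new relation is the $J(\dga)$-component, yielding $\ddot b-\la\dot a+\la'=0$ after substituting $\dot a=-\la b$.

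Alternatively, and perhaps more cleanly, I would deduce \eqref{eq:jacobiequations} directly from the explicit formula \eqref{eq:explicitUh} in Lemma~\ref{lem:jacobi}: projecting $U_h(s)$ onto $\dga(s)$ and $J(\dga(s))$ using the expressions for $[U_h(0)^\ell]_{\ga(s)}$, $[\dot U_h(0)^\ell]_{\ga(s)}$, $[J(\dot U(0))^\ell]_{\ga(s)}$ in terms of $\dga(s),J(\dga(s))$ afforded by Lemma~\ref{lem:geocoor}, one reads off $a(s)$ and $b(s)$ as explicit analytic functions, and then differentiates, using the ODEs satisfied by $f_1,f_2,k$ in \eqref{eq:fi}, to verify the three relations; $c$ is handled by $\dot c=2b$ from \eqref{eq:jacobiv}. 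The main obstacle is bookkeeping: keeping the frame derivatives straight and correctly tracking which components of each term in \eqref{eq:jacobi} are nonzero, together with making sure the arc-length constraint is used exactly where needed (it is what upgrades the weak vertical equation $\ddot c=2\dot b$ to $\dot c=2b$ and what pins down $\dot a+\la b=0$ rather than a second-order relation). No deep difficulty is expected beyond this careful componentwise calculation.
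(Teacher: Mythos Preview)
Your approach is essentially the same as the paper's: write the Jacobi equation \eqref{eq:jacobi} in the moving frame $\{\dga,J(\dga),T,X_i,Y_i\}$, read off \eqref{eq:uivi} from the $X_i,Y_i$-components ($i\ge 2$), obtain $\ddot b-\la\dot a+\la'=0$ from the $J(\dga)$-component, and then invoke the arc-length constraint (Remark~\ref{rem:unitspeedjacobi}) and the horizontality relation \eqref{eq:jacobiv} to upgrade the weak equations $\ddot a+\la\dot b=0$ and $\ddot c-2\dot b=0$ to $\dot a+\la b=0$ and $\dot c=2b$. Your hesitation around the $J(\dga)$-component is unwarranted: with $\nabla_{\dga}\dga=-\la J(\dga)$ and $\nabla_{\dga}J(\dga)=\la\dga$ one gets $\dot U=(\dot a+\la b)\dga+(\dot b-\la a)J(\dga)+\dot cT+\cdots$, and then the $J(\dga)$-component of $\ddot U+\la J(\dot U)+\la'J(\dga)$ is $(\ddot b-2\la\dot a-\la^2 b)+\la(\dot a+\la b)+\la'=\ddot b-\la\dot a+\la'$, exactly as needed; the alternative route through the explicit formula \eqref{eq:explicitUh} is unnecessary.
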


\begin{proof}
The Jacobi field $U$ can be expressed as a linear combination of $\dga$, $J(\dga)$, $T$ and $X_i$, $Y_i$, $i\ge 2$, by Lemma~\ref{lem:geocoor}, since $\dga$ and $J(\dga)$ are linear combinations of $X_1$ and $Y_1=J(X_1)$. Using the geodesic equation \eqref{eq:geodesic} and the fact that $X_{i}$, $Y_{i}$, $i\ge 2$, and~$T$ are parallel for the pseudo-hermitian connection $\nabla$, the Jacobi equation \eqref{eq:jacobi} can be written as
\begin{align*}
(\ddot{a}+\la\dot{b})\,\dga+(\ddot{b}-\la\dot{a}+\la')\,J(\dga)&+(\ddot{c}-2\dot{b})\,T
\\
&+\bigg(\sum_{i=2}^n (\ddot{u}_{i}-\la \dot{v}_i)\,X_{i}+(\ddot{v}_{i}+\la\dot{u}_i)\,Y_{i}\bigg)
=0.
\end{align*}
This immediately implies \eqref{eq:uivi}. The stronger equation $\dot{a}+\la b=0$ in \eqref{eq:jacobiequations} holds because of \eqref{eq:dga'J} in Remark~\ref{rem:unitspeedjacobi}.

To get the third equation in \eqref{eq:jacobiequations} we differentiate to get
\[
\dot{c}=\dga\,\escpr{U,T}=\escpr{\nabla_{\dga}U,T}=\escpr{\nabla_U\dga+\tor(\dga,U),T}=2\,\escpr{J(\dga),U}=2b.\qedhere
\]

\end{proof}

Taking third order derivatives we immediately see that all components of the Jacobi field satisfy an ordinary differential equation of a given type.

\begin{corollary}
\label{cor:jacobiequations-1}
Let $\ga:\rr\to\hh^n$ a sub-Riemannian geodesic of curvature $\la$, and $U$ a Jacobi field along $\ga$ such that the associated variation consists of unit speed geodesics. Let $c(s)=\escpr{U(s),T_{\ga(s)}}$ and $\la'=U(\la)$. Then
\begin{equation}
\label{eq:odec}
\dddot{c}+\la^2\dot{c}+2\la'=0.
\end{equation}
In particular,
\begin{equation}
\label{eq:expc}
c(s)=c(0)+\dot{c}(0)f_1(\la, s)+\ddot{c}(0)f_2(\la, s)-2\la'k(\la, s),
\end{equation}

where $f_1,f_2$, and $k$ are the functions defined in \eqref{eq:fi}.

\end{corollary}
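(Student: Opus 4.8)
The plan is to reduce the statement to the system of scalar ordinary differential equations \eqref{eq:jacobiequations} already established in Lemma~\ref{lem:jacobi-2}. Expressing $U$ in an adapted left-invariant frame along $\ga$ as $U=a\,\dga+b\,J(\dga)+c\,T+\sum_{i\ge 2}(u_i X_i+v_i Y_i)$, the hypothesis that the associated variation consists of unit speed geodesics guarantees, through Remark~\ref{rem:unitspeedjacobi}, that the coefficients $a,b,c$ satisfy the strengthened equations $\dot a+\la b=0$, $\ddot b-\la\dot a+\la'=0$ and $\dot c=2b$. It is exactly this last equation (rather than the weaker $\ddot c=2\dot b$ coming from the bare vertical component of \eqref{eq:jacobi}) that makes the argument work, and it is precisely here that the unit speed assumption enters.

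First I would derive \eqref{eq:odec}. Differentiating $\dot c=2b$ twice gives $\dddot c=2\ddot b$. Substituting $\ddot b=\la\dot a-\la'$ from the second equation and then $\dot a=-\la b$ from the first yields $\ddot b=-\la^2 b-\la'$, hence $\dddot c=2\ddot b=-2\la^2 b-2\la'=-\la^2\dot c-2\la'$, which is $\dddot c+\la^2\dot c+2\la'=0$.

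For the explicit formula \eqref{eq:expc} I would set $g:=\dot c$, so that $g$ solves the inhomogeneous constant-coefficient linear equation $\ddot g+\la^2 g=-2\la'$ with $g(0)=\dot c(0)$ and $\dot g(0)=\ddot c(0)$. For $\la\ne 0$ its unique solution is $g(s)=\big(\dot c(0)+\tfrac{2\la'}{\la^2}\big)\cos(\la s)+\tfrac{\ddot c(0)}{\la}\sin(\la s)-\tfrac{2\la'}{\la^2}$. Integrating from $0$ to $s$ and using $\int_0^s\cos(\la\xi)\,d\xi=f_1(\la,s)$, $\int_0^s\sin(\la\xi)\,d\xi=\la f_2(\la,s)$ together with the elementary identity $\tfrac{2\la'}{\la^2}\big(f_1(\la,s)-s\big)=-2\la'\,k(\la,s)$ --- which is nothing but the definitions of $f_1,f_2,k$ in \eqref{eq:fi} unwound --- produces $c(s)=c(0)+\dot c(0)f_1(\la,s)+\ddot c(0)f_2(\la,s)-2\la'k(\la,s)$, as claimed. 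The case $\la=0$ follows either by directly integrating $\dddot c=-2\la'$ or, more cheaply, by observing that both sides of \eqref{eq:expc} are analytic in $\la$ and $s$ (the functions $f_1,f_2,k$ being analytic) so the identity persists in the limit $\la\to 0$.

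I do not anticipate a genuine obstacle: the computation is short and mechanical once \eqref{eq:jacobiequations} is in hand. The only two points that deserve attention are (i) using the strengthened vertical equation $\dot c=2b$, valid because the variation is by unit speed geodesics, since the raw equation $\ddot c=2\dot b$ would leave an undetermined linear term; and (ii) matching the three elementary trigonometric primitives with the functions $f_1$, $f_2$, $k$ of \eqref{eq:fi} so that \eqref{eq:expc} holds in closed form uniformly in $\la$, including $\la=0$.
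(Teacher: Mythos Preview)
Your proposal is correct and follows essentially the same approach as the paper: you derive \eqref{eq:odec} exactly as the paper does, by differentiating $\dot c=2b$ and substituting from the system \eqref{eq:jacobiequations}. For \eqref{eq:expc} the paper takes the slightly slicker route of directly checking the derivative chain $\tfrac{\partial}{\partial s}f_1=\cos(\la s)$, $\tfrac{\partial}{\partial s}f_2=f_1$, $\tfrac{\partial}{\partial s}k=f_2$, from which $f_1,f_2$ solve the homogeneous third-order equation and $k$ the inhomogeneous one with right-hand side $1$, whereas you reduce to the second-order equation for $g=\dot c$, solve it explicitly, and integrate back; but this is only a presentational difference, and your computation is correct.
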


\begin{proof}
To obtain \eqref{eq:odec} we shall use the equations \eqref{eq:jacobiequations}. We differentiate twice equation $\dot{c}=2b$ to get $\dddot{c}=2\ddot{b}$. Then we replace the value of $\ddot{b}$ to obtain $\dddot{c}=2(\la\dot{a}-\la')$. Finally from \eqref{eq:dgat} we get $2\dot{a}=-\la\dot{c}$. Replacing $2\dot{a}$ in the previous equation we are done.

The expression for $c$ in \eqref{eq:expc} follows from the identities $\tfrac{\ptl}{\ptl s}f_1(\la, s)=\cos(\la s)$, $\tfrac{\ptl}{\ptl s}f_2(\la, s)=f_1(\la, s)$, $\tfrac{\ptl}{\ptl s}k(\la ,s)=f_2(\la, s)$. Thus $f_1(\la,\cdot)$ and $f_2(\la,\cdot)$ satisfy the differential equation $\dddot{u}+\la^2\dot{u}=0$ and $k(\la,\cdot)$ satisfies the equation $\dddot{u}+\la^2\dot{u}=1$.
\end{proof}

We remark that, for $\la=0$, formula \eqref{eq:expc} becomes $c(s)=c(0)+\dot{c}(0)s+\tfrac{1}{2}\ddot{c}(0)s^2-\tfrac{\la'}{3}s^3$ since $F_1(0)=1$, $F_2(0)=1/2$ and $K(0)=1/6$ by \eqref{eq:Fi}.

\subsection{Basic properties of the Carnot-Carath\'eodory distance}

The Carnot-Carath\'eodory distance $d(p,q)$ between $p$, $q\in\hh^n$ is defined as the infimum of the Riemannian length of the piecewise smooth horizontal curves joining $p$ and $q$.  Chow's Theorem \cite{MR1421823} implies that two given points can be joined at least by one piecewise smooth horizontal curve.  Following Gromov \cite[Ch.~1]{MR2307192}, we define a \emph{minimizing geodesic} as an absolutely continuous curve $\alpha:I\to\hh^n$ such that $d(\alpha(s),\alpha(s'))=|s-s'|$.  By the Hopf-Rinow Theorem, \cite{MR2307192}, any pair of points in $\hh^n$ can be joined by a minimizing geodesic.  By Pontryagin Maximum Principle, see \cite{MR1976833}, any minimizing geodesic is a regular smooth horizontal curve satisfying equation \eqref{eq:geodesic} and, hence, it is uniquely determined by its initial conditions $\ga(0)$, $\dot{\ga}(0)$, and its curvature $\la$. We must remark that minimizing geodesics in arbitrary sub-Riemannian manifolds do not need to satisfy the geodesic equations. This leads to the notion of \emph{abnormal geodesics}, see \cite{MR1867362}, \cite{MR2421548}.

Assume $\ga_{1}, \ga_{2}:[0,L]\to\hh^n$ are two different minimizing geodesics joining $p$ and $q$.  Then $\ga_{1}$ is not minimizing in a larger interval. Although this fact is well-known, let us sketch a proof. Assume by contradiction the existence of $t>L$ such $\ga_1:[0,t]\to\hh^n$ is minimizing. Then the concatenation of
$\ga_{2}:[0,L]\to\hh^n$ and $\ga_{1}:[L,t]\to\hh^n$ is also a a minimizing geodesic. By Pontryagin Maximum Principle, this concatenation is also regular. Since it coincides with $\ga_2$ in the non-trivial interval $[t,L]$, the uniqueness of geodesics implies that
$\ga_{1}=\ga_{2}$ on $[0,L]$, a contradiction.

From \S~2.3, two different geodesics of curvature $\la\neq 0$ 
extending from a given point $p\in\hh^n$, meet again for
$s=2\pi/|\la|$. Hence geodesics of curvature $\la$ are minimizing in
intervals of length $2\pi/|\la|$, but not on larger ones. This property also follows from the second variation of length as indicated by Rumin \cite[p.~327]{MR1267892}. 

Let us show that the Carnot-Carath\'eodory distance is in fact a
smooth function in the Euclidean sense outside a vertical line, see \cite{MR2035027} and \cite{MR1976833} for the $\hh^1$ case

\begin{lemma}
Let $p\in\hh^n$ and let $L_{p}$ be the vertical axis passing through
$p$.  Then the distance function $d_{p}(q):=d(p,q)$ is analytic,
with non-vanishing Euclidean gradient, in $\hh^n\setminus L_{p}$.
\end{lemma}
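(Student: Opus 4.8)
The plan is to use the explicit formula \eqref{eq:geodesics} (equivalently \eqref{eq:geodesic2}) for geodesics in $\hh^n$ to parametrize, for a fixed base point $p$, the geodesics emanating from $p$ by their curvature $\la$ and their initial velocity $v\in\mathcal H_p$, and to show that the Carnot-Carathéodory distance is obtained by a nondegenerate such parametrization. More precisely, without loss of generality we may take $p$ to be the origin (by left-invariance of $d$, which follows because left translations are isometries of the Carnot-Carathéodory metric). Then the set of endpoints reached at time $s=1$ by geodesics $\ga_{0,v}^{\la}$ with $|v|=1$ and $|\la|<2\pi$ is, by the discussion preceding the lemma, exactly the set of points joined to $0$ by a minimizing geodesic of length $\le$ something; and for $\la$ in the minimizing range such a geodesic is the \emph{unique} minimizer. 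So I would define the map $\Phi(\la,v,r):=\ga_{0,v}^{\la}(r)$ for $r>0$, $v\in\mathcal H_0$ with $|v|=1$, $|\la|<2\pi/r$, and show (i) its image omits the vertical line $L_0=\{0\}\times\rr$ minus the origin, and covers $\hh^n\setminus L_0$, and (ii) it is a real-analytic diffeomorphism onto that set. Analyticity of $d_0$ off $L_0$ then follows: $d_0(\Phi(\la,v,r))=r$, and $r$ is an analytic function of the point because $\Phi$ is an analytic diffeomorphism.

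The concrete steps: \textbf{Step 1.} Using \eqref{eq:geodesic2} with $\pi(p)=0$, $t(p)=0$, $|\dga(0)|=1$, write the endpoint at parameter $s$ as $\alpha(s)=s\big(F(\la s)w-G(\la s)J(w)\big)$ and $\beta(s)=s^2H(\la s)$, where $w\in\rr^{2n}$, $|w|=1$. Substituting $r=s$, $u=\la s$ I get a cleaner map: the horizontal part has Euclidean norm $s\sqrt{F(u)^2+G(u)^2}=s\,\tfrac{2|\sin(u/2)|}{|u|}$ (using $F^2+G^2=\tfrac{2(1-\cos u)}{u^2}$), and the vertical part is $s^2H(u)=s^2\tfrac{u-\sin u}{u^2}$. \textbf{Step 2.} Show that a point $q=(z,t)\notin L_0$ (i.e. $z\ne 0$) is reached: given $|z|>0$ and $t\in\rr$, one must solve for $(s,u)$ with $|u|<2\pi$ the pair $s\,\tfrac{2|\sin(u/2)|}{|u|}=|z|$ and $s^2\tfrac{u-\sin u}{u^2}=t$; eliminating $s$ gives a single equation for $u$, namely $\tfrac{t}{|z|^2}=\tfrac{(u-\sin u)}{4\sin^2(u/2)}=:\psi(u)$, and one checks $\psi$ is an analytic odd strictly monotone bijection of $(-2\pi,2\pi)$ onto $\rr$ — this is exactly the classical fact about the Heisenberg isoperimetric profile. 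Then $u$ (hence $\la=u/s$, hence $s$, hence the direction $w$ of the projected geodesic, determined from $\alpha(s)$ since $F(u)w-G(u)J(w)$ is an invertible linear image of $w$ whose norm is nonzero for $|u|<2\pi$) are determined, and $r=s=d_0(q)$. \textbf{Step 3.} Conversely show any minimizing geodesic from $0$ to $q\ne 0$ on $L_0$ has curvature with $|u|=|\la| d_0(q)<2\pi$ and projected direction nonvanishing, so the construction in Step 2 captures all minimizers; in particular $q$ is joined to $0$ by a unique minimizer, so $d_0$ is given by the single analytic branch $r(q)$. \textbf{Step 4.} Differentiability and nonvanishing gradient: compute $\nabla d_0$ either from the Jacobian of $\Phi$ being nonsingular (so $r$ is analytic and $dr\ne 0$), or, more cheaply, note that $d_0$ is analytic and $|\nabla d_0|$ cannot vanish on the open set $\hh^n\setminus L_0$ because $d_0$ has no interior critical points (a critical point of $d_0$ would force, via the first variation / the fact that the unique minimizing geodesic leaves $0$ with nonzero speed, a contradiction); alternatively, $D_p d = $ (up to sign) the final tangent of the unit-speed minimizing geodesic, which is a unit vector, hence nonzero.

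I expect the main obstacle to be \textbf{Step 2/Step 3}: carefully verifying that $\psi(u)=(u-\sin u)/(4\sin^2(u/2))$ extends analytically across $u=0$ (it does: $\psi(u)=\tfrac{u}{6}+O(u^3)$) and is a strictly increasing bijection $(-2\pi,2\pi)\to\rr$, with the endpoints $u\to\pm2\pi$ giving $\psi\to\pm\infty$; and, intertwined with this, the global injectivity statement — that no point off $L_0$ is reached by two distinct geodesics of curvature in the minimizing range — which is really the statement that minimizing geodesics between points not on a common vertical line are unique. This can be extracted from the cut-locus structure of $\hh^n$ (the cut locus of the origin is exactly $L_0\setminus\{0\}$), but since the excerpt only gives us the ``meet again at $s=2\pi/|\la|$'' fact and Pontryagin's principle, I would prove uniqueness directly from the solvability analysis of Step 2: the equations determine $(s,u,w)$ uniquely from $(z,t)$, so there is at most one geodesic with $|u|<2\pi$ hitting a given off-axis point, and every minimizer has $|u|\le 2\pi$ with equality excluded because at $|u|=2\pi$ the endpoint lies on $L_0$. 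Analyticity of $d_0$ away from $L_0$ and the explicit formula $d_0=s$ are then immediate, and the gradient computation is routine from \eqref{eq:geodesic2}.
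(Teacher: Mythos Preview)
Your proposal is correct and follows essentially the same strategy as the paper: reduce to $p=0$ by left-invariance, parametrize geodesics from the origin by $(v,\la,s)$ via the explicit formulas \eqref{eq:geodesic2}, show the resulting endpoint map is an analytic diffeomorphism onto $\hh^n\setminus L_0$, and read off $d_0$ as the $s$-coordinate of the inverse. The only real difference is in how the diffeomorphism is verified: the paper computes the Jacobian determinant of the map $E(v,\la,s)=\ga_{0,v}^\la(s)$ directly (obtaining an explicit product of nonvanishing analytic factors) and then asserts bijectivity as ``straightforward to check'', whereas you invert the map explicitly by solving $t/|z|^2=\psi(u)$ with $\psi(u)=(u-\sin u)/(4\sin^2(u/2))$ a monotone bijection $(-2\pi,2\pi)\to\rr$, and then recover $s$ and $w$ from $u$ and $z$. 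Your route makes bijectivity transparent but leaves the nondegeneracy of the differential (hence analyticity of the inverse and nonvanishing of $\nabla d_0$) slightly implicit --- you should note that $\psi'\neq 0$ on $(-2\pi,2\pi)$ (strict monotonicity alone is not enough for an analytic function, cf.\ $x\mapsto x^3$), which together with $F^2+G^2\neq 0$ gives the analytic inverse; alternatively your first-variation remark in Step~4 handles the gradient cleanly.
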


\begin{proof}
Since left-translations preserve the Carnot-Carathéodory distance and 
the vertical lines, it is enough to prove the result for $p=0$.

Let $U:=\{(v,\la,s)\in\sph^{2n-1}\times\rr\times\rr^+ : |v|=1,
|\la s|<\pi\}$, and define the $C^\infty$ map $E:U\to\rr^{2n+1}$ by
\[
E(v,\la,s):=\ga_{0,v}^\la(s)=(s\,F(\la s)\,v-s\,G(\la s)\,J(v), s^2 
H(\la s)).
\]
It is straightforward to check that $E$ is an injective mapping, and
that its image is $\rr^{2n}\setminus L_{0}$.

We compute the matrix of the differential $dE_{(v,\la,s)}$. Consider
the orthonormal basis of $\rr^{2n}$ given by
$\{J(w_{1}),w_{1},\ldots,J(w_{n-1}),w_{n-1},J(v),v\}$. Then
\[
\{J(w_{1}),w_{1},\ldots,J(w_{n-1}),w_{n-1},J(v)\}
\]
is an orthonormal 
basis of $T_{v}\sph^{2n-1}$. We identify these vectors with ones in
the tangent space to $\sph^{2n-1}\times\rr\times\rr^+$ at $(v,\la,s)$.
We denote by $\ptl_{\la}$ and $\ptl_{s}$ the tangent vectors to
the coordinates $\la$ and $s$. We consider in $\rr^{2n+1}$ the
orthonormal basis
\[
\{J(w_{1}),w_{1},\ldots,J(w_{n-1}),w_{n-1},J(v),v,\ptl/\ptl t\}.
\]
Let $w$ be one of the vectors $w_{i}$, $J(w_{i})$. Then from \eqref{eq:geodesic2} we have
\begin{align*}
dF_{(v,\la,s)}(w)&=(s\,F(\la s)\,w-s\,G(\la s)\,J(w),0),
\\
dF_{(v,\la,s)}(\ptl_{\la})&=(2s^2 F'(\la s)\,v-2s^2 G'(\la s)\,J(v),
2s^3 h'(\la s)),
\\
dF_{(v,\la,s)}(\ptl_{s})&=((F(\la s)+\la s\,F'(\la s))\,v
-(G(\la s)+\la s\,G'(\la s))\,J(v),
\\
&2s\,H(\la s)+2s^3 H'(\la
s)),
\end{align*}
and so we obtain that the determinant of $dF_{(v,\la,s)}$ in the above
basis is given by
\[
\bigg(2s^2\,\frac{G(\la s)}{\la s}\bigg)^{n-1}\,
\det
\begin{pmatrix}
s F(\la s) & -2 s^2 G'(\la s) & -G(\la s)
\\
s G(\la s) & 2s^2 F'(\la s) & F(\la s)
\\
0 & 2 s^3 H'(\la s) & 2s\,H(\la s)
\end{pmatrix},
\]
which is equal to
\[
\bigg(2s^2\,\frac{G(\la s)}{\la s}\bigg)^{n-1}\,
\bigg(\frac{-1+\cos(\la s)+\la s\,\sin(\la s)}{\la^4}\bigg),
\]
and hence to
\[
\bigg(2s^2\,\frac{G(\la s)}{\la s}\bigg)^{n-1}\,
\bigg(\frac{\sin(\la s)\,\big(\la s\,\cos(\la s)-\sin(\la s)\big)}{\la^4}\bigg).
\]
Observe that $G(x)/x=g(x)$ is an analytic function that does not vanish in
the interval $(-2\pi,2\pi)$, and that $\sin(x)/x$ and
$(x\,\cos(x)-\sin(x))/x^3$ are analytic functions that do not vanish
in the interval $(-\pi,\pi)$.  Hence the Jacobian of
$dE_{(v,\la,s)}$ does not vanish, so that $E$ is a local analytic
diffeomorphism.

Since $E$ is a bijective mapping, it is also a global diffeomorphism.  Hence, for any point $p\in\hh^n\setminus L_{0}$ there exists a unique geodesic $\ga_{0,v}^\la$ so that $p=\ga_{o,v}^\la(s)$ and $|\la s|<\pi$.  We conclude that $\ga_{0,v}^\la$ is minimizing and that $s=d(p,0)$.

Composing the local inverse of $E$ with the projection over the coordinate $s$ we conclude that the Carnot-Carath\'eodory distance is an analytic function out of the vertical axis passing through the
origin.
\end{proof}

A simple corollary of the previous result is

\begin{lemma}
Let $W=\{(p,q)\in\hh^n\times\hh^n : \pi(p)\neq\pi(q)\}$. Then
$d:W\to\rr$ is an analytic function.
\end{lemma}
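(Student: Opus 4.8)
The plan is to deduce this from the previous lemma via the left-invariance of the Carnot-Carath\'eodory distance. Since the horizontal distribution $\hhh$ and the left-invariant Riemannian metric are both preserved by every left-translation $p\mapsto g*p$, these translations are isometries of $(\hh^n,d)$; hence $d(p,q)=d(0,p^{-1}*q)$ for all $p,q\in\hh^n$, where $p^{-1}=(-z,-t)$ denotes the group inverse of $p=(z,t)$. Writing $\Phi\colon\hh^n\times\hh^n\to\hh^n$ for the map $\Phi(p,q):=p^{-1}*q$ and $d_0(\cdot):=d(0,\cdot)$, this identity reads $d|_W=d_0\circ\Phi|_W$.

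First I would record that $\Phi$ is a polynomial map: from the group law a direct computation gives
\begin{equation*}
\Phi\big((z,t),(w,s)\big)=\Big(w-z,\;s-t-\sum_{i=1}^n\text{Im}(z_i\bar{w}_i)\Big),
\end{equation*}
so $\Phi$ is analytic on all of $\hh^n\times\hh^n$, and in particular on $W$. Next I would identify where $\Phi$ lands: the displayed formula shows $\pi(\Phi(p,q))=\pi(q)-\pi(p)$, so $\Phi(p,q)$ lies on the vertical axis $L_0=\{\pi=0\}$ exactly when $\pi(p)=\pi(q)$. Therefore $\Phi(W)\subset\hh^n\setminus L_0$.

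Finally I would invoke the previous lemma, which asserts that $d_0$ is analytic on $\hh^n\setminus L_0$, and conclude that $d|_W=d_0\circ\Phi|_W$ is a composition of analytic maps, hence analytic. The only points that genuinely need to be checked are the left-invariance of $d$ (immediate, since left-translations preserve both the class of piecewise smooth horizontal curves and their Riemannian length) and the coordinate computation of $\Phi$ and of $\pi\circ\Phi$; there is no substantial obstacle.
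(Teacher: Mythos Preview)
Your argument is correct. The reduction $d(p,q)=d_0(p^{-1}*q)$ via left-invariance, together with the observation that $(p,q)\mapsto p^{-1}*q$ is polynomial and maps $W$ into $\hh^n\setminus L_0$, gives the result immediately from the previous lemma.

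The paper takes a slightly different route: it considers the map $\hh^n\times U\to\hh^n\times\hh^n$, $(p,(v,\la,s))\mapsto(p,\ga_{p,v}^\la(s))$, argues it is locally invertible by the computation in the previous lemma, and recovers $d$ by composing the local inverse with the projection onto the $s$-coordinate. This amounts to building analytic coordinates on $W$ in which $d$ is one of the coordinate functions. Your approach is more economical---it uses left-invariance once to collapse the two-point problem to a one-point problem and then quotes the previous lemma verbatim---whereas the paper's approach, though it implicitly relies on the same left-invariance to transport the Jacobian computation from $0$ to a general $p$, yields as a byproduct a local analytic parameterization of $W$ by base point, initial direction, curvature, and arc-length. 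For the bare statement of the lemma your argument is the cleaner one.
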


\begin{proof}
We simply consider the map
\begin{align*}
\hh^n\times U&\to\hh^n\times\hh^n
\\
(p,(v,\la,s))&\mapsto (p,\ga_{p,v}^\la(s)),
\end{align*}
which is locally invertible by the previous Lemma. Composing the local
inverse with the projection over $s$ we obtain the desired result.
\end{proof}

The analyticity of the distance function has been recently treated by Haj\l asz and Zimmerman in \cite{MR3417082}.

\subsection{Variations by geodesics}
We shall often use variations by horizontal curves. The existence of such variations is guaranteed by the following result

\begin{lemma}
\label{lem:horcurves}
Let $\ga:[0,a]\to\hh^n$ be a smooth horizontal curve, $I\subset\rr$ an open interval containing the origin, and $\alpha,\beta:I\to\hh^n$ smooth curves such that $\alpha(0)=\ga(0)$, $\beta(0)=\ga(a)$. Then there exist $\eps_0>0$ and a variation $\ga_\eps$, $|\eps|<\eps_0$, of $\ga$ by horizontal curves such that $\ga_\eps(0)=\alpha(\eps)$ and $\ga_\eps(a)=\beta(\eps)$ for $|\eps|<\eps_0$.
\end{lemma}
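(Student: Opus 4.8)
The plan is to transfer the problem to the plane $\rr^{2n}$ via the Riemannian submersion $\pi$, using the correspondence recorded in the Remark after \eqref{eq:geodesic2}: a curve in $\hhn$ is horizontal if and only if it is the horizontal lift of its projection $\Gamma=\pi\circ\ga$ determined by one value of the height, and this lift is obtained by adjusting the height according to the integral appearing there. Hence, to prescribe a horizontal curve it suffices to prescribe a $C^1$ curve in $\rr^{2n}$ together with an initial height, and to produce the required variation it is enough to exhibit a smooth family of planar curves $\Gamma_{\eps,u}$, with an auxiliary scalar parameter $u$, whose horizontal lifts have the prescribed endpoints for a suitable choice $u=u(\eps)$.

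First I would set $\sigma:=\pi\circ\ga$, $a_\eps:=\pi(\alpha(\eps))-\pi(\ga(0))$ and $b_\eps:=\pi(\beta(\eps))-\pi(\ga(a))$, which are smooth in $\eps$ and vanish at $\eps=0$; fix smooth $\phi,\psi\colon[0,a]\to\rr$ with $\phi(0)=\psi(a)=1$ and $\phi(a)=\psi(0)=0$; and take a smooth $\rr^{2n}$-valued map $c$ on $[0,a]$ with $c(0)=c(a)=0$, to be specified below. Then I put
\[
\Gamma_{\eps,u}(s):=\sigma(s)+\phi(s)\,a_\eps+\psi(s)\,b_\eps+u\,c(s),
\]
so that $\Gamma_{\eps,u}(0)=\pi(\alpha(\eps))$, $\Gamma_{\eps,u}(a)=\pi(\beta(\eps))$ and $\Gamma_{0,0}=\sigma$. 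Let $\ga_{\eps,u}$ be the horizontal lift of $\Gamma_{\eps,u}$ through $\alpha(\eps)$; it is jointly smooth in $(\eps,u,s)$, satisfies $\ga_{0,0}=\ga$ by uniqueness of horizontal lifts, and $\ga_{\eps,u}(0)=\alpha(\eps)$, and has terminal point $\ga_{\eps,u}(a)=\big(\pi(\beta(\eps)),h(\eps,u)\big)$ with $h$ smooth. Since $\ga_{0,0}=\ga$ ends at $\ga(a)=\beta(0)$, the smooth function $g(\eps,u):=h(\eps,u)-t(\beta(\eps))$ vanishes at $(0,0)$, and it only remains to solve $g=0$ for $u$ as a function of $\eps$.

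The crucial step is to choose $c$ so that $\partial g/\partial u(0,0)\ne 0$; then the implicit function theorem provides $\eps_0>0$ and a smooth $u(\eps)$ with $u(0)=0$ and $g(\eps,u(\eps))\equiv 0$, and $\ga_\eps:=\ga_{\eps,u(\eps)}$ is the desired variation. Writing $h(\eps,u)=t(\alpha(\eps))+\tfrac12\int_0^a\escpr{J(\dot\Gamma_{\eps,u}),\Gamma_{\eps,u}}\,ds$ as in the Remark, differentiating in $u$ at $(\eps,u)=(0,0)$ (where $\partial_u\Gamma_{\eps,u}=c$ and $\Gamma_{0,0}=\sigma$), and integrating by parts using $c(0)=c(a)=0$ and the skew-symmetry of $J$, one obtains $\partial g/\partial u(0,0)=-\int_0^a\escpr{\dot\sigma(s),J(c(s))}\,ds$ (up to a nonzero constant factor), with $\escpr{\cdot,\cdot}$ the Euclidean product on $\rr^{2n}$. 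Since $\ga$ is not constant — which is always the case in the intended applications, where $\ga$ is a geodesic segment — and $|\dot\sigma|=|\dga|$ along a horizontal curve, $\dot\sigma$ does not vanish identically; so I pick $s_0\in(0,a)$ with $\dot\sigma(s_0)\ne 0$ and take $c(s):=-\eta(s)\,J(\dot\sigma(s_0))$ for a nonnegative bump $\eta$ supported in a small neighborhood of $s_0$ with $\int_0^a\eta>0$. Then $J(c)=\eta\,\dot\sigma(s_0)$, the integrand equals $\eta(s)\,\escpr{\dot\sigma(s),\dot\sigma(s_0)}$, which stays close to $\eta(s)\,|\dot\sigma(s_0)|^2>0$ by continuity, so $\partial g/\partial u(0,0)\ne 0$.

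The main obstacle, and the reason a nonconstant $\ga$ is genuinely required, is precisely this transversality: unlike in the Riemannian case, one cannot prescribe the two endpoints of a horizontal curve independently, because the terminal height of the lift is rigidly determined by the planar projection, so one scalar degree of freedom is missing and must be recovered. The recovery uses the non-integrability of $\hhh$ — concretely, that an infinitesimal shear of a nonconstant planar curve changes the enclosed symplectic area, hence the terminal height of the lift, to first order. (This is consistent with the fact that a smooth $\eps$-family of horizontal curves collapsing to a point has length $O(\eps)$, whereas nearby points may sit at Carnot-Carath\'eodory distance of order $\eps^{1/2}$, so the constant case is genuinely excluded.)
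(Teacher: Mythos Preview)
Your proof is correct and follows essentially the same strategy as the paper's: project $\ga$ to $\rr^{2n}$, build a two-parameter family of planar curves with the right endpoints, take horizontal lifts through $\alpha(\eps)$, and use the implicit function theorem to match the terminal height to $t(\beta(\eps))$. The only substantive difference is the choice of the transversal perturbation. The paper introduces a fixed loop $V(s)=\cos(2\pi s/a)\,\partial_{x_1}+\sin(2\pi s/a)\,\partial_{y_1}$ and asserts that transversality comes from $\int_0^a\escpr{J(\dot V),V}\neq 0$; you instead take $c(s)=-\eta(s)\,J(\dot\sigma(s_0))$ and correctly compute that $\partial g/\partial u(0,0)=-\int_0^a\escpr{\dot\sigma,J(c)}$, which is nonzero provided $\dot\sigma$ does not vanish identically. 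Your computation of the $u$-derivative (cross terms with $\sigma$, then integration by parts) is the right one, and your explicit acknowledgment that $\ga$ must be nonconstant---an assumption the paper does not state but which is equally needed there and harmless in all applications of the lemma---is a genuine improvement in clarity.
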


\begin{proof}
We decompose $\ga=(\ga_1,\ga_2)\in\rr^{2n}\times\rr$. Consider the two-parameter family of curves $\Ga_{\eps,\rho}:[0,a]\to\rr^{2n}$ defined by
\begin{equation}
\label{eq:defgaepsrho}
\Ga_{\eps,\rho}(s)=\ga_1(s)+\eps U(\eps,s)+\rho V(s).
\end{equation}
Here $U(\eps,s)$ is defined by
\[
U(\eps,s)=\bigg(1-\frac{s}{a}\bigg)\,\rho_\alpha(\eps)+\frac{s}{a}\,\rho_\beta(\eps),
\]
where
\begin{equation*}
\alpha_1(\eps)=\alpha_1(0)+\eps\rho_\alpha(\eps),
\quad
\beta_1(\eps)=\beta_1(0)+\eps\rho_\beta(\eps),
\end{equation*}
and $V(s)$ is a vector field along $\ga_1(s)$ vanishing at $0$ and $a$ and such that
\[
\int_0^a\escpr{J(\dot{V}),V}(\xi)\,d\xi\neq 0.
\]
It is enough to take $V(s)=\cos(\tfrac{2\pi}{a}s)\tfrac{\ptl}{\ptl x_1}+\sin(\tfrac{2\pi}{a}s)\tfrac{\ptl}{\ptl y_1}$.

Now we consider the two-parameter family of horizontal curves in $\hh^n$ given by
\begin{equation}
\Lambda_{\eps,\rho}(s):=\big(\Ga_{\eps,\rho}(s),\alpha_2(\eps)+\frac{1}{2}\int_0^s\escpr{J(\dot{\Ga}_{\eps,\rho}),\Ga_{\eps,\rho}}(\xi)\,d\xi\big).
\end{equation}
When $\eps=\rho=0$ we have from \eqref{eq:defgaepsrho} that $\Ga_{0,0}=\ga_1$ and so $\Lambda_{0,0}=\ga$. For $s=0$, we have
\[
\Lambda_{\eps,\rho}(0)=\big(\ga_1(0)+\eps\rho_\alpha(\eps),\alpha_2(\eps)\big)=(\alpha_1(\eps),\alpha_2(\eps))=\alpha(\eps)
\]
for any $\eps$, $\rho$. For $s=a$ we have
\[
(\Lambda_{\eps,\rho})_1(a)=\Ga_{\eps,\rho}(a)=\ga_1(a)+\eps\rho_\beta(\eps)=\beta_1(\eps).
\]
So it remains to prove we can choose $\rho(\eps)$ for $\eps$ small so that $\rho(0)=0$ and
\[
(\Lambda_{\eps,\rho(\eps)})_2(a)=\alpha_2(\eps)+\frac{1}{2}\int_0^a \escpr{J(\dot{\Ga}_{\eps,\rho(\eps)}),\Ga_{\eps,\rho(\eps)}}(\xi)\,d\xi=\beta_2(\eps).
\]
We define a smooth function of two variables $G:I\times\rr\to\rr$ by
\[
G(\eps,\rho):=(\alpha_2(\eps)-\beta_2(\eps))+\frac{1}{2}\int_0^a\escpr{J(\dot{\Ga}_{\eps,\rho}),\Ga_{\eps,\rho}}(\xi)\,d\xi.
\]
This function satisfies
\[
G(0,0)=\ga_1(0)-\ga_2(0)+\frac{1}{2}\int_0^a\escpr{J(\dot{\ga}_1),\ga_1)}(\xi)\,d\xi=0
\]
as $\ga$ is a horizontal curve. Moreover
\[
\frac{\ptl G}{\ptl\rho}(0,0)=\frac{1}{2}\int_0^a\escpr{J(\dot{V}),V}(\xi)\,d\xi\neq 0
\]
by the choice of $V$. By the Implicit Function Theorem, there exist $\eps_0>0$ and a function $\rho:(-\eps_0,\eps_0)\to\rr$ such that $\rho(0)=0$ and $G(\eps,\rho(\eps))=0$ for all $\eps\in (-\eps_0,\eps_0)$. This implies that $\ga_\eps=\Lambda_{\eps,\rho(\eps)}:[0,a]\to\hh^n$ is a variation of $\ga$ by smooth horizontal curves joining $\alpha(\eps)$ and $\beta(\eps)$.
\end{proof}

In the next result we compute the derivative of length when we deform a sub-Riemannian geodesic by horizontal curves

\begin{lemma}
\label{lem:1stvarlen}
Let $\ga:[0,a]\to\hh^n$ be a geodesic of curvature $\la$, and $\{\ga_\eps\}_{\eps}$ a variation of $\ga$ by horizontal curves. Let $U(s):=(\ptl\ga_{\eps}/\ptl\eps)(s)$. Then
\begin{equation}
\label{eq:1stvarlen}
\frac{d}{d\eps}\bigg|_{\eps=0} L(\ga_{\eps})=\escpr{U,\dot{\ga}+\tfrac{\la}{2}\,T_{\ga}}\big|_{0}^{a}.
\end{equation}
\end{lemma}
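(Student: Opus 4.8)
The plan is to compute the first variation of the Riemannian length $L(\ga_\eps)=\int_0^a|\dot\ga_\eps(s)|\,ds$ directly, differentiating under the integral sign, and then to use the geodesic equation \eqref{eq:geodesic} together with the metric property of the pseudo-hermitian connection to integrate by parts. First I would recall that the standard first variation formula \eqref{eq:1stlength}, valid for variations by horizontal curves, gives
\[
\frac{d}{d\eps}\bigg|_{\eps=0}L(\ga_\eps)=-\int_0^a\escpr{\nabla_{\dga}\dga,U}\,ds+\big[\text{boundary term}\big],
\]
but since in \eqref{eq:1stlength} the endpoints were fixed, the boundary term was dropped there; here I must retain it. Concretely, writing the integrand as $|\dga_\eps|$ and using that $\ga$ has constant speed (which we may take to be $1$ after noting the curvature-invariance under constant-speed reparameterization established in \S~\ref{sub:geodesics}), the derivative of $|\dga_\eps|$ at $\eps=0$ is $\escpr{\nabla_U\dga_\eps,\dga}=\escpr{\nabla_{\dga}U+\tor(U,\dga),\dga}$. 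The torsion term $\escpr{\tor(U,\dga),\dga}=-2\escpr{U,J(\dga)}\escpr{T,\dga}$ vanishes because $\dga$ is horizontal, so the derivative of the length is $\int_0^a\escpr{\nabla_{\dga}U,\dga}\,ds=\int_0^a\big(\dga\escpr{U,\dga}-\escpr{U,\nabla_{\dga}\dga}\big)\,ds$.

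Next I would substitute the geodesic equation $\nabla_{\dga}\dga=-\la J(\dga)$, obtaining
\[
\frac{d}{d\eps}\bigg|_{\eps=0}L(\ga_\eps)=\int_0^a\dga\escpr{U,\dga}\,ds+\la\int_0^a\escpr{U,J(\dga)}\,ds.
\]
The first integral is $\escpr{U,\dga}\big|_0^a$ by the fundamental theorem of calculus. For the second integral, the key observation is that the variation is by horizontal curves, so the admissibility relation holds: exactly as in Remark~\ref{rem:unitspeedjacobi} (or the computation preceding \eqref{eq:jacobidecomp}), one has $\dga\escpr{U,T}=2\escpr{J(\dga),U}$, hence $\escpr{U,J(\dga)}=\tfrac12\,\dga\escpr{U,T}$ and
\[
\la\int_0^a\escpr{U,J(\dga)}\,ds=\frac{\la}{2}\int_0^a\dga\escpr{U,T}\,ds=\frac{\la}{2}\,\escpr{U,T}\big|_0^a.
\]
Combining the two boundary contributions gives $\escpr{U,\dga+\tfrac{\la}{2}T}\big|_0^a$, which is \eqref{eq:1stvarlen}. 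For a general constant speed $c=|\dga|$ one divides by $c$ inside the integrand, which rescales $\la$ to the curvature $\la/c$ in a way consistent with the invariance noted in \S~\ref{sub:geodesics}; alternatively one argues directly that both sides of \eqref{eq:1stvarlen} are unchanged under constant-speed reparameterization, so it suffices to treat the unit-speed case.

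I do not expect a serious obstacle here: the proof is a bookkeeping exercise in the first variation of length, and every ingredient (differentiation under the integral, $[U,\dga_\eps]=0$, the form \eqref{eq:torsion} of the torsion, the geodesic equation, and the horizontal-variation identity $\dga\escpr{U,T}=2\escpr{J(\dga),U}$) is already available in the excerpt. The one point requiring a little care is making sure the derivative of the norm $|\dga_\eps|$ is handled correctly when $\dga\neq 0$ (which is guaranteed since $\ga$ is a regular geodesic), and confirming that no interior term survives precisely because $\ga$ itself satisfies the geodesic equation — so the only contributions are the two endpoint evaluations. This is what makes the formula so clean and is the whole content of the lemma.
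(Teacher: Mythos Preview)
Your proof is correct and follows essentially the same approach as the paper: compute the derivative of the length integrand using the torsion relation $\nabla_U\dga_\eps=\nabla_{\dga}U+\tor(U,\dga)$, drop the vertical torsion term against the horizontal $\dga$, integrate by parts via the metric property of $\nabla$, substitute the geodesic equation, and convert $\la\escpr{U,J(\dga)}$ into $\tfrac{\la}{2}\dga\escpr{U,T}$ using the admissibility relation \eqref{eq:admissible}. The paper's proof is slightly terser (it does not discuss the unit-speed normalization explicitly) but the steps are identical.
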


\begin{proof}
We have
\begin{align*}
\frac{d}{d\eps}\bigg|_{\eps=0} L(\ga_{\eps})&=\int_{0}^{a} \escpr{\nabla_{U}\dot{\ga},\dot{\ga}}
=\int_{0}^{a} \escpr{\nabla_{\dga}U+\tor(U,\dga),\dga}
\\
&=\int_{0}^{a}
\big(\dot{\ga}\,\escpr{U,\dot{\ga}}-\escpr{U,\nabla_{\dot{\ga}}\dot{\ga}}\big),
\end{align*}
since $[U,\dga]=0$ and $\tor(U,\dga)$ is a vertical vector. From equations \eqref{eq:geodesic} and \eqref{eq:admissible} we get
\[
-\escpr{U,\nabla_{\dga}\dga}=\escpr{U,\la J(\dga)}=\tfrac{\la}{2}\,\dga\,\escpr{U,T},
\]
from which \eqref{eq:1stvarlen} follows.
\end{proof}

\subsection{A second fundamental form for $C^2$ hypersurfaces}
\label{sec:hor2nd}
In this subsection, assume that $S\subset\hh^n$ is an embedded hypersurface of class $C^2$. Let $S_0$ be the \emph{singular set} of points $q\in S$ where $T_qS$ coincides with the horizontal distribution. Let $N$ be a unit normal to $S$ and $\nuh$ the horizontal unit normal, defined by
\begin{equation}
\label{eq:defnuh}
\nuh:=N_h/|N_h|.
\end{equation}
The characteristic vector field $Z$ is defined by
\begin{equation}
\label{eq:defZ}
Z:=J(\nuh).
\end{equation}
It is a horizontal vector field defined in $S\setminus S_0$ and tangent to $S$. The vector field $\escpr{N,T}\,\nuh-\mnh \,T$, tangent to $S$ and not horizontal, will be often considered along this paper.

If $q\in S\setminus S_0$ and $u\in T_qS\cap\hhh_q$, we define the \emph{horizontal second fundamental form} of $S$ by 
\begin{equation}
\label{eq:sigma}
A(u)=-\nabla_u\nuh-\ntnh\,J(u)_{ht},
\end{equation}
where by $U_{ht}$ we denote the tangent horizontal projection onto $S$, defined by
\[
U_{ht}=U-\escpr{U,T}T-\escpr{U,\nuh}\nuh
\]
for any vector field $U$. The operator $A:T_qS\cap\hhh_q\to T_qS\cap\hhh_q$ defined by \eqref{eq:sigma} was introduced in \cite{MR2898770} and studied in \cite{cchy} and \cite{MR3504195} . 

Given a $C^1$ function $f:S\to\rr$, we define its \emph{horizontal gradient on} $S$ as the~tangent and horizontal vector field $\nabla_S^h f$ in $S\setminus S_0$ satisfying $\escpr{(\nabla_S^hf)_q,u}=u(f)$ for any $q\in S\setminus S_0$ and $u\in T_qS\cap\hhh_q$. The following properties are known

\begin{proposition}
\label{prop:sigmaproperties}
Let $S\subset \hh^n$ be an embedded $C^2$ hypersurface with horizontal unit normal $\nuh$ and horizontal second fundamental form $A$. Then we have
\begin{enumerate}
\item $\escpr{A(u),v}=\escpr{u,A(v)}$, $u,v\in T_qS\cap\hhh_q$.
\item For $E=\escpr{N,T}\nuh-|N_h|T$, we have on $S\setminus S_0$:
\begin{equation}
\label{eq:nablaEnuh}
-|N_h|^{-1}\nabla_E\nuh=\nabla_S^h\bigg(\frac{\escpr{N,T}}{|N_h|}\bigg)+2\,\bigg(\frac{\escpr{N,T}}{|N_h|}\bigg)^2J(\nuh).
\end{equation}
\item $Z=J(\nuh)$ is an eigenvector of $A$ if and only if $[U,Z]$ is tangent and horizontal for any vector field $U$ in $S\setminus S_0$ tangent, horizontal and orthogonal to $Z$.
\end{enumerate}
\end{proposition}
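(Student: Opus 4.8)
The plan is to verify the three assertions in order, using the formula \eqref{eq:sigma} defining $A$, the Levi-Civita connection formula \eqref{eq:levicivita}, and the relations $J^2=-\text{Id}$, $D_XT=J(X)$. For (i), I would compute $\escpr{A(u),v}-\escpr{u,A(v)}$ directly from \eqref{eq:sigma}. The term $-\ntnh\escpr{J(u)_{ht},v}+\ntnh\escpr{u,J(v)_{ht}}$ contributes $-\ntnh(\escpr{J(u),v}-\escpr{u,J(v)})$ once one observes that $u,v$ are already tangent and horizontal, so the projection $(\cdot)_{ht}$ may be dropped when pairing against them; since $J$ is skew-adjoint, $\escpr{J(u),v}=-\escpr{u,J(v)}$, and these two terms \emph{add up} rather than cancel, giving $-2\ntnh\escpr{J(u),v}$. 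So the symmetry of $A$ is equivalent to $\escpr{\nabla_u\nuh,v}-\escpr{u,\nabla_v\nuh}=-2\ntnh\escpr{J(u),v}$. I would obtain this from $u\escpr{\nuh,v}=0=v\escpr{\nuh,u}$ (as $\nuh\perp T_qS$ on $S\setminus S_0$), which gives $\escpr{\nabla_u\nuh,v}-\escpr{\nabla_v\nuh,u}=-\escpr{\nuh,\nabla_uv-\nabla_vu}=-\escpr{\nuh,[u,v]+\tor(u,v)}$; since $u,v$ are tangent to $S$, $[u,v]$ is tangent to $S$ and hence orthogonal to $\nuh$, while $\tor(u,v)=-2\escpr{u,J(v)}T$ by \eqref{eq:torsion}, and $\escpr{\nuh,T}=0$. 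This yields the identity $\escpr{\nabla_u\nuh,v}-\escpr{\nabla_v\nuh,u}=-2\escpr{u,J(v)}\escpr{\nuh,T}=0$, but wait — one must instead use the \emph{full} normal $N$, not $\nuh$, to detect the $\escpr{N,T}$ factor; more precisely $u\escpr{N,v}=0$ on $S$ gives, after writing $N=\mnh\nuh+\escpr{N,T}T$ and using $D_uT=J(u)$, the correct identity. I expect this bookkeeping to be routine but delicate about which normal is used where.

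For (ii), I would start from the defining relation \eqref{eq:sigma} applied with $E=\escpr{N,T}\nuh-\mnh T$: note $E$ is tangent to $S$ but not horizontal, so \eqref{eq:sigma} does not apply verbatim to $u=E$; instead the identity \eqref{eq:nablaEnuh} is a separate computation of $\nabla_E\nuh$. Writing $\nuh$ as a horizontal vector field and using \eqref{eq:levicivita} together with $\nabla$ (pseudo-hermitian, so left-invariant fields are parallel and $\nabla J=0$), I would expand $\nabla_E\nuh=\escpr{N,T}\nabla_{\nuh}\nuh-\mnh\nabla_T\nuh$. The key structural facts are $\nabla_{\nuh}\nuh=0$ modulo horizontal-tangent corrections coming from the non-integrability, and $\nabla_T\nuh=0$ since $\nabla$ is flat with left-invariant fields parallel — so the whole left side reduces to derivatives of the \emph{coefficients} $\escpr{N,T}$ and $\mnh$ of $\nuh$ in a left-invariant frame. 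Differentiating the constraint $|\nuh|^2=1$ and expressing the tangential derivatives of $\ntnh$ via the horizontal gradient $\nabla_S^h$ should produce exactly the right-hand side of \eqref{eq:nablaEnuh}, with the coefficient $2$ of the $J(\nuh)$ term arising from the torsion/bracket constant $2T$ in \eqref{eq:bracket2}. This is the computational heart of the proposition.

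For (iii), I would characterize when $Z=J(\nuh)$ is an eigenvector of $A$. Since $\escpr{A(Z),Z}$ is automatically the $Z$-eigenvalue candidate, $Z$ is an eigenvector iff $\escpr{A(Z),U}=0$ for all $U\in T_qS\cap\hhh_q$ with $U\perp Z$. From \eqref{eq:sigma}, $\escpr{A(Z),U}=-\escpr{\nabla_Z\nuh,U}-\ntnh\escpr{J(Z)_{ht},U}$; since $J(Z)=J^2(\nuh)=-\nuh$ is normal, its tangential-horizontal projection vanishes, so $\escpr{A(Z),U}=-\escpr{\nabla_Z\nuh,U}$. Now $\escpr{\nabla_Z\nuh,U}=Z\escpr{\nuh,U}-\escpr{\nuh,\nabla_ZU}=-\escpr{\nuh,\nabla_ZU}$ since $U\perp\nuh$. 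By symmetry (part (i)) I can equally write this as $-\escpr{\nabla_U\nuh,Z}$ up to the explicit $-2\ntnh\escpr{U,J(Z)}$ term, which vanishes because $J(Z)=-\nuh\perp U$; then $\escpr{\nabla_U\nuh,Z}=-\escpr{\nuh,\nabla_UZ}$. Combining, $Z$ is an eigenvector iff $\escpr{\nuh,\nabla_ZU-\nabla_UZ}=0$ for all such $U$, i.e. iff $\escpr{\nuh,[Z,U]+\tor(Z,U)}=0$. Since $\tor(Z,U)=-2\escpr{Z,J(U)}T$ is vertical and $\nuh\perp T$, this reduces to $\escpr{\nuh,[U,Z]}=0$; as $[U,Z]$ automatically has no other obstruction to being tangent (one checks $\escpr{[U,Z],T}$ and $\escpr{[U,Z],\nuh}$ separately, the former being $-2\escpr{U,J(Z)}=2\escpr{U,\nuh}=0$), this says exactly that $[U,Z]$ is tangent and horizontal. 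The main obstacle throughout is keeping consistent track of the three mutually orthogonal directions $\nuh$, $Z$, $T$ and correctly distinguishing $N$ from $\nuh$; once the bookkeeping is fixed, each part is a short computation.
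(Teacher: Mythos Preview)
Your plan for (i) is essentially the paper's: after your detour you land on using the \emph{full} normal $N$ (not $\nuh$) via $\escpr{[U,V],N}=0$, which is exactly how the paper argues. Your initial attempt with $\nuh$ would in fact also work once you observe $\escpr{\nuh,[u,v]}=-\tfrac{\escpr{N,T}}{|N_h|}\escpr{[u,v],T}$ and compute the $T$-component of the bracket, but going straight to $N$ is cleaner.

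Your plan for (ii) has a genuine gap. The decomposition $\nabla_E\nuh=\escpr{N,T}\nabla_{\nuh}\nuh-|N_h|\nabla_T\nuh$ is ill-posed on $S$: neither $\nuh$ nor $T$ is tangent to $S$, so the individual terms depend on an extension of $\nuh$ off $S$. Your claim ``$\nabla_T\nuh=0$ since left-invariant fields are parallel'' is wrong --- $\nuh$ is not left-invariant, and its coefficients in a left-invariant frame have nontrivial $T$-derivatives once extended. The paper avoids this entirely: it tests $\nabla_E\nuh$ against an arbitrary horizontal tangent $U$ and uses that $[E,U]$ is tangent to $S$, so $\escpr{\nabla_EU-\nabla_UE-\tor(E,U),N}=0$. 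Decomposing $N=|N_h|\nuh+\escpr{N,T}T$ and computing $\escpr{\nabla_UE,N}=-\escpr{E,\nabla_UN}=|N_h|^2\,U(\escpr{N,T}/|N_h|)$ yields the formula directly. The key idea you are missing is this bracket trick; without it your ``derivatives of the coefficients'' program never closes up.

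Your argument for (iii) contains a logical slip. You derive two expressions for $\escpr{A(Z),U}$, namely $\escpr{\nuh,\nabla_ZU}$ and (via symmetry) $\escpr{\nuh,\nabla_UZ}$, and then conclude that $Z$ is an eigenvector iff their \emph{difference} vanishes. But symmetry already makes the difference vanish identically; what you need is that \emph{each} expression vanishes, which is a different condition. (The paper, incidentally, does not write out a proof of (iii).)
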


\begin{proof}
To prove the symmetry of $A$ take $U, V\in TS\cap\hhh$. Since $\escpr{[U,V],N}=0$ we have
\[
\escpr{\nabla_UV-\nabla_VU-\tor(U,V),N}=0.
\]
As $\nabla_UV,\nabla_VU$ are horizontal and $\tor(U,V)=2\escpr{J(U),V}\,T$ is vertical, we get
\[
-\escpr{V,\mnh\nabla_U\nuh+\escpr{N,T}J(U)}+\escpr{U,\mnh\nabla_V\nuh+\escpr{N,T}J(V)}=0,
\]
that implies the symmetry of $A$.

Let us now prove \eqref{eq:nablaEnuh}. Let $Z=J(\nuh)$. We observe first that $\nabla_E\nuh$ is orthogonal to $\nuh$ and $T$. Let $U$ be any horizontal and tangent vector field on $S\setminus S_0$. Since $[E,U]$ is tangent we have
\[
\escpr{\nabla_EU-\nabla_UE-\tor(E,U),N}=0.
\]
Decomposing $N=\mnh\nuh+\escpr{N,T}T$ we have
\[
\mnh\escpr{\nabla_EU,\nuh}-\escpr{\nabla_UE,N}-2\escpr{N,T}\escpr{J(E),U}=0.
\]
As $U$ is horizontal, $\escpr{\nabla_EU,\nuh}=-\escpr{U,\nabla_E\nuh}$. From the definition of $E$ and $Z$ we obtain $\escpr{N,T}\escpr{J(E),U}=\escpr{N,T}^2\escpr{Z,U}$. Finally
\begin{align*}
\escpr{\nabla_UE,N}=-\escpr{E,\nabla_UN}&=-\escpr{N,T}U(\mnh)+\mnh U(\escpr{N,T})
\\
&=\mnh^2U\bigg(\ntnh\bigg).
\end{align*}
So we obtain
\[
\escpr{U,-\mnh\nabla_E\nuh-\mnh^2U\bigg(\ntnh\bigg)-2\escpr{N,T}^2Z}=0,
\]
that implies \eqref{eq:nablaEnuh}.
\end{proof}

The symmetry of the horizontal second fundamental form proved in Proposition~\ref{prop:sigmaproperties}(1) implies the existence, at every point $q\in S\setminus S_0$, of an orthonormal basis $e_1,\ldots,e_{2n-1}$ of $T_qS\cap\hhh_q$ and of real numbers $\kappa_1,\ldots,\kappa_{2n-1}$ such that
\[
A(e_i)=\kappa_ie_i,\quad i=1,\ldots, (2n-1).
\]
The \emph{mean curvature} $H$ of $S$, defined on $S\setminus S_0$ as the trace of the operator $A$ on $T_qS\cap\hhh_q$, is thus given by
\[
H=\kappa_1+\cdots +\kappa_{2n-1}.
\]
The mean curvature plays a prominent role in some geometric variational problems in sub-Riemannian geometry because of its relation to the first variation of the sub-Riemannian perimeter functional in contact sub-Riemannian manifolds, see e.g. \cite{MR2435652}, \cite{MR3044134}, and \cite{MR2898770}.

The \emph{norm} of the horizontal second fundamental form is the function $|\sg|^2$ defined on $S\setminus S_0$ by the formula
\begin{equation}
\label{eq:sigma2}
|\sg|^2=\sum_{i=1}^{2n-1} \kappa_i^2.
\end{equation}
Taking an orthonormal basis $e_1,\ldots,e_{2n-1}$ of the tangent horizontal space $TS\cap \hhh$, the trace $\sum_{i=1}^{2n-1}|\nabla_{e_i}\nuh|^2$ does not depend on the basis. The function $|\sg|^2$ can be computed this way since \eqref{eq:sigma2} is the trace corresponding to an orthonormal basis of principal directions.

Following Cheng et al.~\cite{cchy}, we define an \emph{umbilic} hypersurface in the sub-Rie\-mannian Heisenberg group $\hh^n$ as one for which $Z$ is a principal direction, and the remaining ones have equal principal curvatures. More precisely, there exists and orthonormal basis $e_1=Z,e_2,\ldots,e_{2n-1}$, and scalars $\rho,\mu$ such that $A(e_1)=\rho e_1$ and $A(e_i)=\mu e_i$ for $i\ge 2$. Hypersurfaces of revolution are umbilic by Proposition~3.1 in \cite{cchy}.

The following result can be deduced from Proposition~4.2 in \cite{cchy}

\begin{proposition}
\label{prop:cchy}
Let $S\subset\hh^n$ be an umbilic hypersurface with $A(Z)=\rho Z$ and $A(V)=\mu V$ for all tangent horizontal vectors $V$ orthogonal to $Z$. Then we have
\begin{equation}
V(\mu)=V(\rho)=V\bigg(\ntnh\bigg)=0,\quad Z(\mu)=(\rho-2\mu)\,\ntnh,
\end{equation}
and
\begin{equation}
Z\bigg(\ntnh\bigg)+\bigg(\ntnh\bigg)^2=\mu(\mu-\rho).
\end{equation}
and
\begin{equation}
\nabla_S^h\bigg(\ntnh\bigg)=Z\bigg(\ntnh\bigg)\,Z.
\end{equation}
\end{proposition}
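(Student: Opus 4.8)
The plan is to derive the three groups of identities from the flatness of the pseudo-hermitian connection, after recording the first-order consequences of umbilicity. To begin with, (iii) is a formal consequence of (i): in an orthonormal frame $Z,e_{2},\dots,e_{2n-1}$ of $TS\cap\hhh$ adapted to the umbilic splitting, $\nabla_{S}^{h}(\ntnh)=Z(\ntnh)\,Z+\sum_{i\ge 2}e_{i}(\ntnh)\,e_{i}$, and the terms with $i\ge 2$ drop out once $V(\ntnh)=0$ is known. Thus the substance is (i) and (ii), which are Proposition~4.2 of \cite{cchy} rewritten in the notation of this paper.

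First I would turn umbilicity into first-order identities for $\nuh$. From the definition \eqref{eq:sigma} of the horizontal second fundamental form, using $J(Z)=-\nuh$ (hence $J(Z)_{ht}=0$) and $\escpr{J(V),\nuh}=-\escpr{V,Z}=0$ (hence $J(V)_{ht}=J(V)$) for tangent horizontal $V\perp Z$, umbilicity gives $\nabla_{Z}\nuh=-\rho Z$ and $\nabla_{V}\nuh=-\mu V-(\ntnh)\,J(V)$; differentiating $Z=J(\nuh)$ with $\nabla J=0$ then yields $\nabla_{V}Z=-\mu J(V)+(\ntnh)\,V$ and $\nabla_{Z}Z=\rho\,\nuh$. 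Next I would use that $TS$ is involutive ($S$ is a hypersurface) and that, $Z$ being a principal direction, Proposition~\ref{prop:sigmaproperties}(3) makes $[V,Z]$ tangent and horizontal; a short computation with the formulas above gives $\escpr{[V,Z],Z}=0$, so $\nabla_{[V,Z]}\nuh=-\mu[V,Z]-(\ntnh)\,J([V,Z])$, and $\tor(Z,V)=0$, so $\nabla_{Z}V=-[V,Z]-\mu J(V)+(\ntnh)\,V$. Substituting all of this into $R(V,Z)\nuh=\nabla_{V}\nabla_{Z}\nuh-\nabla_{Z}\nabla_{V}\nuh-\nabla_{[V,Z]}\nuh=0$, the terms carrying $[V,Z]$ and $J([V,Z])$ cancel identically, and comparing the coefficients of $Z$, $V$ and $J(V)$ gives at one stroke
\[
V(\rho)=0,\qquad Z(\mu)=(\rho-2\mu)\,\ntnh,\qquad Z\bigg(\ntnh\bigg)+\bigg(\ntnh\bigg)^{2}=\mu(\mu-\rho),
\]
which is (ii) together with the non-vacuous part of (i). The remaining identities $V(\mu)=0$ and $V(\ntnh)=0$ (which only say something when $n\ge 2$, and then yield (iii) via the first paragraph) I would obtain in the same way from $R(V,V')\nuh=0$ for two tangent horizontal $V,V'\perp Z$, with \eqref{eq:nablaEnuh} used to control the component of $\nabla_{E}\nuh$ transverse to $Z$, where $E=\escpr{N,T}\nuh-\mnh T$.

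The hard part will be bookkeeping rather than ideas: the covariant derivatives $\nabla_{Z}V$, $\nabla_{V}V'$ and $\nabla_{E}\nuh$ of frame fields on $S$ are not tangent to $S$ in general, and $\nabla$ carries the torsion $\tor(X,Y)=-2\escpr{X,J(Y)}\,T$, so each such derivative has to be split consistently into its parts along $Z$, the $e_{i}$, $E$, $\nuh$ and $T$, with the torsion terms carried along; a careless choice of frame makes the algebra unwieldy, whereas the frame adapted to the umbilic splitting keeps it short. The only other delicate point is matching the sign and normalization conventions here with those of \cite{cchy}; the geometric content --- that umbilicity forces precisely this rigidity --- is \cite[Prop.~4.2]{cchy}.
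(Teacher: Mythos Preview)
The paper does not actually prove this proposition; the sentence preceding it merely says it ``can be deduced from Proposition~4.2 in \cite{cchy}'', and no argument is given. Your sketch therefore does more than the paper: it outlines a direct derivation from the flatness of the pseudo-hermitian connection. The $R(V,Z)\nuh=0$ computation is correct and yields, exactly as you claim, $V(\rho)=0$, $Z(\mu)=(\rho-2\mu)\,\tfrac{\escpr{N,T}}{|N_h|}$, and identity~(ii) from the $Z$-, $V$- and $J(V)$-components respectively; the $[V,Z]$ and $J([V,Z])$ terms do cancel, and $\escpr{[V,Z],Z}=0$ follows from $\nabla_{Z}Z=\rho\,\nuh$ together with $V\perp\nuh$. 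For the remaining identities $V(\mu)=V\big(\tfrac{\escpr{N,T}}{|N_h|}\big)=0$ via $R(V,V')\nuh=0$, note that $[V,V']$ now has vertical part $2\escpr{V,J(V')}\,T$ and hence a non-zero $E$-component when decomposed in $T_{q}S$; this is precisely where \eqref{eq:nablaEnuh} enters, so your pointer to it is on the mark. In short: the paper cites, you prove, and your outline is sound.
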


\section{The distance function to a closed set}
\label{sec:distance}

Let $E\subset\hh^n$ be a closed set with boundary $S=\ptl E$. We define the distance to $E$ by
\begin{equation}
\label{eq:da}
\de(x):=\inf\{d(x,q) : q\in E\},
\end{equation}
where $d$ is the Carnot-Carathéodory distance in $\hh^n$. The function $\de$ is lipschitz with respect to $d$ as it satisfies $|\de(x)-\de(y)|\le d(x,y)$.  By Pansu-Rademacher's Theorem \cite{MR979599}, the function $\de$ is Pansu-differentiable almost everywhere (see also Calderón's proof of this result in \cite[Thm.~6.13]{MR2312336}). For $r>0$ we define the open tubular neighborhood of $E$ of radius $r>0$ by
\[
E_{r}:=\{p\in\hh^n : \de(p)<r\}.
\]

For $q\in E$, we define the set $\ttan(E,q)$ of tangent vectors to $E$ at $q$ as the subset of $T_{q}\hh^n$ composed of the zero vector and the limits, in the tangent bundle, of the tangent vectors of $C^1$ curves starting from $q$ and contained in $E$. The set $\ttan(E,q)$ is closed and positively homogeneous and will be called the \emph{tangent cone} of $E$ at $q$. Obviously, if $E\subset F$ then $\ttan(E,q)\subset\ttan(F,q)$. The \emph{horizontal tangent cone} $\ttanh(E,q)$ is defined as $\ttan(E,q)\cap\mathcal{H}_{q}$. The \emph{normal cone} $\nor(E,q)$ is defined as the set of  vectors $u\in T_q\hh^n$ such that $\escpr{u,v}\le 0$ for all $v\in\ttan(E,q)$. The set $\nor(E,q)$ is a closed convex cone of $T_{q}\hh^n$. The \emph{horizontal normal cone} $\norh(E,q)$ is defined as the set
\[
\norh(E,q)=\{v\in \hhh_q: \escpr{v,u}\le 0 \text{ for all }u\in\ttanh(E,q)\}.
\]
In general $\norh(E,q)\neq\nor(E,q)\cap\hhh_q$. Observe that if $\hhh_q\subset\ttan(E,q)$ then $\norh(E,q)=\{0\}$.

We shall say that $q\in E$ is a \emph{singular} point if the tangent cone $\ttan(E,q)$ is contained in one of the half-spaces in $T_q\hh^n$ determined by the hyperplane $\hhh_q$. We shall say that a point $q\in E$ is \emph{regular} if it is not singular. The set of singular points of $E$ will be denoted by $E_0$. Observe that interior points of $E$ are regular since $\ttan(E,q)=T_q\hh^n$ when $q\in\intt(E)$. The set of singular points of $E$ at the boundary $S$ of $E$ will be denoted by $S_0$.

For sufficiently regular boundaries we have the following result

\begin{lemma}
\label{lem:cones}
Let $E\subset\hh^n$ be the closure of an open set with boundary $S=\ptl E$. Let $q\in S$.
\begin{enum}
\item Assume that $S$ is of class $C^1$ in an open neighborhood of $q$, and let $N_q$ be the outer unit normal to $S$ at $q$. Then $\ttan(E,q)=\{u\in T_q\hh^n:\escpr{u,N_q}\le 0\}$ and $\nor(E,q)=\{\rho N_q: \rho\ge 0\}$.
\item Assume that $S$ is of class $C_\hh^1$ in an open neighborhood of $q$, and let $\nu_q$ be the outer horizontal unit normal of $S$ at $q$, then $\ttan_H(E,q)=\{u\in\hhh_q : \escpr{u,\nu_q}\le 0\}$ and $\norh(E,q)=\{\rho \nu_q:\rho\ge 0\}$.
\end{enum}
\end{lemma}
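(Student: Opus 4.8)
The plan is to prove both parts by combining the Euclidean-geometry description of the tangent cone to a $C^1$ domain with the observation that horizontal curves are still $C^1$ curves, so that the ordinary (Euclidean) tangent cone and the sub-Riemannian one coincide for these regular boundaries. For part (i), first I would recall the elementary fact that if $S$ is a $C^1$ hypersurface near $q$ and $E$ is the closure of the open side with outer unit normal $N_q$, then a $C^1$ curve $\ga$ with $\ga(0)=q$ and $\ga\subset E$ must satisfy $\escpr{\dg(0),N_q}\le 0$: writing $S$ locally as a level set $\{\phi=0\}$ with $\nabla\phi$ a positive multiple of $N$ on $S$ and $\phi\le 0$ on $E$, the function $\phi\circ\ga$ has a maximum at $0$ with value $0$, so $\tfrac{d}{ds}\big|_{0}(\phi\circ\ga)=\escpr{\nabla\phi(q),\dg(0)}\le 0$. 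Passing to limits of tangent vectors preserves the (closed) inequality, so $\ttan(E,q)\subseteq\{u:\escpr{u,N_q}\le 0\}$. For the reverse inclusion, given $u$ with $\escpr{u,N_q}\le 0$ I would exhibit an explicit $C^1$ curve in $E$ with initial velocity $u$: if $\escpr{u,N_q}<0$ one can just take the straight Euclidean segment $s\mapsto q+su$ (contained in $E$ for small $s$ since $\phi$ decreases), and if $\escpr{u,N_q}=0$ one perturbs to $s\mapsto q+su-s^2 N_q$, whose derivative at $0$ is $u$ and for which $\phi$ is strictly negative for small $s>0$ by Taylor expansion. This gives $\ttan(E,q)=\{u:\escpr{u,N_q}\le0\}$, and then $\nor(E,q)=\{\rho N_q:\rho\ge 0\}$ follows immediately from the definition of the polar cone of a half-space.

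For part (ii) the statement involves $C^1_{\hh}$ regularity of $S$ near $q$, i.e.\ $S$ is (locally) an $\hh$-regular hypersurface in the sense of Franchi--Serapioni--Serra-Cassano with a continuous horizontal unit normal $\nu$; here one cannot assume $S$ is a Euclidean $C^1$ hypersurface. The plan is to reduce to the horizontal tangent cone $\ttanh(E,q)=\ttan(E,q)\cap\hhh_q$ and show it equals the horizontal half-space $\{u\in\hhh_q:\escpr{u,\nu_q}\le0\}$. One inclusion uses that a horizontal $C^1$ curve in $E$ still satisfies $\escpr{\dg(0),\nu_q}\le 0$: locally $E=\{\phi\le 0\}$ with $\phi$ only $\hh$-differentiable, but the horizontal gradient $\nabla_{\hh}\phi$ is continuous and a positive multiple of $\nu$, and for a horizontal curve $\ga$ the derivative of $\phi\circ\ga$ at $0$ equals $\escpr{\nabla_{\hh}\phi(q),\dg(0)}$, which is $\le 0$ as before; limits preserve this. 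For the opposite inclusion, given $u\in\hhh_q$ with $\escpr{u,\nu_q}\le0$ I would build a horizontal curve tangent to $u$ that stays in $E$; the construction in the Remark after \eqref{eq:geodesic2} lets one lift any $C^1$ curve $\Ga$ in $\rr^{2n}$ to a horizontal curve in $\hh^n$, so one can take $\Ga$ to be the projected straight segment (perturbed in the $\nu$-direction in the boundary case) and lift it, adjusting the initial height so the lift passes through $q$ — and one must check the lift stays on the $\phi\le 0$ side, which follows again from the sign of the derivative of $\phi$ along the lift together with a second-order perturbation in the degenerate case $\escpr{u,\nu_q}=0$. Finally $\norh(E,q)=\{\rho\nu_q:\rho\ge0\}$ is the polar, inside $\hhh_q$, of a half-space of $\hhh_q$, hence a ray.

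The main obstacle is case (ii) in the degenerate direction $\escpr{u,\nu_q}=0$ under only $C^1_{\hh}$ regularity: unlike the Euclidean case one has no control on the normal variation of $\phi$ beyond first order (there is no second-order Taylor expansion), so the naive second-order perturbation $q+su-s^2\nu_q$ need not lie in $E$. The way around this is to use that $\phi$ is strictly monotone along integral curves of $\nu$ (which follows from $\nabla_{\hh}\phi = |\nabla_{\hh}\phi|\,\nu$ with $|\nabla_{\hh}\phi|$ continuous and nonvanishing near $q$): one first flows slightly to the interior of $E$ along $-\nu$ to get a point $q_s$ with $\phi(q_s)<0$ of order comparable to $s$, then runs the horizontal lift of the projected segment from $q_s$; since $\phi$ varies only to first order in $s$ along that segment with a controllable coefficient, for $s$ small the whole curve stays in $\{\phi\le 0\}$, and a reparametrization/diagonal argument produces a single $C^1$ horizontal curve through $q$ with velocity $u$ whose tail lies in $E$, yielding $u\in\ttanh(E,q)$. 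For part (i) there is no real obstacle beyond bookkeeping, since everything there is classical Euclidean differential geometry applied verbatim.
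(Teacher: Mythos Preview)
For part (i) your argument is correct and is essentially what the paper has in mind (the paper writes out only (ii), declaring (i) similar).

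For part (ii), however, you are making the problem much harder than it needs to be. The obstacle you identify --- constructing, under mere $C_\hh^1$ regularity, a horizontal curve in $E$ with initial velocity $u$ when $\escpr{u,\nu_q}=0$ --- is real, and your proposed workaround (flow inward along $-\nu$, then run a lifted segment, then diagonalise) is delicate and not obviously complete as sketched. The paper sidesteps this difficulty entirely with one observation you overlooked: the tangent cone $\ttan(E,q)$ is \emph{closed} (this is stated right after its definition), hence so is $\ttanh(E,q)=\ttan(E,q)\cap\hhh_q$. It therefore suffices to prove $\{u\in\hhh_q:\escpr{u,\nu_q}<0\}\subset\ttanh(E,q)$ and then take the closure. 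For the strict inequality, any horizontal $C^1$ curve $\alpha$ with $\alpha(0)=q$ and $\alpha'(0)=u$ satisfies $(f\circ\alpha)'(0)=\escpr{(\nabla_\hh f)_q,u}<0$, so $\alpha([0,\eps])\subset E$ for some $\eps>0$ and $u\in\ttanh(E,q)$. No second-order perturbation, no inward flow, no diagonal argument is required, and none of the $C_\hh^1$ subtleties you worry about ever arise. For the reverse inclusion $\ttanh(E,q)\subset\{u:\escpr{u,\nu_q}\le 0\}$ your first-order argument passed to limits matches the paper's.
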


\begin{proof}
We shall give the proof of (ii) since the one of (i) is similar.

If $S$ is of class $C_\hh^1$ near $q$, then there exists an open ball $B(q,r)$ and a function $f\in C_\hh^1(B(q,r))$ such that $E\cap B(q,r)=f^{-1}((-\infty,0])$, $\ptl E\cap B(q,r)=f^{-1}(0)$, and $\nu_q=(\nabla_\hh f)_q/|(\nabla_\hh f)_q|$. Take $u\in \hhh_q$ such that $\escpr{u,\nu_q}<0$ and a horizontal curve $\alpha:[0,1]\to\hh^n$ of class $C^1$ satisfying $\alpha'(0)=u$. Since $\escpr{u,\nu_q}=|(\nabla_\hh f)_q|^{-1}\escpr{u,(\nabla_\hh f)_q}<0$ we get $u(f)=\tfrac{d}{ds}|_{s=0} (f\circ\alpha)(s)<0$. Hence there exists $\eps>0$ such that $\alpha([0,\eps])\subset E$. This implies that $u\in \ttanh(E,q)$ and so the inclusion $\{u\in\hhh_q:\escpr{u,\nu_q}<0\}\subset\ttanh(E,q)$ holds. Since $\ttanh(E,q)$ is closed we obtain $\{u\in\hhh_q: \escpr{u,\nu_q}\le 0\}\subset\ttanh(E,q)$. For the opposite inclusion take $u\in\ttanh(E,q)$. The vector $u$ is the limit of a sequence $u_i$ of tangent vectors to $C^1$ curves contained in $E$ for which inequality $\escpr{u_i,\nu_q}\le 0$ holds. Taking limits when $i\to\infty$ we obtain $\escpr{u,\nu_q}\le 0$.

Finally equality $\nor(E,q)=\{\rho \nu_q:\rho\ge 0\}$ follows trivially. 
\end{proof}

\begin{remark}
Lemma~\ref{lem:cones} implies that the set of singular points of a closed set $E$ with $C^1$ boundary $S$ is the union of the interior of $E$ and the points in $S$ with $T_qS=\hhh_q$.
\end{remark}

Following Federer's terminology \cite{MR0110078}, we define $\unp(E)$ as the set of points $p\in\hh^n$ for which there is a unique point of $E$ nearest to $p$. The map $\xi_{E}:\unp(E)\to E$ associates with $p\in\unp(E)$ the unique $q\in E$ such that $\de(p)=d(p,q)$. Trivially $E\subset\unp(E)$ and $\xi_E(q)=q$ for all $q\in E$.

For $q\in E$ we define $\reach(E,q)$ as the supremum of $r>0$ for which $B(q,r)\subset\unp(E)$.  If $K\subset E$ then $\reach(E,K)$ is defined as the  infimum of $\reach(E,q)$ for $q\in K$. The reach of a set $E$ is defined by
\[
\reach(E):=\inf\{\reach(E,q) : q\in E\}.
\]
Observe that the function $q\mapsto \reach(E,q)$ is continuous.

Let us now prove that the metric projection $\xi_E$ is a continuous function in $\unp(E)$.

\begin{proposition}
\label{prop:xicont}
Let $E\subset\hh^n$ be a closed set. Then the function $\xi_E:\unp(E)\to E$ is continuous.
\end{proposition}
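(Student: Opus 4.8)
The plan is to argue by contradiction using a standard compactness/limit argument. Suppose $\xi_E$ fails to be continuous at some point $p_0\in\unp(E)$. Then there is a sequence $p_i\to p_0$ in $\unp(E)$ such that $\xi_E(p_i)$ does not converge to $\xi_E(p_0)$; write $q_0:=\xi_E(p_0)$ and $q_i:=\xi_E(p_i)$. First I would observe that the sequence $(q_i)$ is bounded: indeed $d(p_i,q_i)=\de(p_i)\le \de(p_0)+d(p_i,p_0)$, so the $q_i$ all lie in a fixed metric ball around $p_0$; since closed balls in $\hh^n$ are compact (the Carnot-Carathéodory metric induces the Euclidean topology and is complete, by Hopf-Rinow as recalled in \S~2.4), we may pass to a subsequence with $q_i\to q_\infty$ for some $q_\infty\in\hh^n$. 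Because $E$ is closed and $q_i\in E$, we get $q_\infty\in E$.

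Next I would pass to the limit in the distance identity. Since $d$ is continuous on $\hh^n\times\hh^n$ and $\de$ is continuous (being $d$-Lipschitz), from $\de(p_i)=d(p_i,q_i)$ we obtain $\de(p_0)=d(p_0,q_\infty)$. Thus $q_\infty$ is a nearest point of $E$ to $p_0$. But $p_0\in\unp(E)$, so the nearest point is unique, forcing $q_\infty=q_0=\xi_E(p_0)$. This contradicts the assumption that $\xi_E(p_i)\not\to\xi_E(p_0)$, since we have exhibited a subsequence of $(\xi_E(p_i))$ converging to $\xi_E(p_0)$ — and in fact, running the same argument on every subsequence of the original $(p_i)$ shows that every subsequence of $(\xi_E(p_i))$ has a further subsequence converging to $\xi_E(p_0)$, whence $\xi_E(p_i)\to\xi_E(p_0)$. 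This proves continuity of $\xi_E$ on $\unp(E)$.

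The only point requiring a little care — and the closest thing to an obstacle — is the local boundedness/precompactness of $(q_i)$, which relies on the properness of closed metric balls in $(\hh^n,d)$; this is exactly the Hopf-Rinow statement invoked earlier in \S~2.4, so no new input is needed. Everything else is soft: continuity of $d$ and $\de$, closedness of $E$, and the defining uniqueness property of $\unp(E)$. I would write the proof in the "every subsequence has a convergent sub-subsequence with the right limit" form to avoid any awkwardness about the original sequence, since this is cleaner than extracting a single diagonal subsequence.
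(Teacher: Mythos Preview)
Your proof is correct and follows essentially the same route as the paper's: both argue by contradiction, bound the sequence $\xi_E(p_i)$ via the triangle inequality $d(\xi_E(p_i),p_0)\le \de(p_i)+d(p_i,p_0)$, extract a convergent subsequence with limit in $E$, pass to the limit in $\de(p_i)=d(p_i,\xi_E(p_i))$, and conclude from uniqueness at $p_0$. The only cosmetic difference is that the paper begins by passing to a subsequence with $d(\xi_E(p_i),\xi_E(p_0))\ge\eps$, whereas you phrase the contradiction via the ``every subsequence has a further subsequence with the right limit'' criterion; these are equivalent formulations of the same argument.
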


\begin{proof}
Let $\xi=\xi_E$.  Consider a sequence $\{p_{i}\}_{i\in\nn}\subset\unp(A)$ converging to $p\in\unp(A)$. Let us prove that $\xi(p_{i})\to \xi(p)$ by contradiction: otherwise, passing to a subsequence, we may assume that there exists some $\eps>0$ such that
\begin{equation}
\label{eq:eps}
d(\xi(p_{i}),\xi(p))\ge\eps.
\end{equation}
Observe that the quantity
\[
d(\xi(p_{i}),p)\le d(\xi(p_{i}),p_{i})+d(p_{i},p)=\delta_{E}(p_{i})+
d(p_{i},p),
\]
is bounded. Hence $\xi(p_{i})$ is bounded and we may assume, passing again to a subsequence, that $\xi(p_i)$ converges to some point $q\in E$. By the continuity of $\delta_{E}$ and $d$ we have
\[
\delta_{E}(p)=\lim_{i\to\infty} \delta_{E}(p_{i})=\lim_{i\to\infty}
d(\xi(p_{i}),p_{i})=d(q,p).
\]
Since $p\in\unp(E)$ we have $\xi(p)=q$ and, since $q=\lim_{i\to\infty}\xi(p_{i})$, we get a contradiction to \eqref{eq:eps} that proves the continuity of $\xi_E$.
\end{proof}

Let $p\in\unp(E)\setminus E$.  Then there is either just one minimizing geodesic connecting $p$ and $\xi_E(p)$ of curvature $\la(p)$ (in case $p$ and $\xi_E(p)$ do not lie in the same vertical line), or there are at least two minimizing geodesics connecting $p$ and $\xi_E(p)$, and all geodesics joining connecting both points have the same geodesic curvature $\la(p)$ (in case $p$ and $\xi_E(p)$ lie in the same vertical line). 

So even if the minimizing geodesic connecting $p$ and $\xi_E(p)$ is not unique, the quantity $\la(p)$ is well defined. Let us see that it is a continuous function

\begin{proposition}
\label{prop:lacont}
Let $E\subset\hh^n$ be a closed set.  Then $\la:\unp(E)\setminus E\to\rr$ is a continuous function.
\end{proposition}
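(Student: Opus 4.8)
The goal is to show that $\la(p_i)\to\la(p)$ for every sequence $\{p_i\}\subset\unp(E)\setminus E$ converging to $p\in\unp(E)\setminus E$. I would start by recording what is immediate: writing $q_i:=\xi_E(p_i)$, $q:=\xi_E(p)$, $L_i:=\de(p_i)$ and $L:=\de(p)$, the Lipschitz continuity of $\de$ gives $L_i\to L$, Proposition~\ref{prop:xicont} gives $q_i\to q$, and since $p,p_i$ lie outside the closed set $E$ we have $L>0$ and $L_i>0$. Let $\ga_i:[0,L_i]\to\hh^n$ be a minimizing geodesic (one exists by Hopf--Rinow) with $\ga_i(0)=q_i$, $\ga_i(L_i)=p_i$, parameterized by arc length, and of curvature $\la_i:=\la(p_i)$. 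The first substantive step is an a priori bound: $\ga_i$ is minimizing on an interval of length $L_i$, and a geodesic of curvature $\la\ne 0$ is not length-minimizing on intervals longer than $2\pi/|\la|$ (see the discussion of geodesics in \S~\ref{sub:geodesics} and \cite[p.~327]{MR1267892}), so $|\la_i|\le 2\pi/L_i$; since $L_i\to L>0$, the sequence $\{\la_i\}$ is bounded. It therefore suffices to prove that every subsequence of $\{\la_i\}$ has a further subsequence converging to $\la(p)$.

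For the core argument I would pass to a subsequence along which $\la_i\to\la_\infty$. To compare the geodesic data, which a priori live in the varying tangent spaces $T_{q_i}\hh^n$, I would left-translate: since left translations are isometries of the Carnot-Carath\'eodory distance preserving the horizontal distribution, the pseudo-hermitian connection, and hence geodesics together with their curvature, $\tilde\ga_i:=q_i^{-1}*\ga_i$ equals $\ga_{0,\tilde v_i}^{\la_i}$, where $\tilde v_i\in\sph^{2n-1}$ is the left-translate of the unit horizontal vector $\dot{\ga}_i(0)$, identified as in \S~\ref{sub:geodesics} with a unit vector of $\rr^{2n}$; passing to a further subsequence we may assume $\tilde v_i\to\tilde v_\infty\in\sph^{2n-1}$. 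The map $(v,\la,s)\mapsto\ga_{0,v}^\la(s)$ is smooth, being given explicitly by \eqref{eq:geodesics}, so from $(\tilde v_i,\la_i,L_i)\to(\tilde v_\infty,\la_\infty,L)$ together with $\tilde\ga_i(L_i)=q_i^{-1}*p_i\to q^{-1}*p$ we conclude that $\ga_{0,\tilde v_\infty}^{\la_\infty}(L)=q^{-1}*p$. Translating back, $\gamma:=q*\ga_{0,\tilde v_\infty}^{\la_\infty}$ restricted to $[0,L]$ is a unit-speed horizontal curve joining $\xi_E(p)$ to $p$ whose length $L$ equals $\de(p)=d(p,\xi_E(p))$; hence $\gamma$ is a minimizing geodesic between those two points, of curvature $\la_\infty$. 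By the paragraph preceding the statement, all minimizing geodesics connecting $p$ and $\xi_E(p)$ share the same curvature $\la(p)$, so $\la_\infty=\la(p)$, which finishes the proof.

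The step I expect to be the real obstacle is the case in which $p$ and $\xi_E(p)$ lie on a common vertical line, where the connecting minimizing geodesic need not be unique; the argument above is arranged precisely to bypass this, since it relies not on uniqueness of the connecting geodesic but only on uniqueness of its curvature among all minimizing geodesics between the two points, together with the compactness afforded by the bound $|\la_i|\le 2\pi/L_i$. (In the complementary case $\pi(p)\neq\pi(\xi_E(p))$ one may argue more directly: the inverse-function-theorem argument of \S~2.4 shows that the curvature of the unique minimizing geodesic from $q$ to $p$ depends analytically on $(p,q)$ on the open set $\{\pi(p)\neq\pi(q)\}$, and one then concludes by composing with the continuous map $p\mapsto(p,\xi_E(p))$ of Proposition~\ref{prop:xicont}.)
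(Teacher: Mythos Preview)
Your proof is correct and follows essentially the same approach as the paper's: bound the curvatures via $|\la_i|\,\delta_E(p_i)\le 2\pi$, pass to convergent subsequences of curvatures and initial velocities, use continuity of the geodesic map to produce a minimizing geodesic from $\xi_E(p)$ to $p$ with the limiting curvature, and conclude by the uniqueness of the curvature among minimizing geodesics joining those two points. The only differences are cosmetic---you phrase the argument via ``every subsequence has a further subsequence'' rather than by contradiction, and you make the compactness step for the initial velocities explicit through left translation to the origin---but these do not change the substance of the proof.
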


\begin{proof}
Consider a sequence $\{p_{i}\}_{i\in\nn}\subset\unp(E)$ converging to a point $p\in\unp(E)\setminus E$. Let us prove the continuity that $\lim_{i\to\infty}\la(p_i)=\la(p)$ by contradiction: passing eventually to a subsequence, we assume that there exists some $\eps>0$ such that
\begin{equation}
\label{eq:eps2}
|\la(p_{i})-\la(p)|\ge\eps.
\end{equation}

For every $i$, choose a minimizing geodesic connecting $\xi(p_i)$ and $p_i$ with initial velocity $v_i$. Passing again to a subsequence if necessary we may assume that $v_i$ converges to some unit vector $v_0$.

By the minimality of geodesics, $|\la(p_{i})|\delta_{E}(p_{i})\le 2\pi$ and, since $\delta_{E}(p_{i})\to\delta_{E}(p)>0$, we have that $|\la(p_{i})|$ is bounded. Passing again to a subsequence, we may assume that $\la(p_{i})$ converges to some $\la_0\in\rr$.

Since
$\xi(p_{i})\to\xi(p)$ and $\delta_{E}$ is continuous we have that
\[
\delta_{E}(p)=\lim_{i\to\infty}\delta_{E}(p_{i})=
\lim_{i\to\infty}\ga_{\xi(p_{i}),v(p_{i})}^{\la(p_{i})}(\delta_{E}(p_{i}))=
\ga_{\xi(p),v_0}^{\la_0}(\delta_{E}(p)).
\]
By the uniqueness of curvatures of minimizing geodesics, we
obtain that $\la(p)=\la_{0}$, which contradicts \eqref{eq:eps2}.
\end{proof}

\begin{remark}
\label{rem:regunique}
Let $p\not\in E$ and $q\in E$ such that $\delta_E(p)=d(p,q)$. Assume there is not a unique length-minimizing geodesic connecting $p$ and $q$. Then $p$ and $q$ lie in the same vertical line. The set
\[
S=\bigcup_{v\in\hhh_p} \ga_{p,v}^{\la(p)}([0,2\pi/\la(p)])
\]
is a $C^2$ sphere (Pansu's sphere) and $S\setminus\{q\}$ is contained in $\hh^n\setminus E$. Hence the tangent cone $\ttan(E,q)$ is contained in one of the half-spaces determined by the hyperplane $\hhh_q$. This implies that $q$ is a singular point of $E$.

Hence, if $q$ is regular then the length-minimizing geodesic connecting $p$ and $q$ is unique.
\end{remark}

Let $p\in\unp(E)\setminus E$ such that $\xi(p)$ is a regular point. We shall denote by $v(p)$ the initial velocity of the unique minimizing geodesic connecting $\xi(p)$ to $p$.

\begin{proposition}
\label{prop:vcont}
Let $E\subset\hh^n$ be a closed set. Consider a sequence $\{p_{i}\}_{i\in\nn}\subset\unp(E)\setminus E$ converging to a point $p\in\unp(E)\setminus E$.  Assume that $\xi(p)$ is a regular point. Let $\{v_{i}\}_{i\in\nn}$ be a sequence of initial tangent vectors to minimizing geodesics connecting $\xi(p_{i})$ to $p_{i}$. Then $\lim_{i\to\infty}v_{i}=v(p)$.

In particular, the function $q\mapsto v(q)$, assigning to $q\in\xi^{-1}(S\setminus S_{0})$ the initial tangent vector to the unique geodesic connecting $\xi(q)$ and $q$, is continuous in $\xi^{-1}(S\setminus S_{0})$.
\end{proposition}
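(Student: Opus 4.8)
The plan is to argue by contradiction, following exactly the scheme already used for Proposition~\ref{prop:lacont}. Suppose the conclusion fails; then, after passing to a subsequence, there is $\eps>0$ with $|v_i-v(p)|\ge\eps$ for all $i$. Since each $v_i$ is the initial velocity of an arc-length parametrized minimizing geodesic, it is a horizontal unit vector, $v_i\in\sph^{2n-1}\subset\hhh_{\xi(p_i)}$. By Proposition~\ref{prop:xicont} we have $\xi(p_i)\to\xi(p)$, so, viewed in the global left-invariant frame, the pairs $(\xi(p_i),v_i)$ lie in a precompact subset of the tangent bundle; after a further subsequence, $v_i\to v_0$ for some unit horizontal vector $v_0\in\hhh_{\xi(p)}$, with $|v_0-v(p)|\ge\eps$, in particular $v_0\ne v(p)$.

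Next I would pass to the limit in the identity $\ga_{\xi(p_i),v_i}^{\la(p_i)}(\de(p_i))=p_i$. Here $\la(p_i)$ is the common curvature of the minimizing geodesics joining $\xi(p_i)$ and $p_i$, so the chosen geodesic indeed has curvature $\la(p_i)$; moreover $\la(p_i)\to\la(p)$ by Proposition~\ref{prop:lacont}, and $\de(p_i)\to\de(p)>0$ because $\de$ is Lipschitz and $p\notin E$ with $E$ closed. Using the continuity (indeed real-analyticity, via the explicit formulas \eqref{eq:geodesics}) of the map $(q,v,\la,s)\mapsto\ga_{q,v}^{\la}(s)$, we obtain $\ga_{\xi(p),v_0}^{\la(p)}(\de(p))=p$. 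The restriction of this geodesic to $[0,\de(p)]$ is a horizontal curve of length $\de(p)=d(\xi(p),p)$ joining $\xi(p)$ to $p$, hence a minimizing geodesic connecting $p$ and $\xi(p)$ with initial velocity $v_0$.

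Now the hypothesis that $\xi(p)$ is regular enters: by Remark~\ref{rem:regunique}, a point of $E$ realizing the distance from $p$ along a non-unique minimizing geodesic must be singular, so the minimizing geodesic connecting $p$ to the regular point $\xi(p)$ is unique, and its initial velocity is by definition $v(p)$. Therefore $v_0=v(p)$, contradicting $v_0\ne v(p)$; this proves $v_i\to v(p)$. The ``in particular'' statement is then immediate: given a convergent sequence $q_i\to q$ in $(\unp(E)\setminus E)\cap\xi^{-1}(S\setminus S_0)$, each $\xi(q_i)$ is regular, so $v(q_i)$ is well defined by Remark~\ref{rem:regunique}, and applying the first part with $p_i=q_i$, $v_i=v(q_i)$, $p=q$ yields $v(q_i)\to v(q)$.

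I expect no serious obstacle; the only points requiring care are (i) checking that the limiting curve is genuinely minimizing, which is automatic once one has $\ga_{\xi(p),v_0}^{\la(p)}(\de(p))=p$ and $|v_0|=1$, since its length then equals $d(\xi(p),p)$; and (ii) observing that the possibly non-unique choices of minimizing geodesics at the points $\xi(p_i)$, which may themselves be singular, play no role, because uniqueness of the connecting geodesic is invoked only in the limit, at the regular point $\xi(p)$.
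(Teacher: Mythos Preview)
Your proof is correct and follows essentially the same approach as the paper's: a contradiction argument extracting a subsequential limit $v_0\ne v(p)$, passing to the limit in the geodesic endpoint identity via the continuity of $\xi$, $\la$, and $\delta_E$, and then invoking Remark~\ref{rem:regunique} to force $v_0=v(p)$. Your version is in fact more carefully written, making explicit the compactness of the $v_i$ and why the limiting curve is genuinely minimizing, points the paper leaves implicit.
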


\begin{proof}
In case the sequence $v_{i}$ does not converge to $v(p)$, we may extract a convergent subsequence to some vector $v_{0}\neq v(p)$. Since $\xi(p_{i})\to\xi(p)$, $\la(p_{i})\to\la(p)$ and $\delta_{E}(p_{i})\to \delta_{E}(p)$, passing to a subsequence we may assume that
\[
\lim_{i\to\infty}\ga_{\xi(p_{i}),v_{i}}^{\la(p_{i})}(\delta_{A}(p_{i}))
=\ga_{\xi(p),v_{0}}^{\la(p)}(\delta_{A}(p)).
\]
By Remark~\ref{rem:regunique}, the regularity of $\xi(p)$ implies the existence of a unique minimizing geodesic connecting $\xi(p)$ and $p$.  By the above formula this geodesic should be $\ga_{\xi(p),v_{0}}^{\la(p)}$. Hence we would have $v_{0}=v(p)$, yielding a contradiction.
\end{proof}

We conclude this section by considering the regularity of the boundaries of tubular neighborhoods of a set $E$ in $\unp(E)$. We shall need first the following Lemma

\begin{lemma}
\label{lem:federer}
Let $E\subset\hh^n$ be a closed set. Let $\delta=\delta_E$, $\xi=\xi_E$.
\begin{enum}
\item Let $f$ be a lipschitz function on an open subset $\Omega\subset\hh^n$. Let $X$ be a continuous vector field on $\Om$ such that $(\nabla_\hh f)_q=X_q$ whenever $f$ is $\hh$-differentiable at $q$. Then $(\nabla_\hh f)_q=X_q$ for all $q\in\Om$.
\item If $p\not\in E$ and $\delta$ is $\hh$-differentiable at $p$, then
\begin{equation}
\label{eq:nabladeltaE}
(\nabla_\hh\delta)_p=\ga'(\delta(p)),
\end{equation}
where $\ga:[0,\delta(p)]\to\hh^n$ is any minimizing geodesic connecting $\ptl E$ and $p$.
\item Let $A$ be the interior of the set $(\unp(E)\setminus E)\cap\xi^{-1}(\ptl E\setminus E_0)$. Then the horizontal gradient $\nabla_\hh\delta$ is continuous in $A$ and so $\delta\in C_\hh^1(A)$.
\end{enum}
\end{lemma}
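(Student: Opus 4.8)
The plan is to establish (i), (ii), (iii) in that order, using (i) as the technical tool that turns (ii) into (iii). For part (i), the point is to promote ``$\nabla_\hh f=X$ almost everywhere'' to ``$\nabla_\hh f=X$ everywhere, with $C_\hh^1$ regularity''. Since $f$ is lipschitz, the Pansu--Rademacher theorem gives a full-measure set of points where $f$ is $\hh$-differentiable, and there $\nabla_\hh f$ coincides with the distributional horizontal gradient of $f$; hence the distributional horizontal gradient of $f$ equals $X$ a.e., and therefore as distributions. As $X$ is continuous, a group-mollification argument yields $f\in C_\hh^1(\Om)$ with $\nabla_\hh f=X$ pointwise: with $f_\eps$ the group-convolution mollification of $f$, one has $f_\eps\in C^\infty$, $f_\eps\to f$ locally uniformly, and for each left-invariant horizontal field $W\in\{X_i,Y_i\}$, $Wf_\eps=(Wf)*\rho_\eps\to Wf=\escpr{X,W}$ locally uniformly, so $\{f_\eps\}$ is locally Cauchy in $C_\hh^1$ and the limit $f$ is $C_\hh^1$ with the asserted gradient. (Equivalently, one may argue along the flows of the $X_i,Y_i$: for a.e.\ integral curve $\ga$ of such a field $f\circ\ga$ is $C^1$ with $(f\circ\ga)'=\escpr{X,\dga}$, and a limiting argument over nearby integral curves — using that $f$ is lipschitz and $X$ continuous — extends this to every integral curve, forcing all horizontal derivatives of $f$ to exist and be continuous.)

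For part (ii), let $\ga\colon[0,\delta(p)]\to\hh^n$ be a minimizing geodesic parameterized by arc length with $\ga(0)\in\ptl E$ and $\ga(\delta(p))=p$. From $\delta(\ga(s))\le d(\ga(s),\ga(0))=s$ and $\delta(\ga(s))\ge\delta(p)-d(\ga(s),p)=s$ we get $\delta(\ga(s))=s$ on $[0,\delta(p)]$. Since $\delta$ is $\hh$-differentiable at $p$ and $\ga$ is a smooth horizontal curve, the chain rule along horizontal curves gives $1=\tfrac{d}{ds}\big|_{s=\delta(p)}\delta(\ga(s))=\escpr{(\nabla_\hh\delta)_p,\dga(\delta(p))}$. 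On the other hand $\delta$ is $1$-lipschitz for $d$, so $|(\nabla_\hh\delta)_p|\le1$; together with $|\dga(\delta(p))|=1$ this is the equality case of the Cauchy--Schwarz inequality, hence $(\nabla_\hh\delta)_p=\dga(\delta(p))$. In particular the right-hand side does not depend on which minimizing geodesic from $\ptl E$ to $p$ is chosen.

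For part (iii), let $q\in A$. Then $q\in\unp(E)\setminus E$ and $\xi_E(q)\in\ptl E\setminus E_0$ is a regular point, so by Remark~\ref{rem:regunique} there is a unique minimizing geodesic from $\xi_E(q)$ to $q$, with well-defined unit initial velocity $v(q)$ and curvature $\la(q)$. Set $X_q:=\tfrac{d}{ds}\big|_{s=\delta(q)}\ga_{\xi_E(q),v(q)}^{\la(q)}(s)$, which by Lemma~\ref{lem:geocoor} equals $\cos(\la(q)\delta(q))\,[\inv{v(q)}]_q-\sin(\la(q)\delta(q))\,[\inv{J(v(q))}]_q$. By Propositions~\ref{prop:xicont}, \ref{prop:lacont} and~\ref{prop:vcont} the maps $q\mapsto\xi_E(q),\la(q),v(q)$ are continuous on $A$, and $\delta$ is continuous, so $X$ is a continuous vector field on $A$. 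By Pansu--Rademacher $\delta$ is $\hh$-differentiable at a.e.\ point of the open set $A$, and at each such point part~(ii), applied to the unique minimizing geodesic to $q$, gives $(\nabla_\hh\delta)_q=X_q$. Thus $X$ is a continuous vector field coinciding with $\nabla_\hh\delta$ wherever the latter exists, so part~(i) applied to $f=\delta$, $\Om=A$, yields $(\nabla_\hh\delta)_q=X_q$ for every $q\in A$ and $\delta\in C_\hh^1(A)$.

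I expect part~(i) to be the main obstacle: the passage from ``a.e.'' to ``everywhere'' relies on the regularity theory of lipschitz (equivalently horizontal-Sobolev) functions on Carnot groups, realized through group mollification or a careful flow-line argument. Once (i) is available, (ii) and (iii) are short, the only delicate points being the validity of the chain rule for $\hh$-differentiable functions along smooth horizontal curves and the continuity inputs already secured in Propositions~\ref{prop:xicont}--\ref{prop:vcont}.
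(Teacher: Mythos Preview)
Your proof is correct, and parts (i) and (iii) follow essentially the same strategy as the paper: for (i) the paper simply cites Lemma~6.1 in Arcozzi--Ferrari (itself modeled on Federer's Lemma~4.7), while you supply a concrete mollification/flow-line argument; for (iii) your reduction to (i) via Propositions~\ref{prop:xicont}--\ref{prop:vcont} and Pansu--Rademacher is exactly what the paper does.

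The interesting difference is in part~(ii). The paper fixes an arbitrary horizontal direction $u$ at $p$, takes a horizontal curve $\alpha$ with $\alpha'(0)=u$, compares $\delta_E\circ\alpha$ with $f(s):=d(\alpha(s),q)$ (which dominates it and agrees at $s=0$), and then computes $f'(0)=\escpr{u,\dga(\delta(p))}$ via the first variation formula~\eqref{eq:1stvarlen}; this identifies $\escpr{(\nabla_\hh\delta)_p,u}$ for every $u$ and hence the gradient itself. Your route is more intrinsic: you observe $\delta(\ga(s))\equiv s$ along the minimizing geodesic, differentiate at the endpoint using the chain rule, and invoke the equality case of Cauchy--Schwarz together with $|\nabla_\hh\delta|\le 1$. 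Both approaches rest on the same chain rule for $\hh$-differentiable functions along smooth horizontal curves. Your argument is slightly more economical in that it avoids the first variation formula altogether; the paper's argument, on the other hand, tests against an arbitrary horizontal curve through $p$, which makes the identification of the gradient direct rather than passing through a norm inequality. One small point to keep in mind in your version: the equality $\delta(\ga(s))=s$ is only established for $s\in[0,\delta(p)]$, so strictly speaking you obtain the left derivative at $s=\delta(p)$; but since $\hh$-differentiability of $\delta$ at $p$ forces the two-sided derivative of $\delta\circ\ga$ to exist there, this is harmless.
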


\begin{proof}
The proof of (i) was given in Lemma~6.1 in \cite{MR2299576}. It is inspired by Lemma~4.7 in Federer \cite{MR0110078}.

To prove (ii) consider a point $p\not\in E$ where $\delta$ is $\hh$-differentiable and let $q\in S$ be a point in $E$ at minimum distance from $p$. Take a minimizing  geodesic $\ga:[0,\delta(p)]\to\hh^n$ connecting $q$ and $p$. Let $\alpha:(-\eps,\eps)\to\hh^n$ be a $C^1$ horizontal curve satisfying $\alpha(0)=p$ and $\alpha'(0)=u$, and define $f(s):=d(\alpha(s),q)$. Then $\delta_E(\alpha(s))\le f(s)$ and $\delta_E(\alpha(0))=f(0)$. Since $\delta$ is assumed to be $\hh$-differentiable at $p$ we have $\frac{d}{ds}|_{s=0}\delta(\alpha(s))=f'(0)$. The derivative $f'(0)$ can be computed using \eqref{eq:1stvarlen} to obtain

\begin{equation}
\label{eq:gradient}
\escpr{(\nabla_\hh\delta)_p,u}=\lim_{s\to 0}\frac{\delta_E(\alpha(s))-\delta_E(\alpha(0))}{s}= f'(0)=\escpr{u,\ga'(\delta(p))}.
\end{equation}

Now we prove (iii). By Pansu-Rademacher Theorem, the lipschitz function $\delta$ is $\hh$-differentiable almost everywhere. For any point $p\in A$, its metric projection $\xi(p)$ is a regular point. If  $\delta$ is $\hh$-differentiable at $p$, (ii) implies that its horizontal gradient coincides with the vector field
\[
p\in U\mapsto (\gamma_{\xi(p),v(p)}^{\la(p)})'(\delta(p),
\]
that it is continuous because of Propositions~\ref{prop:xicont}, \ref{prop:lacont} and \ref{prop:vcont}. We conclude from (i) that $(\nabla_\hh\delta)_q$ exists for any $q\in A$ and it is continuous.
\end{proof}

\begin{remark}
Formula \eqref{eq:nabladeltaE} implies that, at a point $p$ of differentiability of $\delta$, all minimizing geodesics connecting $E$ with $p$ have the same tangent vector at $p$. This condition guarantees uniqueness of geodesics in Riemannian geometry since they only depend on the initial position and velocity. The dependence of sub-Riemannian geodesics in $\hh^n$ on curvature prevents the same conclusion for the Carnot-Carathéodory distance.
\end{remark}

\begin{remark}
If $p$ projects to two different points in $E$, the minimizing geodesics joining $E$ to $p$ are not minimizing beyond $p$ because of the regularity of geodesics.
\end{remark}

Let us now prove that the boundaries of tubular neighbourhoods of $\hh$-regular hypersurfaces are also regular.

\begin{theorem}
Let $E\subset\hh^n$ be a closed set with boundary $S$. For $r>0$ let $S_r=\ptl E_r$. Consider the set $A=\intt(\unp(E)\setminus E)$, and take an open set $U\subset A$. If $S\cap\xi(U)$ is an $\hh$-regular hypersurface, then $S_r\cap U$ is also $\hh$-regular.
\end{theorem}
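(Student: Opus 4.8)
The plan is to exhibit $S_r\cap U$, near each of its points, as a level set of the distance function $\delta_E$, which on $U$ will be of class $C_\hh^1$ with non-vanishing horizontal gradient; by the Franchi--Serapioni--Serra-Cassano definition this is exactly what it means for $S_r\cap U$ to be $\hh$-regular. So the two things to establish are that $\delta_E\in C_\hh^1(U)$ with $\nabla_\hh\delta_E\neq 0$, and that $S_r\cap U=\{\delta_E=r\}\cap U$.

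For the first, I would argue that $U$ lies in the set where Lemma~\ref{lem:federer}(iii) applies. Since $U\subset A=\intt(\unp(E)\setminus E)$, every $p\in U$ lies outside $E$ and has a unique metric projection $\xi(p)$, which must lie on $S=\ptl E$; hence $\xi(U)\subset S$ and, by hypothesis, $S\cap\xi(U)=\xi(U)$ is an $\hh$-regular hypersurface. The key point is that $\hh$-regularity of the boundary near a point $q\in\xi(U)$ forces $q$ to be a \emph{regular} point of $E$: by Lemma~\ref{lem:cones}(ii) the horizontal tangent cone $\ttanh(E,q)$ is a proper closed half-space of $\hhh_q$ and $\norh(E,q)=\{\rho\nu_q:\rho\ge 0\}\neq\{0\}$, from which one deduces (see the last paragraph) that $q\notin S_0$, so $\xi(U)\subset S\setminus S_0=\ptl E\setminus E_0$. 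As $U$ is open and contained in $(\unp(E)\setminus E)\cap\xi^{-1}(\ptl E\setminus E_0)$, it lies in the interior of that set, so Lemma~\ref{lem:federer}(iii) gives $\delta_E\in C_\hh^1(U)$ with continuous horizontal gradient. For non-vanishing, I would invoke Lemma~\ref{lem:federer}(ii): for $p\in U$ one has $(\nabla_\hh\delta_E)_p=\ga'(\delta_E(p))$, the velocity of a length-minimizing geodesic from $S$ to $p$ parameterized by arc-length, hence a horizontal unit vector.

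To identify the level set, I would note that $S_r\cap U\subseteq\{\delta_E=r\}$ is immediate from continuity of $\delta_E$ together with $E_r=\{\delta_E<r\}$; for the reverse inclusion, given $p\in U$ with $\delta_E(p)=r$, take a $C^1$ horizontal curve $\alpha$ with $\alpha(0)=p$ and $\alpha'(0)=(\nabla_\hh\delta_E)_p$, so that $\tfrac{d}{ds}\big|_{s=0}\delta_E(\alpha(s))=|(\nabla_\hh\delta_E)_p|^2=1$ and therefore $\delta_E(\alpha(s))<r$ for small $s<0$ and $>r$ for small $s>0$, which puts $p$ in $\overline{E_r}\setminus E_r=\ptl E_r=S_r$. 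Consequently, around each point of $S_r\cap U$ one finds a metric ball $B\subset U$ with $S_r\cap B=\{f=0\}\cap B$, where $f:=\delta_E-r\in C_\hh^1(B)$ has $\nabla_\hh f=\nabla_\hh\delta_E\neq 0$ on $B$; this is precisely the definition of $S_r\cap U$ being $\hh$-regular.

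The step I expect to be the main obstacle is the implication that an $\hh$-regular boundary near $q$ makes $q$ a regular point of $E$, i.e. that $\hh$-regularity rules out the tangent-cone type of singularity used to define $E_0$, so that $U$ genuinely sits inside the domain of Lemma~\ref{lem:federer}(iii). One would argue this from Lemma~\ref{lem:cones}(ii), using that near $q$ the set $E$ is the closure of an open set whose horizontal tangent cone is a proper half-space, so that curves into $E$ realize tangent directions with both signs of the $T$-component. Granting this, the remainder is a direct consequence of the first-variation formula \eqref{eq:1stvarlen} and the continuity statements of Propositions~\ref{prop:xicont}, \ref{prop:lacont} and \ref{prop:vcont} already packaged into Lemma~\ref{lem:federer}.
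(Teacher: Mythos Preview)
Your proposal is correct and follows essentially the same route as the paper's proof: use Lemma~\ref{lem:cones}(ii) to see that $\hh$-regularity of the boundary forces every point of $\xi(U)$ to be regular, apply Lemma~\ref{lem:federer}(iii) to get $\delta_E\in C_\hh^1(U)$, invoke Lemma~\ref{lem:federer}(ii) for non-vanishing of $\nabla_\hh\delta_E$, and conclude that $S_r\cap U$ is the level set of a $C_\hh^1$ function with nowhere-zero horizontal gradient. The paper dispatches the step you flagged as the main obstacle in one line (``$S_0$ is empty for an $\hh$-regular hypersurface since $\norh(E,q)$ is one-dimensional''), and it also omits the explicit verification that $S_r\cap U=\{\delta_E=r\}\cap U$, which you spell out; otherwise the arguments coincide.
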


\begin{proof}
Observe first that $S_0$ is empty for an $\hh$-regular hypersurface since $\norh(E,q)$ is one-dimensional for any $q\in S$ by Lemma~\ref{lem:cones}(ii). Lemma~\ref{lem:federer}(iii) implies that the function $\delta$ is in $C_\hh^1(\intt(\unp(E)\setminus E))$. From Lemma~\ref{lem:federer}(ii) we get $(\nabla_\hh\delta)_q\neq 0$ for every $q\in\intt(\unp(E)\setminus E)$. Hence $S_r\cap U$ is $\hh$-regular since it is the level set of a $C_\hh^1$ function with non-vanishing horizontal gradient.
\end{proof}

\medskip

Now we start the study of the regularity of the distance function to a closed set.
The following Lemma considers geodesics minimizing the distance to a closed set and would be essential for what follows

\begin{lemma}
\label{lem:mingeo}
Let $E\subset\hh^n$ be a closed set. Take $p\not\in E$ and $q\in E$ such that $\delta(p)=d(p,q)$. Let $\ga:[0,\delta(p)]\to\hh^n$ be a length-minimizing geodesic of curvature $\la$ joining $q$ and $p$. Then
\begin{enum}
\item $\dot{\ga}(0)\in\norh(E,q)$.
\item The curvature $\la$ of the geodesic $\ga$ lies in the interval
\begin{equation}
\label{eq:interval}
\bigg[\sup_{\substack{v\in\ttan(E,q) \\ \escpr{v,T_{q}}< 0}}\frac{-2\,\escpr{v,\dot{\ga}(0)}}{\escpr{v,T_{q}}},
\inf_{\substack{v\in\ttan(E,q) \\ \escpr{v,T_{q}}> 0}}\frac{-2\,\escpr{v,\dot{\ga}(0)}}{\escpr{v,T_{q}}}\bigg],
\end{equation}
where the left quantity is replaced by $-\infty$ if $\escpr{v,T_q}\ge 0$ for all $v\in\ttan(E,q)$, and the right quantity by $+\infty$ when $\escpr{v,T_q}\le 0$ for all $v\in\ttan(E,q)$.
\end{enum}
\end{lemma}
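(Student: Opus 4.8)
The plan is to derive both assertions from a single first--variation inequality obtained by deforming $\ga$ so that its initial point moves inside $E$ while its terminal point stays at $p$. Since $E$ is closed and $p\notin E$ we have $\delta(p)>0$, and by Pontryagin's Maximum Principle $\ga$ is a regular smooth horizontal curve parametrized by arc length with $\ga(0)=q$, $\ga(\delta(p))=p$, satisfying $\nabla_{\dot{\ga}}\dot{\ga}+\la\,J(\dot{\ga})=0$. The key observation is that any horizontal curve joining a point of $E$ to $p$ has length $\ge\delta(p)=L(\ga)$, so $\ga$ minimizes length among such constrained competitors, and hence the first variation of length for admissible deformations that keep the terminal point at $p$ and move the initial point inside $E$ must be nonnegative.

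First I would fix $v\in\ttan(E,q)$ which is the velocity at $q$ of a $C^1$ curve $c$ contained in $E$; the general case then follows because the inequality below is a closed condition, invariant under positive scaling and satisfied at $0$, so it propagates to the closure, i.e.\ to all of $\ttan(E,q)$. Extending $c$ arbitrarily to a $C^1$ curve on an open interval around $0$ and applying Lemma~\ref{lem:horcurves} with this curve as the initial boundary curve and the constant curve $p$ as the terminal one, I obtain a variation $\{\ga_\eps\}$ of $\ga$ by horizontal curves with $\ga_\eps(0)=c(\eps)$, $\ga_\eps(\delta(p))=p$, and variation field $U$ satisfying $U(0)=v$ and $U(\delta(p))=0$. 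For $\eps\ge 0$ we have $c(\eps)\in E$, hence $L(\ga_\eps)\ge d(c(\eps),p)\ge\delta(p)=L(\ga_0)$, so the right derivative of $\eps\mapsto L(\ga_\eps)$ at $0$ is nonnegative. Comparing with Lemma~\ref{lem:1stvarlen},
\[
0\le\frac{d}{d\eps}\bigg|_{\eps=0}L(\ga_\eps)=\escpr{U,\dot{\ga}+\tfrac{\la}{2}T_{\ga}}\big|_0^{\delta(p)}=-\escpr{v,\dot{\ga}(0)+\tfrac{\la}{2}T_q},
\]
so that
\begin{equation}
\label{eq:keystar}
\escpr{v,\dot{\ga}(0)}+\tfrac{\la}{2}\escpr{v,T_q}\le 0\qquad\text{for every }v\in\ttan(E,q).
\end{equation}

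Both parts are now immediate. For (i), since $\ga$ is horizontal we have $\dot{\ga}(0)\in\hhh_q$; applying \eqref{eq:keystar} to $u\in\ttanh(E,q)=\ttan(E,q)\cap\hhh_q$, for which $\escpr{u,T_q}=0$, gives $\escpr{\dot{\ga}(0),u}\le 0$, which is exactly $\dot{\ga}(0)\in\norh(E,q)$. For (ii), take $v\in\ttan(E,q)$ with $\escpr{v,T_q}\neq0$ and isolate $\la$ in \eqref{eq:keystar}: when $\escpr{v,T_q}>0$ we get $\la\le -2\,\escpr{v,\dot{\ga}(0)}/\escpr{v,T_q}$, and when $\escpr{v,T_q}<0$ division by the negative quantity reverses the sign and $\la\ge -2\,\escpr{v,\dot{\ga}(0)}/\escpr{v,T_q}$. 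Taking the infimum over the first family and the supremum over the second produces exactly the interval \eqref{eq:interval}, the stated $\pm\infty$ conventions covering the cases where one of the two families is empty; note that the interval automatically contains $\la$, so it is in particular nonempty.

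The main obstacle is entirely in the first step: ensuring Lemma~\ref{lem:horcurves} applies with a merely $C^1$ boundary curve and that the resulting horizontal family is a genuine variation to which Lemma~\ref{lem:1stvarlen} may be applied. One also has to reconcile the two--sided variations produced by Lemma~\ref{lem:horcurves} with the one--sided nature of tangent--cone directions, which is why $c$ is extended arbitrarily to negative parameters and only the inequality $L(\ga_\eps)\ge L(\ga_0)$ for $\eps\ge 0$ is used. Beyond that, everything reduces to elementary algebra once \eqref{eq:keystar} is in hand.
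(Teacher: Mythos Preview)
Your proof is correct and follows essentially the same approach as the paper: construct a one-sided variation of $\ga$ by horizontal curves with initial point moving along a curve in $E$ and terminal point fixed at $p$, apply the first variation formula (Lemma~\ref{lem:1stvarlen}) to obtain the inequality $\escpr{v,\dot{\ga}(0)+\tfrac{\la}{2}T_q}\le 0$ for every $v\in\ttan(E,q)$, and then read off (i) and (ii) by restricting to horizontal $v$ and by isolating $\la$ according to the sign of $\escpr{v,T_q}$. Your discussion of the one-sided derivative and of the passage from tangent vectors of curves to the full tangent cone by closure is slightly more explicit than the paper's, but the argument is the same.
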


\begin{proof}
Let $\alpha:[0,\eps_{0})\to E$ be a smooth curve with $\alpha(0)=q$, $\alpha'(0)=v$.  Using Lemma~\ref{lem:horcurves} we construct a variation of $\ga$ by smooth horizontal curves $\ga_{\eps}:[0,d(p,q)]\to\hh^n$ joining $\alpha(\eps)$ and $p$.  Let $U(s):=(\ptl\ga_{\eps}/\ptl\eps)(s)$. Consider the function $f(\eps):=L(\ga_{\eps})$.  As $U(0)=v$ and $U(d(p,q))=0$, equation \eqref{eq:1stvarlen} implies
\[
f'(0)=-\escpr{v,\dga(0)+\tfrac{\la}{2}\,T_{q}}.
\]
As $f(\eps)\ge\de(p)$ and $f(0)=\de(p)$ for $\eps>0$ we have $f'(0)\ge 0$. 
Hence we obtain
\begin{equation}
\label{eq:ineqf'}
0\le -\escpr{v,\dot{\ga}(0)+\tfrac{\la}{2}\,T_{q}}.
\end{equation}
By approximation inequality \eqref{eq:ineqf'} holds for any $v\in\ttan(E,q)$. In particular $\escpr{v,\dga(0)}\le 0$ for all $v\in\ttanh(E,q)$, which implies (i). From \eqref{eq:ineqf'} we also have
\[
\la\,\escpr{v,T_{q}}\le -2\,\escpr{v,\dga(0)},
\]
which implies (ii).
\end{proof}

In case the interval defined in \eqref{eq:interval} is empty, there are no minimizing geodesics joining a point outside $E$ with $q$. When $S$ is a Euclidean $C^1$ hypersurface, we can prove that the normal horizontal cone is generated by the outer horizontal unit normal $\nu$ and the interval in \eqref{eq:interval} is a single point.

\begin{theorem}
\label{thm:exp}
Let $E\subset\hh^n$ be a closed subset with $C^{1}$ boundary $S$.  Let $N$ be the outer unit normal to $S$ and $\nu$ the corresponding horizontal unit normal. Take $p\not\in E$ and $q\in S$ such that $\delta(p)=d(p,q)$, and consider a minimizing geodesic $\ga:[0,\delta(p)]\to\hh^n$ of curvature $\la$ connecting $q$ and $p$.  Then
\begin{enum}
\item $q$ is a regular point of $S$, 
\item $\dga(0)=\nu_{q}$, and 
\item the curvature of $\ga$ is given by
\begin{equation}
\label{eq:lamin}
\la=\frac{2\escpr{N,T}}{|N_{h}|}(q).
\end{equation}
\end{enum}
Moreover, in case $N$ is a Euclidean lipschitz vector field, the function $\la$ is locally lipschitz in $S\setminus S_{0}$.
\end{theorem}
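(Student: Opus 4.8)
The plan is to extract all four assertions from Lemma~\ref{lem:cones}(i), which identifies $\ttan(E,q)$ as the half-space $\{u\in T_q\hh^n:\escpr{u,N_q}\le 0\}$, together with Lemma~\ref{lem:mingeo}, which places $\dga(0)$ in $\norh(E,q)$ and confines $\la$ to the interval \eqref{eq:interval}. Note that $\ga$ is parameterized by arc length, so $\dga(0)$ is a \emph{unit} horizontal vector, and that $\ttanh(E,q)=\ttan(E,q)\cap\hhh_q=\{u\in\hhh_q:\escpr{u,N_q}\le 0\}$.

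For part (i) I would argue by contradiction. If $q$ is singular, then the half-space $\ttan(E,q)$, whose boundary is $T_qS$, is contained in a half-space bounded by $\hhh_q$; this is impossible unless $N_q=\pm T_q$, i.e.\ $(N_h)_q=0$. In that case $\escpr{u,N_q}=0$ for every $u\in\hhh_q$, so $\ttanh(E,q)=\hhh_q$ and hence $\norh(E,q)=\{0\}$, contradicting $\dga(0)\in\norh(E,q)$ (Lemma~\ref{lem:mingeo}(i)) together with $\dga(0)\ne 0$. Thus $q$ is regular, $|N_h|(q)>0$, and $\nu_q=(N_h)_q/|(N_h)_q|$ is defined.

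For parts (ii) and (iii), observe that for $u\in\hhh_q$ one has $\escpr{u,N_q}=\escpr{u,(N_h)_q}=|N_h|\escpr{u,\nu_q}$, so $\ttanh(E,q)=\{u\in\hhh_q:\escpr{u,\nu_q}\le 0\}$ and its polar cone in $\hhh_q$ is the ray $\norh(E,q)=\{\rho\,\nu_q:\rho\ge 0\}$ (this is also Lemma~\ref{lem:cones}(ii)). Since $\dga(0)\in\norh(E,q)$ has unit length, $\dga(0)=\nu_q$, which is (ii). For (iii) I would substitute $\dga(0)=\nu_q$ into \eqref{eq:interval}: writing $N_q=|N_h|\,\nu_q+\escpr{N,T}\,T_q$ and, for a tangent vector $v$, $a:=\escpr{v,\nu_q}$ and $b:=\escpr{v,T_q}$, the condition $v\in\ttan(E,q)$ reads $a\le-(\escpr{N,T}/|N_h|)\,b$, and conversely every pair $(a,b)$ satisfying this is realized by some $v$. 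Hence as $v$ ranges over $\ttan(E,q)$ with $b>0$ the ratio $-2a/b$ sweeps exactly $[\,2\escpr{N,T}/|N_h|,\,+\infty)$, and with $b<0$ it sweeps $(-\infty,\,2\escpr{N,T}/|N_h|\,]$; the interval in \eqref{eq:interval} thus reduces to the single point $2\escpr{N,T}/|N_h|(q)$, which forces $\la=2\escpr{N,T}/|N_h|(q)$, i.e.\ \eqref{eq:lamin}.

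Finally, for the Lipschitz assertion, note that on $S\setminus S_0$ one has $|N_h|=\sqrt{1-\escpr{N,T}^2}>0$, so $\la=\phi(\escpr{N,T})$ with $\phi(t)=2t/\sqrt{1-t^2}$ real-analytic on $(-1,1)$. If $N$ is Euclidean Lipschitz then $\escpr{N,T}$ is locally Lipschitz, because the change of frame from the Euclidean coordinate fields to $\{X_i,Y_i,T\}$ has polynomial, hence smooth, coefficients; and on any compact $K\subset S\setminus S_0$ the function $\escpr{N,T}$ stays in a compact subinterval of $(-1,1)$ on which $\phi$ is Lipschitz, so $\la=\phi\circ\escpr{N,T}$ is Lipschitz on $K$, hence locally Lipschitz on $S\setminus S_0$. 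The one place calling for a little care is the collapse of the interval in (iii): one must verify that the extreme value $2\escpr{N,T}/|N_h|$ is attained rather than merely approached — equivalently, that both one-sided families of test vectors are nonempty — which holds because each prescribed pair of components $(a,b)$ is realized by an admissible $v$.
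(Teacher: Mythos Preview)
Your proof is correct and follows essentially the same route as the paper: both invoke Lemma~\ref{lem:cones}(i) to identify $\ttan(E,q)$ and then apply Lemma~\ref{lem:mingeo} to force $\dga(0)\in\norh(E,q)$ and to collapse the interval \eqref{eq:interval} to a single point. Your handling of the collapse via the parametrization $(a,b)=(\escpr{v,\nu_q},\escpr{v,T_q})$ is the same computation as the paper's, which instead exhibits the explicit witness $v=-\escpr{N_q,T_q}\,\nu_q+|N_h|(q)\,T_q\in T_qS$ to show the bound is attained; and your composition argument $\la=\phi(\escpr{N,T})$ with $\phi(t)=2t/\sqrt{1-t^2}$ supplies more detail for the Lipschitz claim than the paper gives.
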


\begin{proof}
When $S$ is a Euclidean $C^{1}$ hypersurface and $q\in S$, the horizontal normal cone $\norh(E,q)$ is either $\{0\}$ when $q$ is a singular point, or $\{\mu\,\nu_{q}:\mu\ge 0\}$ when $q$ is regular. Lemma~\ref{lem:mingeo} then implies that $q$ must be a regular point and that $\dga(0)=\nu_{q}$. This proves (i) and (ii).

If $q\in S$ then $\ttan(E,q)=\{v\in T_{q}\hh^{n}: \escpr{v,N_{q}}\le 0\}$. Since $N=\mnh\,\nu+\escpr{N,T}\,T$ we obtain
\[
-\escpr{v,\nu_{q}}\,\mnh(q)\ge \escpr{v,T_{q}}\,\escpr{N_{q},t_{q}}
\]
for all $v\in\ttan(A,q)$. In case $\escpr{v,T_{q}}>0$, the above inequality implies
\begin{equation}
\label{eq:infge0}
\inf_{\substack{v\in\ttan(A,q) \\ \escpr{v,T_{q}}< 0}} \frac{-2\,\escpr{v,\nu_{q}}}{\escpr{v,T_{q}}}\ge \frac{2\escpr{N,T}}{\mnh}(q).
\end{equation}
Taking $v:=-\escpr{N_{q},T_{q}}\,\nu_{q}+\mnh(q)\,T_{q}$, we have $v\in T_{q}S\subset \ttan(A,q)$, $\escpr{v,T_{q}}>0$ and $-\escpr{v,\nu_{q}}\,\mnh(q)=\escpr{v,T_{q}}\,\escpr{N_{q},t_{q}}$, so that equality holds in \eqref{eq:infge0}. The case $\escpr{v,T_{q}}<0$ is handled in a similar way. Hence the interval \eqref{eq:interval} is reduced to the point $(2\,\escpr{N,T}/\mnh)(q)$. This proves (iii).
\end{proof}

\begin{remark}
Assume that $S$ is locally defined as a level set of a function $g:\Om\to\rr$ of class $C^1$ on an open set $\Om\subset\hh^1$. Then we may assume that $S\cap \Om=\{x : g(x)=0\}$ and a horizontal unit normal and a unit normal are given by
\[
\nuh=\frac{\nabla_hg}{|\nabla_hg|},\quad N=\frac{\nabla g}{|\nabla g|},
\]
where $\nabla_h$ is the orthogonal projection of the gradient $\nabla$ to the horizontal distribution. So we have
\[
\frac{2\escpr{N,T}}{|N_{h}|}=\frac{2\,T(g)/|\nabla g|}{|\nabla_hg|/|\nabla g|}=-\frac{[X,Y](g)}{|\nabla_{\hh}g|},
\]
which only depends on the horizontal derivatives of $g$.

The importance of the function $-[X,Y](g)/|\nabla_hg|$ for surfaces in $\hh^1$ was recognized by Arcozzi and Ferrari, who called it the imaginary curvature of $S$. The interested reader is referred to the detailed discussion in the introduction of \cite{MR2386836}. A remarkable property of the function $\la$ is its differentiability along tangent horizontal directions in minimal surfaces of class $C^1$ in $\hh^1$, see Lemma~4.4(3) in \cite{MR3406514}.
\end{remark}

Theorem~\ref{thm:exp} implies that the reach of a point approaches $0$ when we approach a singular point since the curvature of a minimizing geodesic approaches $\infty$ by \eqref{eq:lamin}. Hence the geodesic is minimizing in smaller and smaller intervals near the singular point.

\begin{corollary}
\label{cor:reach0}
Let $E\subset\hh^n$ be a closed set with $C^{1}$ boundary $S$. Then $\reach(E,q)$ approaches $0$ when $q\in S$ approaches the singular set $S_{0}$.
\end{corollary}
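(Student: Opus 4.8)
The plan is to show that near $S_0$ the curvature of the length-minimizing geodesics leaving $E$ blows up, and that this drives the reach to $0$. The ingredients are Theorem~\ref{thm:exp} --- if $p\notin E$, $q=\xi_E(p)$ is a regular point, and $\ga$ is a minimizing geodesic from $E$ to $p$, then $q$ is joined to $p$ by the \emph{unique} such geodesic, of curvature $\la(q):=\tfrac{2\escpr{N,T}}{\mnh}(q)$ --- together with the facts from \S\ref{sub:geodesics} that a geodesic of curvature $\la\neq0$ is length-minimizing exactly on intervals of length $2\pi/|\la|$, and that any two distinct geodesics of curvature $\la$ issuing from a common point meet again precisely at parameter $2\pi/|\la|$ (hence all of them pass through one common point there). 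I would first reduce the statement to proving that $\reach(E,q_0)=0$ for every $q_0\in S_0$, and then handle points $q$ merely near $S_0$ using the estimate $|\reach(E,q)-\reach(E,q')|\le d(q,q')$ (immediate from $B(q,r)\subset B(q',r+d(q,q'))$), which yields $\reach(E,q)\le d(q,S_0)\to0$ as $q\to S_0$.

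To prove $\reach(E,q_0)=0$ for $q_0\in S_0$, note first that $q_0$ is a limit of regular points $q_i\in S\setminus S_0$: since $\hhh$ is completely non-integrable, $S$ cannot be tangent to $\hhh$ along an open subset, so $S_0$ has empty interior in $S$. As $S$ is $C^1$ the unit normal $N$ is continuous and $N_{q_0}$ is vertical, so $\mnh(q_i)\to0$, $|\escpr{N,T}|(q_i)\to1$, and hence $|\la(q_i)|\to\infty$ by \eqref{eq:lamin}. I would then bound $\reach(E,q_i)\le r_i:=2\pi/|\la(q_i)|$. Put $p_i:=\ga_{q_i,\nu_{q_i}}^{\la(q_i)}(r_i)$; by the minimizing property $d(q_i,p_i)=r_i$, and by the refocusing property every $\ga_{q_i,v}^{\la(q_i)}$ with $v\in\hhh_{q_i}$, $|v|=1$ and $\escpr{v,\nu_{q_i}}\ge0$ also joins $q_i$ to $p_i$ at parameter $r_i$ and is length-minimizing. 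Granting that for $i$ large each of these geodesics stays in $\hh^n\setminus E$ on $(0,r_i]$ and that, consequently, $q_i$ realizes $\de(p_i)$, one argues: were $p_i$ in $\unp(E)$, then $\xi_E(p_i)=q_i$, and Theorem~\ref{thm:exp} would make the minimizing geodesic from $q_i$ to $p_i$ unique --- contradicting the family above, which contains geodesics with initial velocity $v\neq\nu_{q_i}$. Hence $p_i\notin\unp(E)$, so $\reach(E,q_i)\le d(q_i,p_i)=r_i\to0$; continuity of $\reach(E,\cdot)$ then gives $\reach(E,q_0)=0$, and the Lipschitz estimate of the first paragraph completes the proof.

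The step I expect to be the main obstacle is exactly the granted claim: that the short geodesics $\ga_{q_i,v}^{\la(q_i)}|_{[0,r_i]}$ (with $\escpr{v,\nu_{q_i}}\ge0$) stay in $\hh^n\setminus E$ and that no point of $S$ is nearer to $p_i$ than $q_i$. Transversality of the exit from $q_i$ controls these geodesics only for parameters near $0$; propagating this over the whole (very short, as $r_i\to0$) interval $[0,r_i]$ and excluding a ``shortcut'' to a far-away part of $S$ should follow from a quantitative use of the $C^1$ modulus of continuity of $S$ near $q_0$ --- for instance by comparing $S$ with its tangent hyperplane after an anisotropic dilation centered at $q_0$ --- and this is the part that needs genuine work; the rest is bookkeeping with Theorem~\ref{thm:exp} and the geometry of the Pansu spheres.
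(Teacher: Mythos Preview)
Your approach is considerably more elaborate than the paper's, and it contains a genuine logical gap.

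The paper's proof is one line: for regular $q$, one has $\reach(E,q)\le 2\pi/|\la(q)|=\pi\,|N_h(q)|/|\escpr{N_q,T_q}|$, and this quantity tends to $0$ as $q\to S_0$. Your reduction to singular points via the Lipschitz estimate on $\reach(E,\cdot)$ is unnecessary: to prove $\reach(E,q_0)=0$ you end up needing precisely the same bound $\reach(E,q_i)\le r_i$ for regular $q_i$ near $q_0$, so the detour buys nothing.

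The serious problem is in your argument for $\reach(E,q_i)\le r_i$. You grant that $p_i\notin E$ and that $q_i$ realizes $\delta_E(p_i)$, then assume $p_i\in\unp(E)$ and invoke Theorem~\ref{thm:exp} to force the minimizing geodesic from $q_i$ to $p_i$ to be unique. But Theorem~\ref{thm:exp} does \emph{not} require $p_i\in\unp(E)$: its only hypothesis on the pair is $\delta_E(p_i)=d(p_i,q_i)$, which is exactly what you have granted. So the contradiction---infinitely many length-minimizing geodesics $\ga_{q_i,v}^{\la(q_i)}$ from $q_i$ to $p_i$, each of which Theorem~\ref{thm:exp} would force to satisfy $\dot\ga(0)=\nu_{q_i}$---arises from the granted claim alone, independently of whether $p_i\in\unp(E)$. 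The valid conclusion is therefore that your granted claim is \emph{false} (either $p_i\in E$ or $\delta_E(p_i)<r_i$), not that $p_i\notin\unp(E)$. Neither alternative produces a point of $B(q_i,r_i)$ outside $\unp(E)$, so you cannot deduce $\reach(E,q_i)\le r_i$ along these lines. The ``main obstacle'' you flag is thus not a missing technical estimate but a symptom of this circularity: the claim you propose to grant can never hold.

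What the paper has in mind is more direct: by Theorem~\ref{thm:exp} any point that metrically projects to $q$ is joined to $q$ by the geodesic $\ga_{q,\nu_q}^{\la(q)}$, and that geodesic ceases to be length-minimizing past parameter $2\pi/|\la(q)|$; this is the sense in which ``the length of a minimizing geodesic leaving $q$'' bounds $\reach(E,q)$. No analysis of the full Pansu sphere of geodesics through $q_i$, nor any control on where they sit relative to $E$, is needed.
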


\begin{proof}
This follows easily since $\text{reach}(E,q)$ is no larger that the length of a minimizing geodesic leaving $q$, that is smaller than or equal to $2\pi/\la(q)=\pi (\mnh/\escpr{N,T})(q)$.
\end{proof}

For sets with local $C_\hh^1$ boundary we have

\begin{theorem}
\label{thm:ch1exp}
Let $E\subset\hh^n$ be a closed set with boundary $S$. Take $p\not\in E$ and $q\in S$ such that $\delta(p)=d(p,q)$, and consider a minimizing geodesic $\ga:[0,\delta(p)]\to\hh^n$ connecting $q$ and $p$. Assume that $S$ is $\hh$-regular near $q$. Then $\dga(0)=\nu_q$, where $\nu$ is the outer horizontal unit normal to $S$ at $q$.
\end{theorem}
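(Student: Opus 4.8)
The plan is to reduce the statement to the two structural results already proved: Lemma~\ref{lem:mingeo}(i), which places the initial velocity of a distance-minimizing geodesic in the horizontal normal cone $\norh(E,q)$, and Lemma~\ref{lem:cones}(ii), which, for an $\hh$-regular boundary, identifies that cone with the ray generated by the outer horizontal unit normal.

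First I would record the local structure of $E$ near $q$. Since $S=\ptl E$ is an $\hh$-regular hypersurface in a neighborhood of $q$, in the sense of Franchi--Serapioni--Serra-Cassano, there is a ball $B(q,r)$ and a function $f\in C_\hh^1(B(q,r))$ with non-vanishing horizontal gradient such that $S\cap B(q,r)=f^{-1}(0)$, $E\cap B(q,r)=f^{-1}((-\infty,0])$, and the outer horizontal unit normal is $\nu_q=(\nabla_\hh f)_q/|(\nabla_\hh f)_q|$ (here one uses that $E$, not its complement, is the sublevel side, after replacing $f$ by $-f$ if necessary). This is exactly the hypothesis under which the proof of Lemma~\ref{lem:cones}(ii) was carried out, so we conclude $\ttanh(E,q)=\{u\in\hhh_q:\escpr{u,\nu_q}\le 0\}$ and hence $\norh(E,q)=\{\rho\,\nu_q:\rho\ge 0\}$.

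Next I would apply Lemma~\ref{lem:mingeo}(i) to the length-minimizing geodesic $\ga:[0,\delta(p)]\to\hh^n$ joining $q$ and $p$: it gives $\dot{\ga}(0)\in\norh(E,q)$, so by the previous step $\dot{\ga}(0)=\rho\,\nu_q$ for some $\rho\ge 0$. Finally, $\ga$ is a minimizing geodesic, hence a regular horizontal curve parameterized by arc-length, so $\dot{\ga}(0)$ is a horizontal unit vector; taking norms in $\dot{\ga}(0)=\rho\,\nu_q$ and using $|\nu_q|=1$ forces $\rho=1$, that is $\dot{\ga}(0)=\nu_q$.

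The argument is essentially immediate once these tools are in place; the only point that deserves a moment of care is the first step, namely verifying that the $\hh$-regularity hypothesis really does put us in the situation of Lemma~\ref{lem:cones}(ii) — in particular that the relevant generator of $\norh(E,q)$ is the \emph{outer} horizontal normal and not its opposite, which is what pins down the sign and lets us identify $\dot{\ga}(0)$ with $\nu_q$ exactly. In contrast to Theorem~\ref{thm:exp}, the merely $C_\hh^1$ regularity of $S$ gives no control over the function $\escpr{N,T}/|N_h|$, so nothing can (and nothing is) claimed here about the curvature $\la$ of $\ga$; correspondingly, the curvature interval of Lemma~\ref{lem:mingeo}(ii) plays no role in this proof.
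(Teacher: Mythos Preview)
Your proof is correct and follows essentially the same approach as the paper: apply Lemma~\ref{lem:mingeo}(i) to get $\dot{\ga}(0)\in\norh(E,q)$, use Lemma~\ref{lem:cones}(ii) to identify $\norh(E,q)$ with the ray $\{\rho\,\nu_q:\rho\ge 0\}$, and conclude by comparing norms. The paper's own proof is just these three sentences, without the additional commentary you provide on verifying the $\hh$-regularity hypothesis and on why no curvature information is obtained.
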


\begin{proof}
We make use again of Lemma~\ref{lem:mingeo} to conclude that $\dga(0)\in\norh(E,q)$. Lemma~\ref{lem:cones}(ii) implies that $\norh(E,q)=\{\rho \nu_q:\rho\ge 0\}$. Since both $\dga(0)$ and $\nu_q$ are unit vectors we obtain $\dga(0)=\nu_q$.
\end{proof}

Theorem~\ref{thm:exp} allows to describe geometrically  the metric projection and the distance function to some simple closed sets in $\hh^n$.

\begin{example}
\label{ex:vertplane}
Let $E:=\{x_1\le 0\}\subset\hh^n$. The tangent space at every point of every point of $S=\ptl E$ is generated by $\{Y_1,X_2,Y_2,\ldots,X_n,Y_n,T\}$. This implies that the outer unit normal $N$ is given by $X_1$, which coincides with $\nu_S$. Since $\escpr{N,T}\equiv 0$, Theorem~\ref{thm:exp} yields that the curvature of any minimizing geodesic leaving from $S$ is $0$. So minimizing geodesics are straight lines tangent to $X_1$. Given $p\not\in E$ with coordinates $(x_1,y_1,\ldots,x_n,y_n,t)$, a simple computation implies that the only point in $S$ at minimum distance from $p$ is $(0,y_1,\ldots,x_n,y_n,t-x_1y_1)$. Since
\[
(0,y_1,\ldots,x_n,y_n,t-x_1y_1)+x_1\,(1,0,\ldots,0,y_1)=(x_1,y_1,\ldots,x_n,y_n,t),
\]
we conclude $\delta_E(p)=x_1$. In this case the reach of $E$ is $+\infty$. Moreover, the distance function $\delta_E$ is $C^\infty$ (analytic) out of the set $E$.
\end{example}

\begin{example}[Behaviour near an isolated singular point]
\label{ex:isolated}
We denote by $(x,y,t)$ the coordinates in $\hh^1$. Let $E:=\{t\le 0\}\subset\hh^1$. The tangent plane at a boundary point $(x,y,0)$ is spanned by $X-yT$ and $Y+xT$, and so it is never horizontal unless $x=y=0$. Hence $E_0=\{(0,0,0)\}$. The outer unit normal to $S=\ptl E$ is given by
\[
N=\frac{yX-xY+T}{\sqrt{1+x^2+y^2}},
\]
and the horizontal unit normal on $S\setminus S_0$ by
\[
\nu_S=\frac{yX-xY}{\sqrt{x^2+y^2}}.
\]
Theorem~\ref{thm:exp} implies that the curvature of a minimizing geodesic starting from a point $(x_0,y_0,0)$, with $r_0^2=x_0^2+y_0^2\neq 0$ is given by
\[
2\,\frac{\escpr{N,T}}{\mnh}=\frac{2}{r_0}.
\]
The projection of this geodesic to the $xy$-plane is, by the discussion in \S~\ref{sub:geodesics}, a circle of radius $r_0/2$. Its initial velocity is given by the projection of $\nu_S$ to the $xy$ plane, and it is given by $y_0\tfrac{\ptl}{\ptl x}-x_0\tfrac{\ptl}{\ptl y}$. Such circles always contain the origin.

The horizontal liftings of these circles are horizontal geodesics containing points in the positive part of the $t$-axis. Any geodesic starting from a point $(x_0,y_0,0)$ in the circle $x_0^2+y_0^2=r_0^2$ with initial velocity $y_0\tfrac{\ptl}{\ptl x}-x_0\tfrac{\ptl}{\ptl y}$ reaches the $t$-axis after a distance $\pi r_0/2$ at the point $(0,0,\pi r_0^2/4)$ (twice half of the area of the disk of radius $r/2$). Observe that the point $(0,0,\pi r_0^2/4)$ is only reached by the geodesics described above, and so they are geodesics realizing the distance. Since an infinite number of geodesics reach this point, they are not minimizing in larger intervals. On the other hand, smaller segments minimize the distance to $E$. In particular, setting $t=\pi r_0^2/4$, we have $\pi r_0/2=(4t/\pi)^{1/2}(\pi/2)=\pi^{1/2}t^{1/2}$ and so
\[
\delta_E((0,0,t))=\pi^{1/2}t^{1/2}.
\]

To explicitly calculate the distance to the set $E$, we compute from \eqref{eq:geodesics} the geodesics with initial conditions $(x_0,y_0,0)$, $(A,B)=(y_0,-x_0)$ and curvature $2r_0^{-1}$ to obtain
\begin{align*}
x(s)&=\frac{1}{2}\big(x_0+y_0\sin(2r_0^{-1}s)+x_0\cos(2r_0^{-1}s)\big),
\\
y(s)&=\frac{1}{2}\big(y_0+y_0\cos(2r_0^{-1}s)-x_0\sin(2r_0^{-1}s)\big),
\\
t(s)&=\frac{r_0}{2}\big(s+\frac{r_0}{2}\sin(2r_0^{-1}s)\big).
\end{align*}
We know that $\delta_E((x(s),y(s),t(s)))=s$. The distance should only depend on $x^2+y^2$ and $t$. Indeed
\begin{align*}
x(s)^2+y(s)^2&=\frac{1}{2}\big(r_0^2+r_0^2\cos(2r_0^{-1}s)\big),
\\
t(s)&=\frac{r_0}{2}\big(s+\frac{r_0}{2}\sin(2r_0^{-1}s)\big).
\end{align*}
\end{example}

\begin{example}[Behaviour near a singular curve]
\label{ex:curve}
Let $E:=\{t\le xy\}\subset\hh^1$ be the subgraph of the function $u(x,y)=xy$, and $S=\ptl E$ be the graph of $u$. The tangent plane at every boundary point $(x,y,xy)$ is generated by $X$ and $Y+2xT$. Hence the outer unit normal and the corresponding horizontal unit normal are given by
\[
N=\frac{-2xY+T}{\sqrt{1+4x^2}},\qquad \nu_S=-\frac{x}{|x|}\,Y.
\]
The singular set is $S_0=\{(0,y,0):y\in\rr\}$. Its projection to the $xy$-plane is $x=0$. Given a point $p_0\in S\setminus S_0$ with coordinates $(x_0,y_0,x_0y_0)$, $x_0\neq 0$, Theorem~\ref{thm:exp} implies that the curvature of a minimizing geodesic leaving from $p$ is given by
\[
2\,\frac{\escpr{N,T}}{\mnh}(p_0)=\frac{1}{|x_0|}.
\]
So the projection of this geodesic to the $xy$-plane is a circle of radius $|x|$ with initial velocity $-\text{sgn}(x)\,\tfrac{\ptl}{\ptl y}$. We can compute from \eqref{eq:geodesics} the geodesics with initial conditions $(x_0,y_0,x_0y_0)$, $(A,B)=(0,-\text{sgn}(x_0))$ and curvature $|x_0|^{-1}$ to obtain
\begin{align*}
x(s)&=x_0\cos(|x_0|^{-1}s),
\\
y(s)&=y_0-x_0\sin(|x_0|^{-1}s),
\\
t(s)&=x_0y_0\cos(|x_0|^{-1}s)+|x_0|^{-1}x_0^2\,s.
\end{align*}
Such geodesics reach the plane with equation $x=0$ at time $s=\tfrac{\pi}{2}|x_0|$. For such a value, $x(s)=0$, $y(s)=y_0-x_0$ and $t(s)=\tfrac{\pi}{2}x_0^2$. Hence the point $(0,y_0-x_0,\tfrac{\pi}{2}x_0^2)$ is only reached by two geodesics of the same length: the one with initial condition $(x_0,y_0,x_0y_0)$, $B=(0,-\text{sgn}(x_0))$ and curvature $|x_0|^{-1}$ and the one with initial condition $(-x_0,y_0-2x_0,2x_0^2-x_0y_0)$, $(A,B)=(0,-\text{sgn}(-x_0))$ and curvature $|x_0|^{-1}$. This implies that both geodesics are minimizing.

Observe that $\delta_E((0,0,t))=(2\pi^{-1})^{1/2}t^{1/2}$.
\end{example}

\section{Regularity properties of the distance function to a submanifold}
\label{sec:distsm}

In this section we shall consider the regularity properties of the distance function to an $m$-dimensional submanifold $S\subset\hh^n$ of class $C^k$, with $k\ge 2$. Similar results in Euclidean spaces were obtained by Gilbarg and Trudinger \cite[\S~14.6]{MR1814364} and Hörmander \cite[p.~50]{MR0203075}. The reader is referred to Krantz and Parks monograph \cite[\S~4.4]{MR1894435} for historical background and references.

Given a point $q$ in a submanifold $S$ of class $C^1$, the tangent space $\ttan(S,q)$, as defined in the previous section, coincides with the classical tangent space $T_qS$ to the submanifold $S$. Hence a point $q$ is singular if and only if $T_qS\subset\hhh_q$. As usual, we denote the set of singular points in $S$ by $S_0$. The set $S_0$ is a closed subset of $S$. If $S\subset\hh^n$ is a hypersurface of class $C^1$, then the set $S_0$ coincides with the set of points where $T_qS=\hhh_q$.

\begin{lemma}
Let $S\subset \hh^n$ be an $m$-dimensional submanifold of class $C^1$. Let $p\not\in S$ and assume that $q\in S$ satisfies $\delta_S(p)=d(p,q)$. 

Then there exists a length-minimizing geodesic $\ga:[0,\delta(p)]\to\hh^n$ of curvature $\la$, parameterized by arc-length, joining $q$ and $p$ such that
\begin{enumerate}
\item[(i)] $\dga(0)\perp T_q S\cap\hhh_q$.
\item[(ii)] If $q\in S\setminus S_0$, the curvature $\la$ of $\ga$ is given by
\[
\la=\frac{2\escpr{E_q,\dga(0)}}{\escpr{E_q,T_q}},
\]
where $E_q\in T_qS$ is a unit vector orthogonal to $T_qS\cap\hhh_q$.
\end{enumerate}
\end{lemma}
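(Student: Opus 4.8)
The plan is to run the variational argument underlying Lemma~\ref{lem:mingeo} and Theorem~\ref{thm:exp}, taking advantage of the fact that for a $C^1$ submanifold the tangent cone $\ttan(S,q)$ is the full linear subspace $T_qS$, so that the one-sided inequalities available for arbitrary closed sets become two-sided identities. First I would fix a length-minimizing geodesic $\ga:[0,\delta_S(p)]\to\hh^n$ joining $q$ to $p$: such a curve exists by the Hopf--Rinow theorem, and by the Pontryagin Maximum Principle it is a smooth horizontal curve satisfying \eqref{eq:geodesic} for some curvature $\la$, so that the statement is meaningful. Given $v\in T_qS$, I would then choose a $C^1$ curve $\alpha:[0,\eps_0)\to S$ with $\alpha(0)=q$ and $\alpha'(0)=v$, and use Lemma~\ref{lem:horcurves} to build a variation $\{\ga_\eps\}$ of $\ga$ by horizontal curves with $\ga_\eps(0)=\alpha(\eps)$ and $\ga_\eps(\delta_S(p))=p$. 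Since $L(\ga_\eps)\ge\delta_S(p)=L(\ga)$, with equality at $\eps=0$, we get $\frac{d}{d\eps}\big|_{\eps=0}L(\ga_\eps)\ge 0$, and Lemma~\ref{lem:1stvarlen} evaluates this derivative as $-\escpr{v,\dga(0)+\tfrac{\la}{2}\,T_q}$. Because $v$ ranges over the whole subspace $T_qS$, so that both $v$ and $-v$ are admissible, the inequality is forced to be an identity:
\[
\escpr{v,\dga(0)+\tfrac{\la}{2}\,T_q}=0\qquad\text{for every }v\in T_qS.
\]

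Both conclusions then follow by specializing $v$. For (i), taking $v\in T_qS\cap\hhh_q$ gives $\escpr{v,T_q}=0$, hence $\escpr{v,\dga(0)}=0$; thus $\dga(0)\perp T_qS\cap\hhh_q$. For (ii), assume $q\in S\setminus S_0$, so the linear functional $w\mapsto\escpr{w,T_q}$ is not identically zero on $T_qS$, i.e.\ $T_qS\cap\hhh_q$ has codimension one in $T_qS$; pick a unit vector $E_q\in T_qS$ spanning its orthogonal complement in $T_qS$. If $E_q$ were horizontal it would lie in $T_qS\cap\hhh_q$ and be orthogonal to it, hence vanish; so $E_q\notin\hhh_q$ and $\escpr{E_q,T_q}\neq 0$. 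Putting $v=E_q$ in the displayed identity yields $\escpr{E_q,\dga(0)}+\tfrac{\la}{2}\escpr{E_q,T_q}=0$, which determines $\la$ uniquely---the value being unchanged if $E_q$ is replaced by $-E_q$, and, in the hypersurface case where $E_q$ is proportional to $\escpr{N,T}\,\nuh-\mnh\,T$, reducing to \eqref{eq:lamin} of Theorem~\ref{thm:exp}.

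The one genuinely essential use of the submanifold hypothesis---the step I would single out as the crux---is this upgrade from inequality to identity: for an arbitrary closed set the tangent cone is only a convex cone, which is precisely why Lemma~\ref{lem:mingeo} can merely place $\dga(0)$ in $\norh(E,q)$ and confine $\la$ to the interval \eqref{eq:interval}, whereas the linearity of $T_qS$ collapses that interval to a single point and forces $\dga(0)$ to be orthogonal to $T_qS\cap\hhh_q$. I would also note, for completeness, that the construction of Lemma~\ref{lem:horcurves} and the first-variation identity of Lemma~\ref{lem:1stvarlen} are purely local near $q$, so that no global hypothesis on $S$ is needed beyond the existence of the nearest point $q$, which is part of the assumptions.
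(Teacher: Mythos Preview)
Your argument is correct and follows essentially the same route as the paper: use the first-variation identity \eqref{eq:1stvarlen} together with the fact that $T_qS$ is a linear subspace (so both $v$ and $-v$ are tangent) to turn the inequality of Lemma~\ref{lem:mingeo} into the identity $\escpr{v,\dga(0)+\tfrac{\la}{2}T_q}=0$ for all $v\in T_qS$, and then read off (i) and (ii). Your write-up is in fact more careful than the paper's, which jumps directly to the identity; note also that the formula you derive has the opposite sign to the one displayed in the statement, matching instead the paper's own proof and the subsequent definition \eqref{eq:deflambda}.
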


\begin{proof}

Since $\ga$ is length-minimizing, equation \eqref{eq:1stvarlen} implies
\[
0=\escpr{u,\dga(0)+\frac{\la}{2}\,T_q}
\]
for any tangent vector $u\in T_qS$. This immediately implies (i). If $q\in S\setminus S_0$, then $\dga(0)$ is orthogonal to $T_qS\cap\hhh_q$. If $q$ is regular, there exists a unit vector $E_q\in T_qS$ orthogonal to $T_qS\cap\hhh_q$, unique up to sign. The above equation implies
\[
\la=-2\frac{\escpr{E_q,\dga(0)}}{\escpr{E_q,T_q}},
\]
which proves (ii).
\end{proof}

For a regular point $q\in S\setminus S_0$ and $v\in\hhh_q$ we define
\begin{equation}
\label{eq:deflambda}
\la(q,v):=-2\frac{\escpr{E_q,v}}{\escpr{E_q,T_q}}.
\end{equation}

For $q$ regular and $v$ horizontal and orthogonal to $T_qS\cap\hhh_q$, we define the map
\begin{equation}
\label{eq:exps}
\exp_S(q,v):=\ga_{q,v}^{\la(q,v)}(1),
\end{equation}
where $\ga_{q,v}^{\la(q,v)}=\ga$ is the sub-Riemannian geodesic of curvature $\la(q,v)$ and initial conditions $\ga(0)=q$, $\ga'(0)=v$.
When $v\neq 0$ is horizontal and orthogonal to $T_qS\cap\hhh_q$, we have
\[
\exp_S(q,v)=\ga_{p,v/|v|}^{\la(q,v/|v|)}(|v|),
\]
that coincides with the smooth geodesic starting from $q$ with initial speed $v/|v|$, parameterized by arc-length and (possibly) minimizing the distance to $S$, evaluated at $|v|$.

Our main result in this section is the following

\begin{theorem}
\label{thm:tube}
Let $S\subset\hh^n$ be a closed $m$-dimensional submanifold of class $C^k$, where $k\ge 2$ and $1\le m\le 2n$, and let $K\subset S\setminus S_0$ be a compact set. Then $\reach(S,K)>0$. Moreover, for $0<r<\reach(S,K)$, the distance function $\delta_S$ is of class $C^k$ in $(\xi_S^{-1}(K)\setminus K)\cap S_r$, where $S_r=\{p\in\hh^n:\delta_S(p)<r\}$.
\end{theorem}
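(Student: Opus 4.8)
The strategy is the classical one for tubular-neighbourhood theorems, adapted to the sub-Riemannian setting via the exponential map $\exp_S$ defined in \eqref{eq:exps}. The key observation is that, on $S\setminus S_0$, the map $q\mapsto\la(q,v)$ from \eqref{eq:deflambda} is of class $C^{k-1}$ in $q$ and linear (hence smooth) in $v$, because the unit vector $E_q$ orthogonal to $T_qS\cap\hhh_q$ inside $T_qS$ depends on $q$ with one derivative less than $S$ itself, and $\escpr{E_q,T_q}\neq 0$ precisely because $q$ is regular. Consequently the map
\[
\Phi\colon (q,v)\mapsto \exp_S(q,v),\qquad q\in S\setminus S_0,\ v\in (T_qS\cap\hhh_q)^{\perp}\cap\hhh_q,
\]
is of class $C^{k-1}$ on the total space of the rank-$(2n-m+1)$ bundle over $S\setminus S_0$, by the explicit formulas \eqref{eq:geodesics}--\eqref{eq:geodesic2} for geodesics, which are analytic in $(p,v,\la,s)$.

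\textbf{Step 1: the differential of $\Phi$ along the zero section is an isomorphism.}
Fix $q\in K$. I would compute $d\Phi_{(q,0)}$ on $T_qS$ and on the normal fibre $(T_qS\cap\hhh_q)^{\perp}\cap\hhh_q$, together with the missing direction. On the zero section $\Phi(q,0)=q$, so the restriction of $d\Phi_{(q,0)}$ to $T_qS$ is the identity. Differentiating in the fibre directions, and using that $\Phi(q,sv)=\ga_{q,v}^{\la(q,v)}(s)$, the derivative at $s=0$ is $v$ itself; these $2n-m$ horizontal directions span $(T_qS\cap\hhh_q)^{\perp}\cap\hhh_q$, which is transverse to $T_qS$ inside $T_q\hh^n$ \emph{except} for the Reeb component. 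The remaining direction $T_q$ is recovered because $\ptl_s\escpr{\ga_{q,v}^{\la},T}|_{s=0}$ involves $\escpr{v,J(\cdot)}$ and the vertical part of the geodesic grows like $|v|^2 H(\la s)$; more directly, one uses that $E_q=\escpr{N,T}\nuh-|N_h|T$-type vectors already lie in $T_qS$ and carry a nonzero $T$-component, so the image of $d\Phi_{(q,0)}$ together with $T_qS$ spans all of $T_q\hh^n$. Hence $d\Phi_{(q,0)}$ is an isomorphism onto $T_q\hh^n$.

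\textbf{Step 2: uniform inverse function theorem and positive reach.}
By the inverse function theorem applied at each $q\in K$, and a compactness argument over $K$ (the constants in the inverse function theorem depend continuously on $q$ through the $C^{k-1}$ data, and $K$ is compact), there is $r_0>0$ such that $\Phi$ is a $C^{k-1}$ diffeomorphism from a uniform neighbourhood $\{(q,v):q\in K,\ |v|<r_0\}$ onto an open neighbourhood $\Om$ of $K$ in $\hh^n$. I would then show $\reach(S,K)\geq r_0$ (after possibly shrinking): any point $p\in\Om\setminus S$ has a nearest point on $S$, which by the Lemma preceding \eqref{eq:deflambda} lies at the foot of a geodesic of curvature $\la(q,v)$ with $\dga(0)\perp T_qS\cap\hhh_q$, i.e.\ $p=\Phi(q,v)$; injectivity of $\Phi$ forces $q$ to be unique, so $p\in\unp(S)$ and $\xi_S(p)=q$. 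Here one must also use that for $|v|<r_0$ small the geodesic $\ga_{q,v}^{\la(q,v)}$ is still length-minimizing to $S$ (its length $|v|$ is below the cut distance $2\pi/|\la|$, and $\la$ is bounded on $K$), so no competitor from outside $\Om$ can be closer; this is where Corollary~\ref{cor:reach0}-type control and continuity of $\la$ on $K$ enter.

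\textbf{Step 3: regularity of $\delta_S$ is one order better than $\Phi$.}
On $(\xi_S^{-1}(K)\setminus K)\cap S_r$ we have $\delta_S(p)=|v(p)|$ where $(q(p),v(p))=\Phi^{-1}(p)$, which is only $C^{k-1}$; the gain to $C^k$ comes from the eikonal structure. By Lemma~\ref{lem:federer}(ii)--(iii), $\delta_S\in C_\hh^1$ on the interior of $\unp(S)\setminus S$ projecting to regular points, and $(\nabla_\hh\delta_S)_p=\ga'(\delta_S(p))$; moreover $\ga'(\delta_S(p))=\Phi$-velocity, which is a $C^{k-1}$ function of $p$, so a priori $\delta_S\in C_\hh^{k}$ only formally. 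To upgrade to genuine $C^k$ in the Euclidean sense I would differentiate the identity $|\nabla_\hh\delta_S|\equiv 1$ together with the transport equation satisfied by $\delta_S$ along the geodesic flow: $\delta_S$ is constant $=|v|$ on each leaf, its horizontal gradient is the (unit, $C^{k-1}$) geodesic velocity field, and bootstrapping the first-order system $\nabla_\hh\delta_S=V$, where $V$ is the $C^{k-1}$ velocity vector field whose flow is $\Phi$, shows $\delta_S$ has $k$ Euclidean derivatives; the full (non-horizontal) $t$-derivative is then controlled because $T\delta_S$ can be expressed through $\escpr{V,T}$ and the already-known horizontal derivatives via the bracket relation \eqref{eq:liebracket}. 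This bootstrap, and checking it loses no derivative, is the technical heart of the argument.

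\textbf{Main obstacle.}
The delicate point is Step 3: showing that $\delta_S$ is $C^k$ and not merely $C^{k-1}$, i.e.\ that the one-derivative loss incurred by passing from $S$ to the vector field $E_q$ (and hence to $\la$ and to $\Phi$) is recovered in the distance function. In the Riemannian case this is standard because the exponential map depends on the same number of derivatives as the metric; here the curvature $\la$ of the minimizing geodesic is determined by $E_q$, so $\exp_S$ is genuinely only $C^{k-1}$, and one must exploit that $\delta_S$ solves the sub-Riemannian eikonal equation to buy back the lost derivative. A secondary technical nuisance is the uniform (in $q\in K$) control of the size of the neighbourhood on which $\Phi$ is a diffeomorphism, which requires that the $C^{k-1}$ norms of the data be bounded on the compact set $K\subset S\setminus S_0$ — unproblematic but needs to be stated carefully since $\la\to\infty$ near $S_0$.
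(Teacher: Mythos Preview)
Your Steps 1 and 2 are essentially the paper's approach: the paper proves a Tubular Neighbourhood Lemma showing $\Phi$ is a $C^{k-1}$ diffeomorphism near the zero section by the same differential computation you sketch, and then obtains $\reach(S,K)>0$ by a short contradiction argument (two distinct footpoints $q_i\neq q_i'$ for $p_i$ with $\delta(p_i)\to 0$ would, after passing to subsequences converging to a point of $K$, violate injectivity of $\Phi$). Your uniform inverse-function-theorem version is equivalent.

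Where you diverge from the paper is Step 3, and here you are making your life much harder than necessary. You correctly note that $\Phi^{-1}$ is only $C^{k-1}$ and worry about recovering one derivative by bootstrapping the horizontal eikonal equation and then controlling the $T$-derivative via the bracket relation. The paper bypasses all of this with a single observation: the \emph{full Riemannian} gradient of $\delta_S$ (not just the horizontal one) is explicitly given by the first variation formula \eqref{eq:1stvarlen},
\[
(\nabla\delta_S)_p=\dot{\ga}_{\xi(p),v(p)}^{\la(\xi(p),v(p))}(\delta_S(p))+\frac{\la(\xi(p),v(p))}{2}\,T_p,
\]
because the variation of length when the endpoint moves in direction $u$ is $\escpr{u,\dot\ga+\tfrac{\la}{2}T}$ at the endpoint. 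The right-hand side is an analytic function of the $C^{k-1}$ data $\xi(p)$, $v(p)$, $\la(\xi(p),v(p))$, $\delta_S(p)$, so $\nabla\delta_S$ is $C^{k-1}$ in the Euclidean sense, and therefore $\delta_S$ is $C^k$. No bootstrapping, no bracket tricks, no separate handling of the vertical derivative: the extra $\tfrac{\la}{2}T$ term in the first variation formula is precisely what hands you the $T$-component of $\nabla\delta_S$ for free. This is the idea you are missing, and it is why the paper's Step 3 is two lines rather than a technical heart.
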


For the proof of this result we shall need the following

\begin{lemma}[Tubular Neighborhood Lemma]
\label{lem:tube}
Let $S\subset\hh^n$ be a closed $m$-dimensio\-nal submanifold of class $C^k$, where $k\ge 2$ and $1\le m\le 2n$, and let $K\subset S\setminus S_0$ be a compact set. Let $Q=2n+1$. Then, for each point $q\in K$, there exists a neighborhood $U$ of $q$ in $S$ and an orthonormal family of $(Q-m)$ horizontal $C^{k-1}$ vector fields $X_i:U\to \hh^n$ such that the map $\Phi:U\times\rr^{Q-m}\to\hh^n$ defined by 
\begin{equation}
\label{eq:Phi}
\Phi(x,y)=\exp_S(x,\sum_{i=1}^{Q-m} y_iX_i)
\end{equation}
is a $C^{k-1}$ diffeomorphism in a neighborhood of $U\times\{0\}$.
\end{lemma}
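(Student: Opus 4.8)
The plan is to build, near $q$, a frame adapted to $S$, show that the resulting map $\Phi$ is $C^{k-1}$, compute $d\Phi_{(q,0)}$ and check it is an isomorphism, and then invoke the Inverse Function Theorem. I would start with the frame. Since $q$ is regular, $\theta$ does not vanish on $T_qS$, i.e.\ $T_qS\not\subset\hhh_q$; this is an open condition, so on a neighborhood $U$ of $q$ in $S$ the subspace $T_xS\cap\hhh_x=\ker(\theta|_{T_xS})$ has constant dimension $m-1$. Because $S$ is of class $C^k$, the bundle $TS$ and the restricted form $\theta|_{TS}$ are $C^{k-1}$, so (after shrinking $U$) the rank $m-1$ subbundle $x\mapsto T_xS\cap\hhh_x$, its orthogonal complement inside $T_xS$ (a line bundle), and its orthogonal complement inside $\hhh_x$ (rank $Q-m$) are all of class $C^{k-1}$. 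Applying Gram--Schmidt to $C^{k-1}$ local frames I obtain on $U$ a $C^{k-1}$ unit section $E$ of the first complement — note $\escpr{E(x),T}$ never vanishes, since a unit vector of $T_xS$ orthogonal to $T_xS\cap\hhh_x$ cannot lie in $\hhh_x$, else it would be orthogonal to itself — and a $C^{k-1}$ orthonormal frame $X_1,\dots,X_{Q-m}$ of $(T_xS\cap\hhh_x)^{\perp}\cap\hhh_x$. By \eqref{eq:deflambda} and the curvature formula in the Lemma preceding Theorem~\ref{thm:tube}, the curvature function $\la(q,v)=-2\,\escpr{E_q,v}/\escpr{E_q,T_q}$ is then $C^{k-1}$ in the base point, linear in $v$, with nowhere-vanishing denominator on $U$.

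Next I would check that $\Phi$ in \eqref{eq:Phi} is $C^{k-1}$. Writing $v(x,y)=\sum_i y_iX_i(x)$, equation \eqref{eq:exps} reads $\Phi(x,y)=\ga_{x,v(x,y)}^{\la(x,v(x,y))}(1)$; here $\la(x,v(x,y))$ is linear in $y$, so there is no division by $|v|$ and the composition is licit even along $y=0$. By \eqref{eq:geodesic2} (equivalently the explicit integration \eqref{eq:geodesics}), for a fixed base point the geodesic $\ga_{x,w}^{\mu}(s)$ is real-analytic, hence $C^{\infty}$, jointly in $(x,w,\mu,s)$ — the functions $F,G,H$ of \eqref{eq:FGH} being entire — and extends analytically across $w=0$, with value $x$ there. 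Thus $\Phi$ is the composition of this analytic map with the $C^{k-1}$ map $(x,y)\mapsto(x,v(x,y),\la(x,v(x,y)))$, hence $\Phi$ is $C^{k-1}$ on $U\times\rr^{Q-m}$, and $\Phi(x,0)=x$.

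Finally I would compute $d\Phi_{(q,0)}\colon T_qS\oplus\rr^{Q-m}\to T_q\hh^n$. On $T_qS\times\{0\}$ it is the inclusion $T_qS\hookrightarrow T_q\hh^n$, since $\Phi(\cdot,0)$ is the inclusion of $U$. For the $y$-directions, \eqref{eq:geodrelation} gives $y\mapsto\exp_S(q,yX_j(q))=\ga_{q,X_j(q)}^{\la(q,X_j(q))}(y)$, whose $y$-derivative at $0$ is the initial velocity $X_j(q)$, so $d\Phi_{(q,0)}(\ptl_{y_j})=X_j(q)$. Hence the image of $d\Phi_{(q,0)}$ is $T_qS+\mathrm{span}\{X_1(q),\dots,X_{Q-m}(q)\}$, a sum of spaces of dimensions $m$ and $Q-m$ with $m+(Q-m)=Q=\dim T_q\hh^n$; it is all of $T_q\hh^n$ because $T_qS\cap\mathrm{span}\{X_j(q)\}=\{0\}$ — such a vector lies in $\hhh_q$, hence in $T_qS\cap\hhh_q$, yet is orthogonal to $T_qS\cap\hhh_q$ by construction of the $X_j$, so it vanishes. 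Therefore $d\Phi_{(q,0)}$ is an isomorphism, and the Inverse Function Theorem provides a neighborhood of $(q,0)$ on which $\Phi$ is a $C^{k-1}$ diffeomorphism onto an open set; this neighborhood contains some $U'\times B(0,\eps)$ with $U'$ a neighborhood of $q$ in $S$, and renaming $U'$ as $U$ completes the proof.

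I expect the main obstacle to be the regularity bookkeeping: one must argue carefully that regularity of $q$ forces $x\mapsto T_xS\cap\hhh_x$ to be a constant-rank $C^{k-1}$ subbundle near $q$, so that $E$, the frame $X_j$, the curvature $\la$ and hence $\exp_S$ are genuinely $C^{k-1}$, and one must keep $\la$ written as a ratio that is \emph{linear} in $v$ (rather than using $v/|v|$), so that $\Phi$ stays smooth across the zero section. Once that is set up, the differential computation and the short linear-algebra step showing $d\Phi_{(q,0)}$ is invertible, together with the Inverse Function Theorem, finish the argument.
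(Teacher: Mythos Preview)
Your proposal is correct and follows essentially the same route as the paper: build a $C^{k-1}$ horizontal orthonormal frame $X_1,\dots,X_{Q-m}$ of the horizontal normal space via Gram--Schmidt, observe that $\Phi$ is $C^{k-1}$ because $\la$ and the $X_i$ are, compute $d\Phi_{(q,0)}$ as the identity on $T_qS$ and $X_j(q)$ on $\ptl_{y_j}$ using the scaling relation \eqref{eq:geodrelation}, and conclude by the Inverse Function Theorem. Your write-up is in fact more careful than the paper's on two points the paper leaves implicit: why the horizontal tangent bundle has constant rank $m-1$ near a regular point, and why keeping $\la$ linear in $v$ avoids any singularity of $\Phi$ along $y=0$.
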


\begin{proof}
We take a coordinate neighborhood $U'$ of $q$ in $S$ contained in the regular set and a family of $m$ vector fields on $U'$ of class $C^{k-1}$ that span the tangent space $T_xS$ for every $x\in U'$. This can be done using the Jacobian matrix of the immersion $U'\to\hh^n$. Projecting to the horizontal distribution, and using the Gram-Schmidt procedure we can find an orthonormal family
\[
Z_1,\ldots,Z_{m-1},N,X_1,\ldots,X_{Q-m},
\]
of vector fields of class $C^{k-1}$ so that $Z_1,\ldots,Z_{m-1},N$ span the tangent space to $S$ and $Z_1,\ldots,Z_{m-1}$ are horizontal.

On $U'\times\rr^{Q-m}$ we use formula \eqref{eq:Phi} to define the map $\Phi$, which is of class $C^{k-1}$ since the vector fields $X_i$ and the function $\la$ are of class $C^{k-1}$.

To apply the Implicit Function Theorem we compute the differential $d\Phi_{(q,0)}$ of the map $\Phi$ at $(q,0)$. For any vector $u\in T_qS$ we take a curve $\alpha$ defined in an open interval containing $0$ such that $\alpha(0)=q$, $\dot{\alpha}(0)=u$. Then
\[
d\Phi_{(q,0)}(u,0)=\frac{d}{ds}\bigg|_{s=0}\Phi(\alpha(s),0)=\frac{d}{ds}\bigg|_{s=0}\ga_{\alpha(s),0}^{\la(\alpha(s),0)}(1)=\dot{\alpha}(0)=u.
\]
On the other hand, if we take the coordinate vector $e_i$ in $\rr^{Q-m}$, for $i=1,\ldots,Q-m$, we have
\begin{align*}
d\Phi_{(q,0)}(0,e_i)&=\frac{d}{ds}\bigg|_{s=0}\Phi(q,(0,\ldots,\stackrel{(i)}s,\ldots,0))
\\
&=\frac{d}{ds}\bigg|_{s=0}\ga_{q,sX_i}^{\la(q,sX_i)}(1)
\\
&=
\frac{d}{ds}\bigg|_{s=0}\ga_{q,X_i}^{\la(q,X_i)}(s)=(X_i)_q.
\end{align*}
Hence $d\Phi_{(q,0)}$ is a linear isomorphism. The inverse function theorem then implies that $\Phi$ is a $C^{k-1}$ diffeomorphism in a neighborhood of $U\times\{0\}$ for some open neighborhood $U\subset U'$ of $q$.
\end{proof}

\begin{proof}[Proof of Theorem~\ref{thm:tube}]
Assume that $\reach(S,K)=0$. Then we can find sequences of points $p_i\in\hh^n\setminus S$, $q_i,q_i'\in S$ such that $q_i\neq q_i'$, and $\delta(p_i)=d(p_i,q_i)=d(p_i,q_i')\to 0$. Since the three sequences are bounded we can extract subsequences, denoted in the same way as the original sequence, converging to the same point $q\in K$.

Using Lemma~\ref{lem:tube} we can find a neighborhood $U$ of $q$ in $S$ so that $\Phi$ is a $C^{k-1}$ diffeomorphism of a neighborhood of $(p,0)$ in $S\times\rr^{Q-m}$ onto a neighborhood of $q$ in $\hh^n$. Since $p_i,q_i,q_i'$ converge to $q$, this is a contradiction to the injectivity of $\Phi$ (since $\Phi(q_i,y_i)=\Phi(q_i',y_i')=p_i$ for large $i$) that proves that $\reach(S,K)>0$.

Observe that the inverse function of $\Phi$ associates with every $p$, in the image of $\Phi$, the point $\xi(p)\in S$ at minimum distance from $p$ and the vector $y\in\rr^{Q-m}$ such that the geodesic with initial speed $\sum_{i=1}^{Q-m}y_i(p)(X_i)_{\xi(p)}$ connects $\xi(p)$ to $p$. Hence the distance $\delta(p)$ of $p$ to $\xi(p)$ equals $d(p,\xi(p))$, that is equal to $(\sum_{i=1}^{Q-m}y_i(p)^2)^{1/2}$. Hence $\delta(p)$ is of class $C^{k-1}$ whenever $y\neq 0$, i.e when $p\not\in S$. We have also that the map $\xi$ is of class $C^{k-1}$.

Now, for $p$ in the image of $\Phi$, define the function:
\[
v(p):=\sum_{i=1}^{Q-m}y_i(p)(X_i)_{\xi(p)},
\]
that it is of class $C^{k-1}$. Equation~\eqref{eq:1stvarlen} implies that the gradient of $\delta$ at the point $p$ is given by
\[
(\nabla\delta)_p=\dot{\ga}_{\xi(p),v(p)}^{\la(\xi(p),v(p))}(\delta(p))+\frac{\la(\xi(p),v(p))}{2}\,T_p.
\]
The function $\lambda$ is easily seen to be $C^{k-1}$ when $S$ is $C^k$ from its definition in \eqref{eq:deflambda}. Since $\xi(p)$, $\delta(p)$ and $v(p)$ are of class $C^{k-1}$, it follows that $\nabla\delta$ is of class $C^{m-1}$. Hence $\delta$ is of class $C^k$ as claimed.
\end{proof}

\begin{remark}
One could replace the notion of $m$-dimensional manifold by a suitable one of intrinsic $m$-dimensional submanifold. Two non-equivalent definitions were given by Franchi, Serapioni and Serra-Cassano \cite{MR2313532}.
\end{remark}

The following result extends the one by Arcozzi and Ferrari in $\hh^1$ \cite{MR2299576} to higher dimensional Heisenberg groups. It allows to slightly decrease the $C^2$ regularity hypothesis to obtain that a $C^{1,1}$ hypersurface has positive reach far from the singular set.  It provides many examples of sets of positive reach in $\hh^n$.

\begin{theorem}
\label{thm:reachc11}
Let $S\subset\hh^n$ be a closed $C^{1,1}$ hypersurface. Then, for any compact set $K\subset S\setminus S_{0}$, $\reach(S,K)>0$.
\end{theorem}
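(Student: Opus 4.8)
The plan is to run the argument of the proof of Theorem~\ref{thm:tube} by contradiction, the only essential change being that the Tubular Neighborhood map of Lemma~\ref{lem:tube} is now merely Lipschitz, so the Inverse Function Theorem must be replaced by an elementary bi‑Lipschitz estimate. First I would record that, since $S$ is of class $C^{1,1}$, the Euclidean unit normal $N$ and the horizontal unit normal $\nuh=\nh/\mnh$ are locally Lipschitz on $S\setminus S_0$, and that on the compact set $K$ the quantity $\mnh$ is bounded below by a positive constant; hence $\la:=2\escpr{N,T}/\mnh$ is locally Lipschitz on a neighborhood of $K$ in $S$. Fix $q\in K$, a $C^{1,1}$ chart $\psi:V\to W\subset S$ with $\psi(u_0)=q$ and $\overline W\subset S\setminus S_0$, and a continuous choice of $N$ (hence of $\nuh$) on $W$. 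Since here $Q-m=1$, relation \eqref{eq:geodrelation} gives $\exp_S(x,y\,\nuh(x))=\ga_{x,\nuh(x)}^{\la(x)}(y)$, so the map $\Phi:W\times\rr\to\hh^n$, $\Phi(x,y):=\ga_{x,\nuh(x)}^{\la(x)}(y)$, equals $\Theta\bigl(x,\nuh(x),\la(x),y\bigr)$ for the analytic geodesic flow $\Theta(p,v,\mu,s):=\ga_{p,v}^{\mu}(s)$ of \eqref{eq:geodesics}; thus $\widetilde\Phi:=\Phi\circ(\psi\times\mathrm{id}):V\times\rr\to\hh^n$ is Lipschitz in the first variable and smooth in the second.

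Next I would show that $\Phi$ is injective near $(q,0)$. The computation of $d\Phi_{(q,0)}$ is exactly as in Lemma~\ref{lem:tube}: it is the identity on $T_qS\times\{0\}$ and sends $(0,1)$ to $\nuh(q)$; since $q$ is regular, $\nuh(q)\notin T_qS$, so $L:=d\widetilde\Phi_{(u_0,0)}$ is a linear isomorphism. The point is that the almost‑everywhere differential of $\widetilde\Phi$, which exists by Rademacher's theorem, is uniformly close to $L$ on a small neighborhood of $(u_0,0)$: indeed $\partial_y\widetilde\Phi(u,y)=\dot\ga_{\psi(u),\nuh(\psi(u))}^{\la(\psi(u))}(y)=\nuh(\psi(u))+O(y)$ because $|\ddot\ga|=|\la|$ is bounded on $W$, while $\partial_u\widetilde\Phi(u,y)=d\psi_u+O(y)$ because $\partial_p\Theta,\partial_v\Theta,\partial_\mu\Theta$ equal $\mathrm{Id},0,0$ at $s=0$, depend smoothly on all their arguments, and the Lipschitz constants of $\psi$, $\nuh\circ\psi$, $\la\circ\psi$ on $V$ are bounded; finally $d\psi_u\to d\psi_{u_0}$ and $\nuh(\psi(u))\to\nuh(q)$ as $u\to u_0$ by continuity. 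Hence, choosing a convex neighborhood $B\times(-\rho,\rho)$ of $(u_0,0)$ on which the essential supremum of $\|d\widetilde\Phi-L\|$ is smaller than $\tfrac12\|L^{-1}\|^{-1}$, the standard fact that a Lipschitz map on a convex set whose a.e.\ differential lies within $\eta$ of an invertible linear map $L$ satisfies $|F(z)-F(z')|\ge(\|L^{-1}\|^{-1}-\eta)|z-z'|$ — proved by integrating the differential along segments, using that Lipschitz functions are absolutely continuous on lines — shows that $\widetilde\Phi$, and so $\Phi$, is injective on $\psi(B)\times(-\rho,\rho)$.

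Finally, assume $\reach(S,K)=0$. Then there are $p_i\notin S$ and $q_i\neq q_i'$ in $S$ with $d(p_i,q_i)=d(p_i,q_i')=\delta_S(p_i)\to0$, and by compactness the three sequences converge to a common point $q\in K$. For $i$ large, $p_i$ lies in a fixed neighborhood of $q$, on one of the two local sides of $S$; applying Theorem~\ref{thm:exp} to the closed region on the opposite side (whose distance function coincides with $\delta_S$ near $q$, the nearest point lying on $S$), the unique minimizing geodesic from $q_i$ (resp.\ $q_i'$) to $p_i$ has initial velocity $\sigma_i\nuh(q_i)$ (resp.\ $\sigma_i\nuh(q_i')$) and curvature $\sigma_i\la(q_i)$ (resp.\ $\sigma_i\la(q_i')$), where $\sigma_i\in\{+1,-1\}$ depends only on which side contains $p_i$ and is therefore the same for $q_i$ and $q_i'$. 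Using \eqref{eq:geodrelation} once more, this reads $\Phi(q_i,\sigma_i\delta_S(p_i))=p_i=\Phi(q_i',\sigma_i\delta_S(p_i))$ with $q_i\neq q_i'$ and $|\sigma_i\delta_S(p_i)|\to0$, contradicting the injectivity of $\Phi$ on $\psi(B)\times(-\rho,\rho)$ for $i$ large; hence $\reach(S,K)>0$. I expect the main obstacle to be the second paragraph: passing from "$d\Phi_{(q,0)}$ invertible" to "$\Phi$ injective near $(q,0)$", which is immediate in the $C^k$ case of Theorem~\ref{thm:tube}, in the $C^{1,1}$ case requires knowing that $d\Phi$ is not merely invertible almost everywhere but uniformly close to one fixed invertible linear map on a neighborhood — and this is precisely where the Lipschitz (not just a.e.\ differentiable) control on $N$ enters, a point that has to be checked with some care since $\Phi$ is nowhere claimed to be of class $C^1$.
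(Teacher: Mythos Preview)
Your argument is correct, and the potential obstacle you flag in the last paragraph is genuinely handled: the decomposition $\widetilde\Phi=\Theta\circ(\psi,\nuh\circ\psi,\la\circ\psi,\mathrm{id})$ with $\Theta$ smooth and the inner map Lipschitz, together with $\partial_v\Theta,\partial_\mu\Theta=O(y)$, does force the a.e.\ differential of $\widetilde\Phi$ to converge uniformly to $L$ as $(u,y)\to(u_0,0)$, since the only non-continuous factors $d(\nuh\circ\psi)$ and $d(\la\circ\psi)$ are merely \emph{bounded} but are multiplied by terms vanishing at $y=0$. The Lipschitz inverse function estimate you invoke is then exactly the standard one.

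The paper, however, takes a more direct and concrete route. Rather than packaging the argument as a bi-Lipschitz inverse function theorem, it writes out the geodesics $\ga_p=\ga_{p,\nu_S(p)}^{\la_p}$ and $\ga_q=\ga_{q,\nu_S(q)}^{\la_q}$ explicitly via the coordinate formulas \eqref{eq:geodesic2}, and then uses only that $F,G,H$ are Lipschitz on $[-2\pi,2\pi]$ and that $\nu_S$, $\la$, $\pi$ are Euclidean Lipschitz on $K$ to obtain, by elementary triangle inequalities,
\[
|\ga_p(s)-\ga_q(s)|\ge \tfrac{1}{\sqrt{2}}\,|p-q|\,(1-Cs)
\]
for some constant $C>0$. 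This immediately prevents two minimizing geodesics issued from distinct points of $S$ from meeting at distance $s<1/C$, hence gives positive local reach. Your approach and the paper's are really two presentations of the same bi-Lipschitz phenomenon for the exponential map: the paper's explicit coordinate computation yields a concrete lower bound on the reach in terms of the Lipschitz constants of the data, while your more structural argument isolates precisely \emph{why} the $C^{1,1}$ hypothesis suffices (the non-$C^1$ differentials of $\nuh$ and $\la$ only enter multiplied by $O(y)$ factors) and would transfer to sub-Riemannian settings where closed-form geodesic equations are unavailable.
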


\begin{proof}

Consider two points $p_{0}$, $q_{0}$ in $\hh^n\setminus S$ at the same distance from $S$.  Let $p$, $q\in S$ points satisfying $\delta(p_0)=d(p,p_0)$ and $\delta(q_0)=d(q,q_0)$. Let $w_{p}:=\nu_{S}(p)$, $w_{q}:=\nu_{S}(q)$.  Let $\la_{p}$ and $\la_{q}$ the curvatures of unit speed minimizing geodesics joining $p$, $p_0$, and $q$, $q_0$, respectively.  Let $v_{p}$, $v_{q}$ the vectors in
$\rr^{2n}$ obtained from the coordinates of $w_{p}$ and $w_{q}$ in the basis $\{X_{i}, Y_{i} : i=1,\ldots, n\}$.   Let
\[\ga_{p}:=\ga_{{p},w_{p}}^{\la_{p}}, \qquad
\ga_{q}:=\ga_{{q},w_{q}}^{\la_{q}},
\]
and, for $a=p$, $q$, let $\alpha_{a}:=\pi\circ\ga_{a}$, $t_{a}:=t\circ\ga_{a}$. Then from \eqref{eq:geodesic2} we get
\begin{align*}
\alpha_{a}(s)&=\pi(a)+s\,\big(F(\la_{a}s)\,v_{a}+G(\la_{a}s)\,J(v_{a})\big) 
\\
t_{a}(s)&=t(a)+s^2\,H(\la_{a}s)+s\,G(\la_{a}s)\,\escpr{\pi(a),v_{a}}+
s\,F(\la_{a}s)\,\escpr{\pi(a),J(v_{a})}.
\end{align*}
From these expressions we obtain
\begin{align*}
|\pi(p)-\pi(q)|\le
|\alpha_{p}(s)-\alpha_{q}(s)|+s\,|F(\la_{p}s)&\,v_{p}-F(\la_{q}s)\,w_{q}|
\\
&+s\,|G(\la_{p}s)\,J(v_{p})-G(\la_{q}s)\,J(w_{q})|,
\end{align*}
and
\begin{align*}
|t(p)-t(q)|\le |t_{p}(s)-t_{q}(s)|&+s^2\,|H(\la_{p}s)-H(\la_{q}s)|
\\
&+s\,|G(\la_{p}s)\,\escpr{\pi(p),v_{p}}-G(\la_{q}s)\,\escpr{\pi(q),v_{q}}|
\\
&+s\,|F(\la_{p}s)\,\escpr{\pi(p),J(v_{p})}-F(\la_{q}s)\,\escpr{\pi(q),J(v_{q})}|
\end{align*}
Since we are considering minimizing geodesics we know that $|\la_{p} s|, |\la_{q}s|\le 2\pi$. As $F$, $G$, $H$ are Lipschitz in the interval $[-2\pi,2\pi]$, and $\la$, $\nu_{S}$ and $\pi$ are Euclidean Lipschitz, there exist positive constants $C_{i}$, $C_{i}'$ such that
\begin{align*}
|\pi(p)-\pi(q)|&\le
|\alpha_{p}(s)-\alpha_{q}(s)|+(C_{1}s+C_{2}s^2)\,|p-q|,
\\
|t(p)-t(q)|&\le
|t_{p}(s)-t_{q}(s)|+(C_{1}'s+C_{2}'s^2+C_{3}'s^3)\,|p-q|.
\end{align*}
Hence we get, for $s<1$,
\begin{equation*}
|\ga_{p}(s)-\ga_{q}(s)|\ge \frac{1}{\sqrt{2}}\,|p-q|\,(1-Cs),
\end{equation*}
for some constant $C>0$. This inequality implies that $S$ is locally of positive reach, since two minimizing geodesics cannot reach the same point for $s$ small enough.
\end{proof}

\begin{proposition}
\label{prop:parallelc11}
Let $E\subset\hh^n$ be a closed set. Let $K\subset\ptl E$ a compact set contained in the set of regular points of $E$. Assume that $\reach(E,K)>r_{0}>0$, and let $0<r<r_{0}$. Then $\ptl E_{r}\cap\xi^{-1}(K)$ is contained in a Euclidean $C^{1,1}$ hypersurface.
\end{proposition}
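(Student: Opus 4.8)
The plan is to prove that, in the Euclidean sense, $\delta_E$ is of class $C^{1,1}$ on a Euclidean neighbourhood of each $p_0\in\ptl E_r\cap\xi^{-1}(K)$ and has non‑vanishing Euclidean gradient there; the statement then follows from the $C^{1,1}$ implicit function theorem, since near $p_0$ the level set $\{\delta_E=r\}$ coincides with $\ptl E_r$.

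\emph{Localization.} Fix $p_0$ and $q_0:=\xi(p_0)$. Since $q\mapsto\reach(E,q)$ is continuous and $>r_0$ on $K\subset S\setminus S_0$, and $S_0$ is closed in $S$, I would choose a compact neighbourhood $K'$ of $q_0$ in $S\setminus S_0$ with $\reach(E,K')>r_0$, and fix $\rho\in(r,r_0)$. Using $p_0\in B(q_0,r_0)\subset\unp(E)$ and the continuity of $\xi$ and $\delta_E$, pick a bounded Euclidean‑open $\Omega\ni p_0$ with $\overline\Omega\subset\unp(E)\setminus E$, $\delta_E<\rho$ on $\Omega$, and $\xi(\Omega)\subset K'$; then $\Omega\subset(\unp(E)\setminus E)\cap\xi^{-1}(S\setminus S_0)$. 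As $q_0$ is regular, Remark~\ref{rem:regunique} forces uniqueness of the minimizing geodesic from $q_0$ to $p_0$, whence $|\la(p_0)|\,r<2\pi$ and in particular $\pi(p_0)\neq\pi(q_0)$ (distinct curvature‑$\la$ geodesics through $q_0$ meet only after parameter $2\pi/|\la|$, see \S~\ref{sub:geodesics}). Shrinking $\Omega$, one may assume $|\la(p)|\,\delta_E(p)<2\pi$ and $\pi(p)\neq\pi(\xi(p))$ on $\Omega$, so that, by the analyticity of $d$ off vertical lines and of the geodesic equations, the unique minimizing geodesic from $\xi(p)$ to $p$ has initial data depending analytically on $(\xi(p),p)$: there are Euclidean‑analytic maps $\mathbf v$, $\Lambda$, $\mathbf d$ of $(x,y)$ near $(q_0,p_0)$ with $v(p)=\mathbf v(\xi(p),p)$, $\la(p)=\Lambda(\xi(p),p)$, $\delta_E(p)=\mathbf d(\xi(p),p)$.

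\emph{Main step (the main obstacle): $\xi_E$ is Euclidean Lipschitz on $\Omega$.} This is the sub‑Riemannian analogue of Federer's Lipschitz estimate for the metric projection of a set of positive reach. For $p\in\Omega$ the ball $B(p,\delta_E(p))$ misses $E$, so $d(q',p)\ge\delta_E(p)$ for all $q'\in E$; moreover the positive reach at $\xi(p)\in K'$ forces $\ga_{\xi(p),v(p)}^{\la(p)}$ to keep realizing the distance to $E$ up to parameter $r_0$ (the geodesic‑extension property of positive‑reach sets), so $B\big(\ga_{\xi(p),v(p)}^{\la(p)}(\rho),\rho\big)$ misses $E$ too, giving the osculating inequality $d\big(q',\ga_{\xi(p),v(p)}^{\la(p)}(\rho)\big)\ge\rho$. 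For $p,p'\in\Omega$ one transports these metric‑ball inclusions into Euclidean estimates on $\xi(p),\xi(p')$ through the explicit geodesic formulas \eqref{eq:geodesic2} — the mechanism used in the proof of Theorem~\ref{thm:reachc11} — using that, away from the degenerate locus $|\la|\,s=2\pi$, the relevant analytic functions are bi‑Lipschitz on the compact parameter set. Following Federer's computation, suitably adapted (the inner balls give monotonicity of $\xi$, the osculating balls of radius $\rho>\delta_E$ upgrade this to a Lipschitz bound with constant controlled by $(1-\rho/r_0)^{-1}$), one gets $|\xi(p)-\xi(p')|\le C|p-p'|$ on $\Omega$, after a final shrinking. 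I expect the two ingredients here — the geodesic‑extension property and the Euclidean‑Lipschitz bound — to be the only genuinely nontrivial work, being the Heisenberg version of Federer's structure theory for sets of positive reach.

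\emph{Conclusion.} Then $p\mapsto(\xi(p),p)$ is Euclidean Lipschitz on $\Omega$, hence so are $v(\cdot)$, $\la(\cdot)$ and $\delta_E(\cdot)$ by composition with the analytic maps above. Being Euclidean Lipschitz, $\delta_E$ is Euclidean differentiable a.e.; at such a point $p$, comparing $\delta_E$ with the smooth distance $\mathbf d(\xi(p),\cdot)$ to the fixed point $\xi(p)\in E$ (for which $\delta_E\le\mathbf d(\xi(p),\cdot)$, with equality at $p$) and using the first‑variation formula \eqref{eq:1stvarlen} — with variations of the geodesic by horizontal curves provided by Lemma~\ref{lem:horcurves}, as in the proof of Theorem~\ref{thm:tube} — yields
\[
(\nabla\delta_E)_p=\dot{\ga}_{\xi(p),v(p)}^{\la(p)}(\delta_E(p))+\tfrac{\la(p)}{2}\,T_p .
\]
The right‑hand side is a Euclidean Lipschitz vector field $W$ on $\Omega$ with $|W|^2=1+\la(\cdot)^2/4\ge1$; since $\nabla\delta_E=W$ a.e. and $W$ is continuous, $\delta_E\in C^{1,1}(\Omega)$ with $\nabla\delta_E=W\neq0$. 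The $C^{1,1}$ implicit function theorem then exhibits $\{\delta_E=r\}$, hence $\ptl E_r$, as a Euclidean $C^{1,1}$ hypersurface near $p_0$; as $p_0\in\ptl E_r\cap\xi^{-1}(K)$ was arbitrary, this set is contained in a Euclidean $C^{1,1}$ hypersurface.
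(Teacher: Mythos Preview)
Your strategy is different from the paper's and, if carried out, would actually yield a stronger conclusion (that $\delta_E$ is Euclidean $C^{1,1}$ near the parallel set, not merely that the level set sits in a $C^{1,1}$ hypersurface). The difficulty is that your ``Main step'' is not a proof but a programme: you invoke a Heisenberg analogue of Federer's Lipschitz estimate for the metric projection of positive-reach sets, and you say explicitly that you \emph{expect} the adaptation to go through. That adaptation is not routine. Federer's argument uses the Euclidean structure of the balls (strict convexity, the quadratic lower bound $d(q',p)^2\ge\delta_E(p)^2+|q'-\xi(p)|^2(1-\delta_E(p)/r_0)$, etc.) in an essential way; Carnot--Carath\'eodory balls are not Euclidean-convex, have non-isotropic scaling, and degenerate at their poles. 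Pointing to the mechanism in Theorem~\ref{thm:reachc11} does not close the gap: that proof goes in the opposite direction (from $C^{1,1}$ boundary to positive reach) and uses the Lipschitz dependence of the geodesic on its \emph{initial} data on $S$, which you do not have here for a general closed $E$. So as written, the proposal has a genuine hole at precisely the step you flag as the main obstacle.

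The paper bypasses this entirely with a rolling-ball argument. For $p\in\ptl E_r\cap\xi^{-1}(K)$ one has $p\in\ptl B(\xi(p),r)$, and a triangle-inequality computation shows the minimizing geodesic from $\xi(p)$ through $p$ continues to minimize up to parameter $r_0$, so that $p\in\ptl B(q,r_0-r)$ for $q:=\ga_{\xi(p),v(p)}^{\la(p)}(r_0)$ as well. Thus $\ptl E_r\cap\xi^{-1}(K)$ is, at each of its points, tangent from both sides to Carnot--Carath\'eodory spheres; since $p$ lies on the regular (analytic) part of both spheres and their Euclidean principal curvatures depend continuously on $p$ through $(\xi(p),v(p),\la(p),\delta_E(p))$, they are uniformly bounded on the compact preimage. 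Blaschke's Rolling Theorem then produces \emph{Euclidean} balls of a fixed radius $R>0$ tangent from both sides, whence Whitney extension gives a $C^1$ hypersurface and the two-sided Euclidean positive reach upgrades it to $C^{1,1}$. This avoids any Lipschitz estimate on $\xi_E$ and uses only the continuity results already established (Propositions~\ref{prop:xicont}--\ref{prop:vcont}). If you want to salvage your approach, the honest route is to prove the Lipschitz bound on $\xi_E$ directly; otherwise, the rolling-ball argument is both shorter and complete.
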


\begin{proof}
Let $p\in\ptl E_{r}\cap\xi^{-1}(K)$. Then $p\in\ptl B(\xi(p),r)$. Since $p$, $\xi(p)$ do not lie in the same vertical line, $p$ lies in the regular part of the boundary of $B(\xi(p),r)$.

Let $\ga:[0,r]\to\hh^n$ be the unique minimizing geodesic connecting $\xi(p)$ and $p$. We claim that $\ga$ is also minimizing in the larger interval $[0,r_{0}]$. To prove this, let $q\in \ptl E_{r_{0}}$ be the point in $\ptl E_{r_{0}}$ at minimum distance from $p$. Then
\[
d(q,\xi(p))\le d(q,p)+d(p,\xi(p))\le (r_{0}-r)+r=r_{0}.
\]
So $\xi(q)=\xi(p)$ and the only minimizing geodesic connecting $q$ and $\xi(p)$ is $\ga:[0,r_{0}]\to\hh^n$. Then $p\in\ptl B(q,r_{0}-r)$ and $p$ lies in the regular part of the boundary of $B(q,r_{0}-r)$.

Assume that $q\in\delta^{-1}(0,r_{0})\cap\xi^{-1}(K)$.  Then $\xi(q)$, $\la(q)$, $v(q)$ are continuous functions of $q$.  Hence the principal curvatures of $\ptl B(\xi(q),r)$ at $\ga_{\xi(q),v(q)}^{\la(q)}(r)$ depend on the second derivatives of $d_{\xi(q)}$ at the point $\ga_{\xi(q),v(q)}^{\la(q)}(r)$ and so they are continuous functions of $q$.  The same argument can be applied to the balls $B(\ga_{\xi(q),v(q)}^{\la(q)}(r_{0}),r_{0}-r)$. By Blaschke's Rolling Theorem, for every $q\in\ptl E_{r}\cap\xi^{-1}(K)$, there are two Euclidean balls of radius $R>0$ which are tangent at $q$ and leave
$\ptl E_{r}\cap\xi^{-1}(K)$ outside of the union of the balls. By Whitney's extension Theorem in Euclidean space, $\ptl E_{r}\cap\xi^{-1}(K)$ is contained in a Euclidean $C^1$ hypersurface. The condition on the tangent balls of uniform radius imply that $\ptl E_{r}\cap\xi^{-1}(K)$ is of Euclidean positive reach on both sides and so it is a Euclidean $C^{1,1}$ hypersurface.
\end{proof}

\begin{remark}
An interesting open question is the regularity of the distance function to a closed set $E\subset\hh^n$ when the boundary $\ptl E$ is of class $C_H^2$. This means that $\ptl E$ is locally the level set of a continuous function possessing horizontal derivatives of order two, see \cite{MR1871966}.
\end{remark}

\section{Steiner's formula for hypersurfaces
}
\label{sec:steiner}

Let $E\subset\hh^n$ be a closed set with $C^2$ boundary $S$. Consider a relatively compact open set $U$ in $S$ such that $\overline{U}\subset S\setminus S_0$. We know from Theorem~\ref{thm:tube} that $\reach(S,\overline{U})>0$ and that the distance function $\delta_E$ is of class $C^2$ in a neighborhood of $\overline{U}$ intersected with $\hh^n\setminus E$. For $r>0$ small enough, we want to find a formula expressing the volume of the set
\[
U_r:=\{p\in\hh^n\setminus E: \xi(p)\in U, \delta(p)\le r\}
\]
in terms of $r>0$ and the geometry of $S$.

The next Lemma, a version of the coarea formula, shows that it is enough to consider the sub-Riemanian area of the equidistant hypersurfaces. More general coarea formulas have been proven in the Heisenberg group and in more general spaces, e.g. Magnani \cite{MR1874099} and Karmanova \cite{MR2498573}.

\begin{lemma}
Let $E\subset\hh^n$ be a closed set with $C^2$ boundary $S$, and $U$ a relatively compact open set in $S$ such that $\overline{U}\subset S\setminus S_0$. Then, for $r>0$ small enough, we have
\begin{equation}
\label{eq:volumeur}
|U_r|=\int_0^r A(S_t\cap\xi^{-1}(U))\,dt,
\end{equation}
where $A$ is the sub-Riemannian area and $S_t$ is the hypersurface $S_t:=\{p\in\hh^n\setminus E: \delta(p)=t\}$.
\end{lemma}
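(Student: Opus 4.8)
The plan is to deduce \eqref{eq:volumeur} from the classical Riemannian coarea formula applied to $\delta=\delta_E$. Recall first that $\delta$ is globally Lipschitz on $\hh^n$, hence Lipschitz for the left-invariant Riemannian distance (since $d_{\mathrm{Riem}}\le d$), and that, as noted at the beginning of this section, Theorem~\ref{thm:tube} (with $k=2$, $m=2n$) and Proposition~\ref{prop:xicont} provide, for $0<r<\reach(S,\overline U)$, an open neighbourhood $W$ of $U_r$ inside $\hh^n\setminus E$ on which $\delta$ is $C^2$ and the metric projection $\xi$ is continuous. By Lemma~\ref{lem:federer}(ii), $(\nabla_\hh\delta)_p=\ga'(\delta(p))$ is a horizontal \emph{unit} vector on $W$, so
\[
|\nabla\delta|^2=|\nabla_\hh\delta|^2+T(\delta)^2=1+T(\delta)^2\ge 1
\]
there; in particular $\nabla\delta$ never vanishes on $W$, so each level set $S_t=\delta^{-1}(t)$ meets $W$ in a $C^2$ hypersurface.

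First I would fix such an $r$, pick $r<r'<\reach(S,\overline U)$, and set $V:=\{p\in W:\delta(p)<r',\ \xi(p)\in U\}$, an open set with $U_r=\{p\in V:\delta(p)\le r\}$ and $S_t\cap\xi^{-1}(U)=S_t\cap V$ for $0<t<r'$; note that $U_r$ differs from the open set $\{p\in V:\delta(p)<r\}$ only by the Lebesgue-null set $S_r\cap\xi^{-1}(U)$, a slice of the $C^2$ hypersurface $S_r$. Next I would record the standard identity $A(\Sigma)=\int_\Sigma\mnh\,d\mathcal H^{2n}$ relating the sub-Riemannian area of a $C^1$ hypersurface $\Sigma$ to its Riemannian $2n$-area element $d\mathcal H^{2n}$, $N$ being a Riemannian unit normal. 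On $S_t\subset W$ we have $N=\nabla\delta/|\nabla\delta|$, hence $N_h=\nabla_\hh\delta/|\nabla\delta|$ and $\mnh=|\nabla_\hh\delta|/|\nabla\delta|=1/|\nabla\delta|$, so that
\[
A(S_t\cap\xi^{-1}(U))=\int_{S_t\cap\xi^{-1}(U)}\frac{d\mathcal H^{2n}}{|\nabla\delta|},\qquad 0<t<r'.
\]

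Finally I would apply the coarea formula to the Lipschitz function $\delta$ and the nonnegative test function $g:=|\nabla\delta|^{-1}\chi_{U_r}$, obtaining
\[
|U_r|=\int_{U_r}dV=\int_{\hh^n} g\,|\nabla\delta|\,dV=\int_{\rr}\bigg(\int_{\delta^{-1}(t)} g\,d\mathcal H^{2n}\bigg)\,dt .
\]
Since $\delta>0$ on $\hh^n\setminus E$, the inner integrand vanishes unless $0<t\le r$, and for such $t$ the factor $\chi_{U_r}$ restricts $\delta^{-1}(t)$ to $S_t\cap\xi^{-1}(U)$ (the constraint $\delta\le r$ being automatic), so the inner integral equals $A(S_t\cap\xi^{-1}(U))$ by the previous step. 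This yields $|U_r|=\int_0^r A(S_t\cap\xi^{-1}(U))\,dt$, which is \eqref{eq:volumeur}.

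The one point requiring care is the set-up: one must know that $\delta_E$ agrees with the distance to $S$, is $C^2$ with non-vanishing Riemannian gradient, and has continuous metric projection on a \emph{full} neighbourhood of $U_r$, not merely along $S$, so that the slices $S_t\cap\xi^{-1}(U)$ are genuine relatively open pieces of $C^2$ hypersurfaces and the coarea slicing is legitimate. All of this is supplied by Theorem~\ref{thm:tube} and the Tubular Neighborhood Lemma~\ref{lem:tube} once $r<\reach(S,\overline U)$, so it amounts to bookkeeping rather than a genuine obstacle; the Riemannian coarea formula and the identity $dP=\mnh\,d\mathcal H^{2n}$ for the sub-Riemannian area element are standard.
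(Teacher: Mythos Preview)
Your argument is correct and follows the same overall strategy as the paper: apply the Riemannian coarea formula to $\delta$ on the tubular set and then identify the density $1/|\nabla\delta|$ on each slice $S_t$ with $|N_h^t|$, so that the slice integral becomes the sub-Riemannian area $A(S_t\cap\xi^{-1}(U))$.

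Where you differ is in the justification of $|N_h^t|=1/|\nabla\delta|$. The paper first computes $(\nabla\delta)_p=\dot\ga(\delta(p))+\tfrac{\la(p)}{2}T_p$ and hence $|\nabla\delta|_p=1/|(N_h)_{\xi(p)}|$, and then devotes a separate argument to the auxiliary identity $|(N_h)_{\xi(p)}|=|(N_h^t)_p|$ by observing that the minimizing geodesic from $\xi(p)$ to $p$ also realizes the distance to every intermediate level $S_t$, so its curvature---and with it $\escpr{N,T}/|N_h|$---is constant along the geodesic. You bypass this detour entirely: from $N^t=\nabla\delta/|\nabla\delta|$ and $|\nabla_\hh\delta|=1$ (Lemma~\ref{lem:federer}(ii)) you get $|N_h^t|=|\nabla_\hh\delta|/|\nabla\delta|=1/|\nabla\delta|$ in one line. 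This is cleaner; the paper's route has the side benefit of establishing explicitly that $\escpr{N,T}/|N_h|$ is preserved under the flow to the parallels, a fact used implicitly later in Section~\ref{sec:steiner}, but for the lemma itself your shortcut is all that is needed.
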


\begin{proof}
In a neighborhood of $\overline{U}$ intersected with $\hh^n\setminus E$ the distance function $\delta$ is of class $C^2$ and has non-vanishing gradient by Theorem~\ref{thm:tube}. Hence, for $t>0$ small enough, the surface $S_t\cap\xi^{-1}(U)$ is a $C^2$ level set of $\delta$. By the Riemannian coarea formula
\[
|U_r|=\int_0^r\bigg\{\int_{S_t\cap\xi^{-1}(U)}\frac{1}{|\nabla\delta|}\,dS_t\bigg\}\,dt,
\]
where $dS_t$ is the Riemannian hypersurface area element on $S_t$. If $p\in S_t\cap\xi^{-1}(U)$, observe that we have
\[
(\nabla\delta)_p=\dot{\ga}_{\xi(p),\nuh(p)}^{\la(p)}(\delta(p))+\frac{\la(p)}{2}\,T_p.
\]
Since $\la(p)/2=(\escpr{N,T}/|N_h|)(\xi(p))$, where $N$ is the outer unit normal to $S$, we get
\[
|(\nabla\delta)_p|^2=\frac{1}{|(N_h)_{\xi(p)}|^2}.
\]
Then the formula \eqref{eq:volumeur} follows if we show
\begin{equation}
\label{eq:nhconstant}
|(N_h)_{\xi(p)}|=|(N^t_h)_p|,
\end{equation}
where $N^t$ is the outer unit normal to $S_t$, for all $t\in (0,r)$. To prove \eqref{eq:nhconstant} consider the geodesic $\ga:[0,\delta(p)]\to\hh^n$ joining $q=\xi(p)$ and $p$ with initial conditions $q$, $(\nuh)_q$ and curvature $	\la=2\escpr{N_q,T_q}/|(N_h)_q|$. For $t\in (0,\delta(p))$, the geodesic $\ga$ restricted to $[t,\delta(p)]$ also minimizes the distance to $S_t$ (this is a standard metric argument using the triangle inequality). Hence $\ga:[t,\delta(p)]\to\hh^n$ realizes the distance from $p$ to $S_t$ and we conclude that its curvature $\la$ equals $(\escpr{N^t,T_p}/|N^t_h|)(\ga(t))$. So we have 
\[
\frac{\escpr{N,T}}{|N_h|}(\xi(p))=\frac{\escpr{N^t,T}}{|N^t_h|}(\ga(t)),\quad\text{for all }t\in (0,\delta(p)).
\]
By continuity this formula also holds for $\ga(\delta(p))=p$, that implies \eqref{eq:nhconstant}. Hence we obtain
\[
\int_{S_t\cap\xi^{-1}(U)}\frac{1}{|\nabla\delta|}dS_t=\int_{S_t\cap\xi^{-1}(U)}|N^t_h|dS_t=A(S_t\cap\xi^{-1}(U)),
\]
that implies \eqref{eq:volumeur}.
\end{proof}

To obtain an expression for $|U_r|$ is then enough to compute the sub-Riemannian area of the parallel hypersurface $S_r\cap\xi^{-1}(U)$. We shall do it using the area formula. So let us consider the exponential map $\exp_S^t(q):=\ga_{q,\nuh(q)}^{\la(q)}(t)$ restricted to $U$, $\exp^t_S:U\to S_t\cap\xi^{-1}(U)$, and take into account that
\[
A(S_t\cap\xi^{-1}(U))=\int_{S_t\cap\xi^{-1}(U)}|N^t_h|dS_t
=\int_U|N_h|\,\text{Jac}(\exp_S^t)dS,
\]
where $dS$ and $dS_t$ are the Riemannian volume elements in $S$ and $S_t$, respectively, and $\text{Jac}(\exp_S^t)$ is the Jacobian of the map $\exp_S^t$.

We compute the Jacobian of $\exp_S^t$ the following way: we fix $q\in S$ and an orthonormal basis $e_1,\ldots,e_{2n}$ of $T_qS$. Let $\ga$ be the geodesic with initial conditions $q$, $(\nuh)_q$, and curvature $2\escpr{N_q,T_q}|(N_h)_q|$. The vector fields $(d\exp_S^t)_q(e_i)$, $i=1,\ldots,2n$, along $\ga$ are the Jacobi fields $E_i(t)$ along the geodesic $\ga(t)$ with initial conditions $E_i(0)=e_i$, $\dot{E}_i(0)=\nabla_{e_i}\nuh+2\escpr{J(\dga(0)),e_i}T_q$, and derivative of curvature given by $2e_i(\escpr{N,T}/|N_h|)$. Choosing, for any $i$, a curve $\alpha_i:(-\eps,\eps)\to S$ satisfying $\alpha_i(0)=q$ and $\dot{\alpha}_i(0)=e_i$, it is easy to check that $E_i(t)$ is the vector field $\ptl/\ptl s|_{s=0} \exp^t_S(\alpha(s))$ associated to the variation $(s,t)\mapsto \exp_S^t(\alpha(s))$ by arc-length parameterized geodesics. Observe that any Jacobi field $U$ along $\ga$ corresponding to a variation by arc-length parameterized curves and such that $U(0)\in T_qS$ satisfies
\begin{equation}
\label{eq:dga+t}
\escpr{U,\dga+\tfrac{\la}{2}T}=0
\end{equation}
along $\ga$, where $\la=2\escpr{N_q,T_q}/|(N_h)_q|$. To prove \eqref{eq:dga+t} we recall that the function $\rho(t):=\escpr{U(t),\dga(t)+\tfrac{\la}{2}T_{\ga(t)}}$ is constant by Remark~\ref{rem:unitspeedjacobi}. Since $\dga(0)+\tfrac{\la}{2}T_{\ga(0)}=N_q$, where $N$ is a Riemannian unit normal to $S$, we get $\rho(0)=\escpr{U(0),N_q}=0$.

The Jacobian is then given by
\[
\text{Jac}(\exp_S^t)(q)=|E_1(t)\wedge\ldots\wedge E_{2n}(t)|.
\]

If $n\ge 2$, we take an orthonormal family $X_2,Y_2,\ldots,X_n,Y_n$ of horizontal left-invariant vector fields, orthogonal to $\dga$ and $J(\dga)$, and such that $Y_i=J(X_i)$. We consider the orthonormal basis along $\ga$ given by the vectors $\dga,J(\dga),T,X_2,Y_2,\ldots,X_n,Y_n$. Because of equality \eqref{eq:dga+t}, we can express $E_1\wedge\ldots\wedge E_{2n}$ along $\ga$ as a linear combination of  $J(\dga)\wedge T\wedge X_2\wedge\ldots\wedge Y_n$ and $\dga\wedge J(\dga)\wedge X_2\wedge\ldots\wedge Y_n$. Again by equality \eqref{eq:dga+t}, we get that $\text{Jac}(\exp_S^t)(q)$ is equal to $(1+(\tfrac{\la(q)}{2})^2)^{1/2}=1/|N_h|(q)$ times the modulus of the determinant of the matrix 
\begin{equation}
\label{eq:matrix}
B(t)={\setlength{\arraycolsep}{0pt}\begin{pmatrix}
\escpr{E_1,J(\dga)} & \escpr{E_1,T} & \escpr{E_1,X_2}  & \escpr{E_1,Y_2} &\ldots  & \escpr{E_1,X_n} & \escpr{E_1,Y_n}
\\
\vdots & \vdots & \vdots& \vdots & \ddots & \vdots & \vdots &
\\
\escpr{E_i,J(\dga)} & \escpr{E_i,T} & \escpr{E_i,X_2}  & \escpr{E_i,Y_2} & \ldots  & \escpr{E_i,X_n} & \escpr{E_i,Y_n}
\\
\vdots & \vdots &  \vdots & \vdots & \ddots & \vdots 
\\
\escpr{E_{2n},J(\dga)} & \escpr{E_{2n},T} & \escpr{E_{2n},X_2}  & \escpr{E_{2n},Y_2} & \ldots  & \escpr{E_{2n},X_{2n}} & \escpr{E_{2n},Y_{2n}}
\end{pmatrix}}
\end{equation}
evaluated at the point $\ga(t)$. Hence we get
\begin{equation}
\label{eq:areadetb}
A(S_t\cap\xi^{-1}(U))=\int_U |\det(B(t))|\,dS.
\end{equation}

If $n=1$, reasoning as in the previous paragraph we obtain that $\dga\wedge J(\dga)$ is a linear combination of $J(\dga)\wedge T$ and $\dga\wedge J(\dga)$, and that the Jacobian is given by $|N_h|^{-1} |\det(B(t))|$, where $B(t)$ is now the matrix
\begin{equation}
\label{eq:matrix1}
\begin{pmatrix}
\escpr{E_1,J(\dga)} & \escpr{E_1,T}
\\
\escpr{E_2,J(\dga)} & \escpr{E_2,T}
\end{pmatrix}
\end{equation}
evaluated at the point $\ga(t)$. Again the expression for $A(S_t\cap\xi^{-1}(U))$ is given by formula \eqref{eq:areadetb}.

\subsection{The case of $\hh^1$}

For the first Heisenberg group we have the following result

\begin{theorem}
\label{thm:steiner1}
Let $S\subset\hh^1$ be a hypersurface of class $C^k$, $k\ge 2$, bounding a closed region $E$, and let $U\subset S$ be an open subset such that $\overline{U}\subset S\setminus S_0$. Then the volume of the one-side tubular neighborhood $U_r=\{p\in\hh^1:\xi_E(p)\in U, \de(p)<r\}$ is given by
\begin{equation}
\label{eq:tub1}
|U_r|= \sum_{i=0}^4\int_U\bigg\{\int_0^r a_i f_i(\la, s)\,ds\bigg\}\,dS,
\end{equation}
where $\la$ is the function $2\escpr{N,T}/|N_h|$, defined on $S\setminus S_0$, the functions $f_i$ have been defined in \eqref{eq:fi}, and the  coefficients $a_i$ are given by the expressions
\begin{equation}
\label{eq:ai}
\begin{split}
a_{0}&=|N_{h}|,
\\
a_{1}&=|N_h| H,
\\
a_{2}&=-4|N_h|e_1\bigg(\ntnh\bigg),
\\
a_{3}&=-4e_2\bigg(\ntnh\bigg),
\\
a_{4}&=-4He_2\bigg(\ntnh\bigg)-4|N_h|\bigg(e_1\bigg(\ntnh\bigg)\bigg)^2.
\end{split}
\end{equation}

In the formulas \eqref{eq:ai}, $H$ is the mean curvature of $S\setminus S_0$, $e_1=J(\nuh)$ and $e_2=\escpr{N,T}\nuh-\mnh T$.
\end{theorem}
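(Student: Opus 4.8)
The plan is to run, in $\hh^1$, the machinery already assembled in this section: by the coarea formula \eqref{eq:volumeur} together with \eqref{eq:areadetb} it is enough to compute the determinant of the $2\times 2$ matrix $B(t)$ of \eqref{eq:matrix1} along the minimizing geodesic $\ga=\ga_{q,\nuh(q)}^{\la(q)}$, where $\la(q)=2\escpr{N,T}/|N_h|(q)$ by Theorem~\ref{thm:exp}, to integrate $|\det B(t)|$ in $t$ over $(0,r)$, and then to integrate over $U$ against the Riemannian area element $dS$. The rows of $B(t)$ are $\big(\escpr{E_i,J(\dga)},\escpr{E_i,T}\big)$ evaluated at $\ga(t)$, where $E_1,E_2$ are the Jacobi fields along $\ga$ attached to the variation by parallels. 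Writing $E_i=a_i\dga+b_iJ(\dga)+c_iT$, Lemma~\ref{lem:jacobi-2} and Corollary~\ref{cor:jacobiequations-1} collapse everything to the scalar data $c_i(s)=c_i(0)+\dot c_i(0)f_1(\la,s)+\ddot c_i(0)f_2(\la,s)-2\la_i'k(\la,s)$, with $\dot c_i=2b_i$, $\ddot c_i(0)=2\dot b_i(0)$, and $\la_i'=2e_i\!\big(\ntnh\big)$ the derivative of the curvature.

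First I would fix the orthonormal basis of $T_qS$ that appears in \eqref{eq:ai}, namely $e_1=Z=J(\nuh)$ and $e_2=\escpr{N,T}\nuh-\mnh T$; note that $\dga(0)=\nuh(q)$ forces $J(\dga(0))=e_1$. The Jacobi initial data are $E_i(0)=e_i$ and $\dot E_i(0)=\nabla_{e_i}\nuh+2\escpr{J(\dga(0)),e_i}T_q$. From $E_1(0)=e_1=J(\dga(0))$ one reads $b_1(0)=1$, $c_1(0)=0$; since $J(Z)_{ht}=0$ in $\hh^1$, the definition of the horizontal second fundamental form gives $\nabla_Z\nuh=-HZ$, hence $\dot E_1(0)=-HZ+2T_q$ and, the $a_1(0)$-term being zero, $\dot b_1(0)=-H$. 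From $E_2(0)=e_2=\escpr{N,T}\dga(0)-\mnh T$ one reads $b_2(0)=0$, $c_2(0)=-\mnh$; since $\escpr{J(\dga(0)),e_2}=0$ one has $\dot E_2(0)=\nabla_{e_2}\nuh$, which by Proposition~\ref{prop:sigmaproperties}(2) (in $\hh^1$ one has $\nabla_S^hf=e_1(f)\,e_1$) equals $-\mnh\big(e_1(\ntnh)+2(\ntnh)^2\big)Z$; combining with the $a_2(0)$-correction $\la\escpr{N,T}=2\mnh(\ntnh)^2$ yields $\dot b_2(0)=-\mnh e_1\!\big(\ntnh\big)$. Substituting into Corollary~\ref{cor:jacobiequations-1} and using $\partial_sf_1=f_0$, $\partial_sf_2=f_1$, $\partial_sk=f_2$ produces $c_1,c_2$ and $b_i=\tfrac12\dot c_i$ as explicit trigonometric combinations of $f_0,f_1,f_2,k$ with coefficients built from $\mnh$, $H$, $e_1(\ntnh)$ and $e_2(\ntnh)$.

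Then comes the heart of the argument: expanding $\det B(t)=b_1(t)c_2(t)-b_2(t)c_1(t)$. The transcendental mixed products that arise ($f_0f_2$, $f_1f_2$, $f_1^2$, $f_2^2$, $f_1k$, $f_2k$) either cancel in pairs in the difference $b_1c_2-b_2c_1$ or collapse through the elementary identities
\begin{equation*}
f_2(1-f_0)+f_1^2=2f_2,\qquad f_2^2-f_1k=f_4,\qquad f_1f_2-f_0k=f_3,
\end{equation*}
all immediate from \eqref{eq:Fi} and \eqref{eq:fi}. After this only $f_0,f_1,f_2,f_3,f_4$ remain, and $\det B(t)$ equals, up to an overall sign, $a_0f_0+a_1f_1+a_2f_2+a_3f_3+a_4f_4$ evaluated at $\big(\la(q),t\big)$, with the $a_i$ of \eqref{eq:ai}. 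The sign is pinned down once and for all by $\det B(0)=b_1(0)c_2(0)=-\mnh\neq0$: since $0<r<\reach(S,\overline U)$, Theorem~\ref{thm:tube} makes $\exp_S^t$ a diffeomorphism onto $S_t\cap\xi^{-1}(U)$ for $t\in(0,r)$, so $\det B$ cannot vanish there and keeps that sign, whence $|\det B(t)|=\sum_{i=0}^4a_if_i(\la,t)$. Integrating in $t$ and then over $U$ via \eqref{eq:areadetb} and \eqref{eq:volumeur} gives \eqref{eq:tub1}, while the last assertion (local Lipschitz character of $\la$) is part of Theorem~\ref{thm:exp}.

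The step I expect to be the main obstacle is precisely this final expansion: one must keep careful track of every sign — those of $H$ entering through $\nabla_Z\nuh$, of $\la_i'$ entering through the $-2\la'k$ term of Corollary~\ref{cor:jacobiequations-1}, and of the $\la a_i(0)$ corrections in $\dot b_i(0)$ — and verify that, after the three identities above, no term outside $\operatorname{span}\{f_0,\dots,f_4\}$ survives and that the surviving coefficients match \eqref{eq:ai} exactly. The analytic infrastructure needed for the coarea and area formulas to apply — positivity of $\reach(S,\overline U)$ and $C^2$ regularity of $\de$ and of the parallels near $\overline U$ — is already furnished by Theorem~\ref{thm:tube}.
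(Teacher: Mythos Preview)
Your proposal is correct and follows essentially the same route as the paper: reduce to $|\det B(t)|$ via \eqref{eq:volumeur} and \eqref{eq:areadetb}, express the entries through $c_i$ and $\dot c_i=2b_i$ using Corollary~\ref{cor:jacobiequations-1}, compute the initial data for the basis $e_1=J(\nuh)$, $e_2=\escpr{N,T}\nuh-|N_h|T$, and collapse the bilinear expansion with the identities $f_1^2-f_0f_2=f_2$, $f_1f_2-f_0k=f_3$, $f_2^2-f_1k=f_4$. The only point to watch is the sign convention for $H$: in the proof the paper takes $H=\escpr{e_1,\nabla_{e_1}\nuh}$, whereas your line $\nabla_Z\nuh=-HZ$ uses the opposite sign coming from \eqref{eq:sigma}; be sure the convention you carry through is the one that makes \eqref{eq:ai} come out as stated.
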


The rest of this subsection is devoted to the proof of Theorem~\ref{thm:steiner1}. We shall use formula \eqref{eq:areadetb} and the expression of $B(t)$ given in \eqref{eq:matrix1}. For the first part of these computation we shall consider an arbitrary orthonormal basis $\{e_1,e_2\}$ of the tangent plane $T_qS$. Consider the functions $c_i=\escpr{E_i,T}$, $i=1,2$, defined along $\ga$. By \eqref{eq:jacobiequations} we have $2\escpr{E_i,J(\dga)}=\dot{c}_i$,$i=1,2$, so that
\[
A(S_t\cap\xi^{-1}(U))=\frac{1}{2}\int_U |\dot{c}_1c_2-c_1\dot{c}_2|\, dS.
\]
To compute $c_{i}(s)$, $i=1,2$, we shall use that $c_{i}$ satisfies the third order equation $\dddot{c}_{i}+\la^2\dot{c}_{i}+2\la_{i}'=0$, as shown in Corollary~\ref{cor:jacobiequations-1}, where $\la$ is the function $2\escpr{N,T}/|N_{h}|$, and $\la_{i}'$ is equal to $e_{i}(\la)$ for $i=1,2$. By \eqref{eq:expc}, the solutions of this equation are given by
\[
c_i(s)=c_i(0)+\dot{c}_i(0)f_1(\la, s)+\ddot{c}_i(0)f_2(\la, s)-2\la_i'k(\la, s),
\]
where the functions $f_1, f_2, k$ were defined in \eqref{eq:fi}. 

So we get
\begin{align*}
\dot{c}_i(s)&=\dot{c}_i(0)f_0(\la s)+\ddot{c}_i(0)f_1(\la, s)-2\la_i'f_2(\la, s).
\end{align*}
Hence
\begin{align*}
(c_{1}\dot{c}_{2}-c_{2}\dot{c}_{1})(t)=&\,(c_{1}\dot{c}_{2}-c_{2}\dot{c}_{1})(0)\,f_{0}(\la, s)
\\
+&\,(c_{1}\ddot{c}_{2}-c_{2}\ddot{c}_{1})(0)\,f_{1}(\la, s)
\\
+&\,(\dot{c}_{1}\ddot{c}_{2}-\dot{c}_{2}\ddot{c}_{1})(0)\,(f_{1}^2-f_{0}f_{2})(\la, s)
-\,2\,(c_{1}(0)\la_2'-c_{2}(0)\la_1')\,f_{2}(\la, s)
\\
-&\,2\,(\dot{c}_{1}(0)\la_2'-\dot{c}_{2}(0)\la_1')\,(f_{1}f_{2}-f_{0}k)(\la, s)
\\
-&\,2\,(\ddot{c}_{1}(0)\la_2'-\ddot{c}_{2}(0)\la_1')\,(f_{2}^2-f_{1}k)(\la, s).
\end{align*}
A simple computation shows that
\begin{equation*}
(f_{1}^2-f_{0}f_{2})(\la, s)=f_{2}(\la,s),
\end{equation*}
and so we get
\begin{align*}
\tfrac{1}{2}(c_{1}\dot{c}_{2}-c_{2}\dot{c}_{1})(s)=&\tfrac{1}{2}(c_{1}\dot{c}_{2}-c_{2}\dot{c}_{1})(0)\,f_{0}(\la,s)
\\
+&\tfrac{1}{2}(c_{1}\ddot{c}_{2}-c_{2}\ddot{c}_{1})(0)\,f_{1}(\la, s)
\\
+&\tfrac{1}{2}\big((\dot{c}_{1}\ddot{c}_{2}-\dot{c}_{2}\ddot{c}_{1})(0)-2\,(c_{1}(0)\la_{2}'-c_{2}(0)\la_{1}')\big)\,f_{2}(\la,s)
\\
-&(\dot{c}_{1}(0)\la_{2}'-\dot{c}_{2}(0)\la_{1}')\,f_3(\la, s)
\\
-&(\ddot{c}_{1}(0)\la_{2}'-\ddot{c}_{2}(0)\la_{1}')\,f_4(\la, s).
\end{align*}

Let us remark that the functions $f_0,f_1,f_2,f_3,f_4$ have the order indicated by their subscripts. Let us call $a_{i}$ to the coefficients appearing in the above formula, that is
\begin{equation*}
\begin{split}
a_0&=\tfrac{1}{2}(c_{1}\dot{c}_{2}-c_{2}\dot{c}_{1})(0),
\\
a_1&=\tfrac{1}{2}(c_{1}\ddot{c}_{2}-c_{2}\ddot{c}_{1})(0),
\\
a_2&=\tfrac{1}{2}\big((\dot{c}_{1}\ddot{c}_{2}-\dot{c}_{2}\ddot{c}_{1})(0)-2\,(c_{1}(0)\la_{2}'-c_{2}(0)\la_{1}')\big),
\\
a_3&=-(\dot{c}_{1}(0)\la_{2}'-\dot{c}_{2}(0)\la_{1}'),
\\
a_4&=-(\ddot{c}_{1}(0)\la_{2}'-\ddot{c}_{2}(0)\la_{1}').
\end{split}
\end{equation*}
Hence we have
\begin{equation*}
\tfrac{1}{2}(c_{1}\dot{c}_{2}-c_{2}\dot{c}_{1})(s)=a_{0}f_{0}(\la,s)+a_{1}f_{1}(\la,s)+a_{2}f_{2}(\la,s)+a_{3}f_3(\la,s)+a_{4}f_4(\la,s).
\end{equation*}

We now compute the coefficients $c_{i}(0),\dot{c}_{i}(0),\ddot{c}_{i}(0)$ and $\la_{i}'$, for $i=1,2$. At this point we choose as orthonormal basis on $T_qS$ the one determined by the vectors $e_1=J((\nuh)_q)$ and $e_2=(\escpr{N,T}\nuh-|N_h|T)_q$. We have
\begin{equation}
\label{eq:ci}
c_{i}(0)=\begin{cases}
0, & i=1, \\
-|(N_{h})_{q}|, & i=2.
\end{cases}
\end{equation}
For the first derivative take into account that, for a generic Jacobi field $U$ along the geodesic $\ga$ we get
\begin{equation}
\label{eq:ci'0}
\frac{d}{ds}\bigg|_{s=0}
\escpr{U(s),T_{\ga(s)}}=\escpr{\nabla_{\dga(0)}U,T_{q}}=2\tor(J(\dga(0)),U(0))=2\,\escpr{e_{1},U(0)},
\end{equation}
so that
\begin{equation}
\label{eq:ci'}
\dot{c}_{i}(0)=\begin{cases}
2, & i=1,
\\
0, & i=2.
\end{cases}
\end{equation}
For the second derivative we obtain from the Jacobi equation \eqref{eq:jacobi}
\begin{align*}
\frac{d^2}{ds^2}
\escpr{U,T}&=\dga\escpr{\nabla_{\dga} U,T}=\dga(\tor(\dga,U))
\\&=2\dga\escpr{J(\dga),U}
\\
&=2\,\big(\escpr{J(\nabla_{\dga}\dga),U}+\escpr{J(\dga),\nabla_U\dga}\big)
\\
&=2\,\big(\la\escpr{\dga,U}+\escpr{J(\dga),\nabla_U\dga}\big).
\end{align*}
Evaluating at $s=0$ we get
\begin{equation}
\label{eq:ci''0}
\frac{d^2}{ds^2}\bigg|_{s=0}\escpr{U(s),T_{\ga(s)}}=2\la\escpr{(\nuh)_q,U(0)}+2\escpr{e_1,\nabla_{U(0)}\nuh}.
\end{equation}
Hence we obtain
\begin{equation*}
\ddot{c}_{i}(0)=\begin{cases}
2\escpr{e_1,\nabla_{e_1}\nuh}, & i=1,
\\
2\la\escpr{N_q,T_q}+2\escpr{e_1,\nabla_{e_2}\nuh},
&i=2.
\end{cases}
\end{equation*}
When the surface is of class $C^2$, its sub-Riemannian mean curvature, see \cite{MR2435652} and \cite{MR2898770}, is defined by
\[
H(q)=\escpr{e_1,\nabla_{e_1}\nuh}.
\]
The covariant derivative of $\nuh$ in the direction of $e_2$ was computed in \eqref{eq:nablaEnuh}. Its product with $e_1$ is given by
\[
\escpr{\nabla_{e_2}\nuh,e_1}=-|(N_h)_q|\,\bigg(e_1\bigg(\ntnh\bigg)+2\bigg(\ntnh\bigg)^2(q)\bigg).
\]
Hence we get
\begin{equation}
\label{eq:ci''}
\ddot{c}_{i}(0)=\begin{cases}
2H(q), & i=1,
\\
-2|(N_h)_q|\,e_1\big(\ntnh\big),
&i=2.
\end{cases}
\end{equation}
Finally, we notice that
\begin{equation}
\label{eq:lai'}
\la_{i}'=e_i(\la)=2e_i\bigg(\frac{\escpr{N,T}}{|N_h|}\bigg), \quad i=1,2.
\end{equation}

From equations \eqref{eq:ci}, \eqref{eq:ci'}, \eqref{eq:ci''} and \eqref{eq:lai'} we finally obtain \eqref{eq:ai}.

\begin{remark}
The functions $f_i$ are analytic and can be written as power series. This way we can obtain an expression for $|U_r|$ of any order. To obtain an expansion of order three, using \eqref{eq:fi}, we compute
\begin{align*}
f_0(\la,s)&=\cos(\la s)=1-\frac{\la^2s^2}{2}+o(s^3),
\\
f_1(\la,s)&=s+o(s^3),
\\
f_2(\la,s)&=\frac{1}{2}s^2+o(s^3).
\end{align*}
So, using \eqref{eq:tub1} and \eqref{eq:ai}, we have
\begin{multline}
\label{eq:power3}
|U_r|=A(U)\,r+\frac{1}{2}\bigg(\int_UHdP\bigg)r^2
\\
-\frac{2}{3}\bigg(\int_U\bigg\{e_1\bigg(\ntnh\bigg)+\bigg(\ntnh\bigg)^2\bigg\}dP\bigg)r^3+o(r^4),
\end{multline}
where $A(U)$ is the sub-Riemannian area of $U$ and $dP$ is the sub-Riemannian perimeter measure on $S$, defined as $|N_h|dS$ ($dS$ is the Riemannian measure on $S$).
\end{remark}

\subsection{The case of $\hh^n$, $n\ge 2$}

For higher dimensional Heisenberg groups we have

\begin{theorem}
\label{thm:steiner2}
Let $S\subset\hh^n$, $n\ge 2$, be a hypersurface of class $C^k$, $k\ge 2$, and let $U\subset S$ be an open subset such that $\overline{U}\subset S\setminus S_0$. Then the volume of the tubular neighborhood $U_r=\{p\in\hh^1:\xi_S(p)\in U, d(p,S)<r\}$ is given by
\begin{equation}
|U_r|=\int_U\bigg\{ \int_0^r |\det(B(s))|\,ds\bigg\}\,dS,
\end{equation}
where $B(s)$ is the matrix in \eqref{eq:matrix}. The function $|\det(B(s))|$ is an analytic function of $\la$ and $s$ multiplied by coefficients involving $\escpr{N,T}/\mnh$, $\mnh$, the horizontal gradient in $S$ of the function $\escpr{N,T}/\mnh$ and the principal curvatures of the horizontal second fundamental form.
\end{theorem}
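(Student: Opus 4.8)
The plan is to assemble the formula from the coarea and area computations already carried out above, and then to read off the structure of $\det(B(s))$ from the explicit solution of the Jacobi equations.

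For the integral formula I would combine the coarea identity \eqref{eq:volumeur}, $|U_r|=\int_0^r A(S_t\cap\xi^{-1}(U))\,dt$, with the expression \eqref{eq:areadetb}, $A(S_t\cap\xi^{-1}(U))=\int_U|\det(B(t))|\,dS$. This yields $|U_r|=\int_0^r\big\{\int_U|\det(B(t))|\,dS\big\}\,dt$, and since the integrand $(q,t)\mapsto|\det(B(t))|$ is nonnegative and continuous on $U\times[0,r]$ (the surface being $C^k$ with $k\ge2$, so $\nuh$ is $C^{k-1}$ and the entries of $A$ depend continuously on $q$, while the $f_i(\la,s)$ are analytic), Tonelli's theorem permits exchanging the order of integration to obtain the stated formula.

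For the structure of $\det(B(s))$, I would fix $q\in U$ and choose the adapted orthonormal basis of $T_qS$ consisting of $e_1=Z=J(\nuh)$, vectors $e_2,\dots,e_{2n-1}$ spanning the remaining part of $T_qS\cap\hhh_q$, and $e_{2n}$ the unit vector in the direction of $\escpr{N,T}\nuh-\mnh T$ (which is itself a unit vector since $|N|=1$). As recalled in the construction preceding the theorem, the columns of $B(s)$ are the components $\escpr{E_k,J(\dga)}$, $\escpr{E_k,T}$, $\escpr{E_k,X_j}$, $\escpr{E_k,Y_j}$ ($j\ge2$) along $\ga$ of the Jacobi fields $E_k$ with $E_k(0)=e_k$, $\dot E_k(0)=\nabla_{e_k}\nuh+2\escpr{Z,e_k}\,T_q$, and curvature derivative $\la_k'=2e_k(\escpr{N,T}/\mnh)$. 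By Lemma~\ref{lem:jacobi-2} each $E_k$ decouples into a $\{\dga,J(\dga),T\}$-part governed by \eqref{eq:jacobiequations} and a $\{X_j,Y_j\}_{j\ge2}$-part governed by the homogeneous system \eqref{eq:uivi}. Solving \eqref{eq:uivi} explicitly (as in \eqref{eq:a_ib_i}) expresses $\escpr{E_k,X_j}$ and $\escpr{E_k,Y_j}$ as combinations of $1$, $f_1(\la,s)$, $f_2(\la,s)$; Corollary~\ref{cor:jacobiequations-1} gives $\escpr{E_k,T}=c_k(s)$ via \eqref{eq:expc}, and then $\escpr{E_k,J(\dga)}=\tfrac12\dot c_k(s)$ by \eqref{eq:jacobiv}, where $c_k(0)$, $\dot c_k(0)$, $\ddot c_k(0)$ are computed from $e_k$ and $\dot E_k(0)$ exactly as in \eqref{eq:ci}, \eqref{eq:ci'0}, \eqref{eq:ci''0}. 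Hence every entry of $B(s)$ is a finite linear combination of the analytic functions $1$, $\cos(\la s)$, $\sin(\la s)$, $f_1(\la,s)$, $f_2(\la,s)$, $k(\la,s)$ of the variables $\la$ and $s$, with coefficients built from the initial data.

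It remains to identify those coefficients geometrically. The components of $e_k$ in the adapted frame are elementary functions of $\mnh$ and $\escpr{N,T}$; for horizontal $e_k$, $k\le 2n-1$, the horizontal part of $\dot E_k(0)$ is $\nabla_{e_k}\nuh=-A(e_k)-(\escpr{N,T}/\mnh)\,J(e_k)_{ht}$ by \eqref{eq:sigma}, so it is expressible through the principal curvatures of the horizontal second fundamental form and $\escpr{N,T}/\mnh$; for $k=2n$, $\nabla_{e_{2n}}\nuh$ is given by \eqref{eq:nablaEnuh} in terms of $\nabla_S^h(\escpr{N,T}/\mnh)$ and $(\escpr{N,T}/\mnh)^2Z$; and $\la_k'=2e_k(\escpr{N,T}/\mnh)$ equals $2\escpr{\nabla_S^h(\escpr{N,T}/\mnh),e_k}$ for $k\le 2n-1$ and the tangent derivative $2e_{2n}(\escpr{N,T}/\mnh)$ for $k=2n$. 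Substituting, each entry of $B(s)$ becomes a sum of analytic functions of $(\la,s)$ times polynomials in $\escpr{N,T}/\mnh$, $\mnh$, the horizontal gradient in $S$ of $\escpr{N,T}/\mnh$, and the principal curvatures; since $\det(B(s))$ is a polynomial in its entries, it has the same form, which is the assertion. The main point requiring care is the non-horizontal direction $e_{2n}$, where one must invoke \eqref{eq:nablaEnuh} to express $\nabla_{e_{2n}}\nuh$ and verify that the vertical correction $2\escpr{Z,e_k}T_q$ in $\dot E_k(0)$ feeds only into $\dot c_k(0)$, consistently with \eqref{eq:ci'0}; the rest is a routine extension of the $\hh^1$ computation of the previous subsection.
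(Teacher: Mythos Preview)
Your proposal is correct and follows essentially the same approach as the paper: the integral formula is obtained by combining the coarea identity \eqref{eq:volumeur} with the area expression \eqref{eq:areadetb}, and the structure of $\det(B(s))$ is read off from the explicit solutions of the Jacobi equations in Lemma~\ref{lem:jacobi}, Lemma~\ref{lem:jacobi-2} and Corollary~\ref{cor:jacobiequations-1}, with the initial data identified via \eqref{eq:sigma} and \eqref{eq:nablaEnuh}. The only cosmetic difference is that the paper places the non-horizontal tangent vector $\escpr{N,T}\nuh-\mnh T$ as $e_2$ rather than $e_{2n}$, which of course does not affect the argument.
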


The proof of Theorem~\ref{thm:steiner2} was given at the beginning of Sestion~\ref{sec:steiner}. We now make a choice of the orthonormal basis $e_i$ of $T_qS$.

For fixed $q\in U$, we take an orthonormal basis $(e_1,\ldots,e_{2n})$  of $T_qS$ so that $e_1=J((\nuh)_q)$, $e_2=(\escpr{N,T}\nuh-|N_h|T)_q$, and the remaining vectors $e_j$, $j>2$, are chosen so that
\begin{align*}
(e_3,e_4,\ldots,e_{2n-1},e_{2n})&=(e_3,J(e_3),\ldots,e_{2n-1},J(e_{2n-1}))
\\
&=((X_2)_q,(Y_2)_q,\dots,(X_n)_q,(Y_n)_q).
\end{align*}
With this notation, we have $e_{2i-1}^\ell=X_i$ and $e_{2i}^\ell=Y_i$ for all $i\ge 2$.

We shall consider the Jacobi fields $E_i$, $i=1,\ldots,2n$ along the geodesic $r\mapsto \exp_S(q,r(\nuh)_q)$ satisfying $E_i(0)=e_i$, $\dot{E}_i(0)=\nabla_{e_i}\nuh+2\escpr{J(\dga(0)),e_i}\,T_q$ and $\la_i'=e_i(2\escpr{N,T}/\mnh)$. We let $c_i=\escpr{E_i,T}$.

Using the third equation in \eqref{eq:jacobiequations}, we rewrite the first two columns of the matrix $B$ so that

\begin{equation}
\label{eq:matrix-2}
B={\setlength{\arraycolsep}{3pt}\begin{pmatrix}
\tfrac{1}{2}\dot{c}_1 & c_1 & \escpr{E_1,X_2}  & \escpr{E_1,Y_2} &\ldots  & \escpr{E_1,X_n} & \escpr{E_1,Y_n}
\\
\tfrac{1}{2}\dot{c}_2 & c_2 & \escpr{E_2,X_2}  & \escpr{E_2,Y_2} &\ldots  & \escpr{E_2,X_n} & \escpr{E_2,Y_n}
\\
\vdots & \vdots & \vdots& \vdots & \ddots & \vdots & \vdots &
\\
\tfrac{1}{2}\dot{c}_i & c_i & \escpr{E_i,X_2}  & \escpr{E_i,Y_2} & \ldots  & \escpr{E_i,X_n} & \escpr{E_i,Y_n}
\\
\vdots & \vdots &  \vdots & \vdots & \ddots & \vdots 
\\
\tfrac{1}{2}\dot{c}_{2n} & c_{2n} & \escpr{E_{2n},X_2}  & \escpr{E_{2n},Y_2} & \ldots  & \escpr{E_{2n},X_{2n}} & \escpr{E_{2n},Y_{2n}}
\end{pmatrix}}.
\end{equation}

The first two columns of the matrix $B(s)$ can be compued from Corollary~\ref{cor:jacobiequations-1} since
\begin{align*}
c_i(s)&=c_i(0)+\dot{c}_i(0)f_1(\la, s)+\ddot{c}_i(0)f_2(\la, s)-2\la'_ik(\la, s),
\\
\dot{c}_i(s)&=\dot{c}_i(0)f_0(\la,s)+\ddot{c}_i(0)f_1(\la,s)-2\la'_i f_2(\la,s).
\end{align*}
The coefficients $c_i(0),\dot{c}_i(0),\ddot{c}_i(0)$ can be obtained as in the case of the first Heisenberg group, using formulas \eqref{eq:ci'0} and \eqref{eq:ci''0}, to get
\begin{align*}
c_i(0)&=0, i\neq 2, \qquad c_2(0)=-\mnh,
\\
\dot{c}_i(0)&=0, i\neq 1, \qquad \dot{c}_1(0)=2,
\end{align*}
and
\begin{align*}
\ddot{c}_i(0)&=2\escpr{e_1,\nabla_{e_i}\nuh},\quad i\neq 2,
\\
\ddot{c}_2(0)&=-2\mnh e_1\big(\tfrac{\escpr{N,T}}{\mnh}\big).
\end{align*}

The remaining columns can be calculated from the expression for the horizontal component of a Jacobi field given in Lemma~\ref{lem:jacobi} and the fact that $X_j$, $Y_j$, $j\ge 2$, are orthogonal to $\dga$ and $J(\dga)$ along $\ga$. From equation \eqref{eq:explicitUh} we obtain the equality
\begin{multline*}
(E_i)_h(s)=(e_i^\ell)_{\ga(s)}+f_1(\la,s)(\dot{e}_i)^\ell_{\ga(s)}-\la f_2(\la,s)J(\dot{e}_i)^\ell_{\ga(s)}
\\
+e_i(\la)\big[k(\la,s)\dga(s)+\la f_2(\la,s)J(\dga(s))\big], \quad i= 1,\ldots,2n,
\end{multline*}
where $\dot{e}_i=\nabla_{e_i}\nuh$, and so
\begin{equation}
\label{eq:Bj}
\begin{split}
\escpr{E_i,X_j}&=\escpr{e_i,e_{2j-1}}+f_1(\la,s)\escpr{\dot{e}_i,e_{2j-1}}-\la f_2(\la,s)\escpr{J(\dot{e}_i),e_{2j-1}},
\\
\escpr{E_i,Y_j}&=\escpr{e_i,e_{2j}}+f_1(\la,s)\escpr{\dot{e}_i,e_{2j}}+\la f_2(\la,s)\escpr{J(\dot{e}_i),e_{2j}},
\end{split}
\end{equation}
for all $i=1,\ldots,2n$ and $j>2$. The above formulas follow since $\escpr{(e_i^\ell)_p,(e_j^\ell)_p}=\escpr{e_i,e_j}$ for all $i,j$ and $p\in\hh^n$.

We now use these computations to calculate the series development of $|U_r|$ up to order three. First we notice that $|\det(B(s))|=-\det(B(s))$ for $s>0$ small enough. The derivatives of the function $s\mapsto\det(B(s))$ at $s=0$ will be obtained writing the matrix $B(s)$ as a function of their columns $B^1(s),\ldots,B^{2n}(s)$, and using the classical formulas
\begin{equation*}
\frac{d}{ds}\det(B(s))=\sum_{i=1}^{2n} \det(B^1,\ldots, \dot{B}^i,\ldots,B^{2n})(s),
\end{equation*}
and
\begin{multline*}
\frac{d^2}{ds^2}\det(B(s))=\sum_{i=1}^{2n}\det(B^1,\ldots,\ddot{B}^i,\ldots,B^{2n})(s)
\\
+\sum_{i\neq j} \det(B^1,\ldots,\dot{B}^i,\ldots,\dot{B}^j,\ldots,B^{2n})(s).
\end{multline*}

The first column of the matrix $B$ and their derivatives up to order two are computed from formulas \eqref{eq:ci'0}, \eqref{eq:ci''0} and equation \eqref{eq:odec}:
\[
B^1(0)=\begin{pmatrix}
1 \\ 0 \\ 0 \\ \vdots \\ 0
\end{pmatrix},\quad
\dot{B}^1(0)=\begin{pmatrix}
\escpr{e_1,\nabla_{e_1}\nuh} \\ -\mnh e_1\big(\tfrac{\escpr{N,T}}{\mnh}\big) 
\\ \escpr{e_1,\nabla_{e_3}\nuh} \\ \vdots \\ \escpr{e_1,\nabla_{e_{2n}}\nuh}
\end{pmatrix},\qquad
\ddot{B}^1(0)=\begin{pmatrix}
-\la^2-e_1(\la) \\ -e_2(\la) \\ -e_3(\la)\\ \vdots \\ -e_{2n}(\la)
\end{pmatrix}.
\]
The second column and their derivatives are computed from \eqref{eq:ci'0} and \eqref{eq:ci''0}:
\[
B^2(0)=\begin{pmatrix} 0 \\ -\mnh \\ 0 \\ \vdots \\ 0 \end{pmatrix}, \quad
\dot{B}^2(0)=\begin{pmatrix} 2 \\ 0 \\ 0 \\ \vdots \\ 0 \end{pmatrix}, \quad
\ddot{B}^2(0)=\begin{pmatrix} 2\escpr{e_1,\nabla_{e_1}\nuh} \\ -2\mnh e_1\big(\tfrac{\escpr{N,T}}{\mnh}\big) 
\\ 2\escpr{e_1,\nabla_{e_3}\nuh} \\ \vdots \\ 2\escpr{e_1,\nabla_{e_{2n}}\nuh}
\end{pmatrix}
\]
The remaining columns $B^i$, $i\ge 3$, and their derivatives at $s=0$ are computed from \eqref{eq:Bj}:
\[
B^i(0)=\begin{pmatrix} 0 \\ \vdots \\ \stackrel{(i)}{1} \\ \vdots\\ 0 \end{pmatrix}, \quad
\dot{B}^i(0)=\begin{pmatrix} \escpr{\nabla_{e_1}\nuh,e_i} \\ \vdots \\ \escpr{\nabla_{e_{2n}}\nuh,e_i}\end{pmatrix},\quad
\ddot{B}^i(0)=\begin{pmatrix} -\tfrac{\la}{2}\escpr{J(\nabla_{e_1}\nuh),e_i} \\ \vdots \\ -\tfrac{\la}{2}\escpr{J(\nabla_{e_{2n}}\nuh),e_i}\end{pmatrix}.
\]

Hence we get
\[
\frac{d}{ds}\bigg|_{s=0}\det(B(s))=-\mnh\sum_{i\neq 2}\escpr{\nabla_{e_i}\nuh,e_i}=-\mnh H,
\]
where $H$ is the mean curvature of $S$, and
\begin{align}
\frac{d^2}{ds^2}\bigg|_{s=0}\!\!\det(B(s))&=\mnh(\la^2+e_1(\la))-2\mnh e_1\bigg(\ntnh\bigg)\notag
\\
\label{eq:detb''}
&\qquad-\mnh\sum_{i=3}^{2n}\frac{\la}{2}\escpr{J(\nabla_{e_i}\nuh),e_i}
\\
&+4\mnh e_1\bigg(\ntnh\bigg)\notag
\\
&-\mnh\sum_{i,j\neq 2, i\neq j}\!\!\big(\escpr{\nabla_{e_i}\nuh,e_i}\escpr{\nabla_{e_j}\nuh,e_j}-\escpr{\nabla_{e_i}\nuh,e_j}\escpr{\nabla_{e_j}\nuh,e_i}\big).\notag
\end{align}
Adding the first and third lines in \eqref{eq:detb''} we obtain
\[
4\mnh\bigg(e_1\bigg(\ntnh\bigg)+\bigg(\ntnh)\bigg)^2\bigg).
\]
To treat the second line in \eqref{eq:detb''} we notice that the quantity $\sum_{i=3}^{2n}\escpr{J(\nabla_{e_i}\nuh,e_i}$ is the trace of the bilinear form $(v,w)\mapsto \escpr{J(\nabla_v\nuh),w}$ in the subspace $TS\cap\hhh$ (since $\escpr{J(\nabla_{e_1}\nuh,e_1}=0$). Hence it can be computed using any orthonormal basis in $TS\cap\hhh$. Taking one basis composed of principal directions $v_i, i=1,\ldots,(2n-1)$, for the horizontal second fundamental form we obtain
\[
\sum_{i=3}^{2n}\escpr{J(\nabla_{e_i}\nuh),e_i}=\sum_{i=1}^{2n-1} \ntnh\escpr{J(v_i)_{ht},J(v_i)}=(2n-2)\ntnh.
\]
To treat the last line in \eqref{eq:detb''} we first notice that the terms corresponding to $i=j$ can be added since they all vanish. This way we obtain
\[
\sum_{i,j\neq 2}\escpr{\nabla_{e_i}\nuh,e_i}\escpr{\nabla_{e_j}\nuh,e_j}-\sum_{i,j\neq 2}\escpr{\nabla_{e_i}\nuh,e_j}\escpr{\nabla_{e_j}\nuh,e_i}.
\]
The first sum is just the squared mean curvature $\big(\sum_{i\neq 2}\escpr{\nabla_{e_i}\nuh,e_i}\big)^2$. The second one can be expressed as
\[
\frac{1}{2}\sum_{i,j\neq 2}\big(\escpr{\nabla_{e_i}\nuh,e_j}+\escpr{\nabla_{e_j}\nuh,e_i}\big)^2-\sum_{i\neq 2}|\nabla_{e_i}\nuh|^2.
\]
Both quantities are independent of the orthonormal basis chosen. The first one since it is the squared norm of the symmetric bilinear form $(v,w)\mapsto \escpr{\nabla_v\nuh,w}+\escpr{\nabla_w\nuh,v}$. The second one is just the squared norm of the horizontal second fundamental form. If we choose an orthonormal basis of principal directions we get the value $|\sg|^2$. Hence the last line in \eqref{eq:detb''} is equal to
\[
\mnh\big(-H^2+|\sg|^2\big).
\]

In summary,
\[
\frac{d^2}{ds^2}\det(B(s))\bigg|_{s=0}\!\!\!\!=\mnh\bigg(4e_1\bigg(\ntnh\bigg)+(2n+2)\bigg(\ntnh\bigg)^2+|\sg|^2-H^2\bigg).
\]

We have thus proved the following result

\begin{theorem}
\label{thm:steiner23}
Let $S\subset\hh^n$, $n\ge 2$, be a hypersurface of class $C^k$, $k\ge 2$, and let $U\subset S$ be an open subset such that $\overline{U}\subset S\setminus S_0$. Then the volume of the tubular neighborhood $U_r=\{p\in\hh^1:\xi_S(p)\in U, d(p,S)<r\}$ can be written as
\begin{equation}
\label{eq:steiner23}
\begin{split}
|U_r|&=A(U)r+\frac{1}{2}\bigg(\int_U HdP\bigg)r^2
\\
&-\frac{1}{6}\bigg(\int_U\bigg(4e_1\bigg(\ntnh\bigg)+(2n+2)\bigg(\ntnh\bigg)^2+|\sg|^2-H^2\bigg)dP\bigg)r^3
\\
&+o(r^4),
\end{split}
\end{equation}
where $dP=\mnh dS$ is the sub-Riemannian area element on $S$.
\end{theorem}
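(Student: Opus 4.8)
The plan is to combine the two integral reductions already established in this section and then Taylor-expand in the radial variable. By the coarea identity \eqref{eq:volumeur} and the area formula \eqref{eq:areadetb}, for $r$ smaller than $\reach(S,\overline U)$ we have
\[
|U_r|=\int_0^r A(S_t\cap\xi^{-1}(U))\,dt=\int_U\bigg\{\int_0^r|\det(B(s))|\,ds\bigg\}\,dS,
\]
where $B(s)$ is the $2n\times 2n$ matrix \eqref{eq:matrix-2} built from the Jacobi fields $E_i$ along the geodesic $s\mapsto\exp_S(q,s\,(\nuh)_q)$ with the initial data $E_i(0)=e_i$, $\dot E_i(0)=\nabla_{e_i}\nuh+2\escpr{J(\dga(0)),e_i}T_q$, $\la_i'=e_i(\la)$. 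Since $\det(B(0))=-\mnh(q)\neq 0$, we have $|\det(B(s))|=-\det(B(s))$ for $s>0$ small, so it suffices to expand $\det(B(s))$ to second order at $s=0$ and integrate.

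I would first fix the adapted orthonormal basis $e_1=J(\nuh)$, $e_2=\escpr{N,T}\nuh-\mnh T$, and $e_{2i-1}=X_i,\ e_{2i}=Y_i$ for $i\ge 2$, and record the columns $B^i(s)$ and their first two $s$-derivatives at $s=0$. The first two columns are governed by the scalar functions $c_i=\escpr{E_i,T}$, whose values, first and second derivatives at $0$ are read off from \eqref{eq:ci'0}, \eqref{eq:ci''0} and the third-order equation \eqref{eq:odec} of Corollary~\ref{cor:jacobiequations-1}; the remaining columns come from the explicit horizontal Jacobi field formula \eqref{eq:explicitUh} in Lemma~\ref{lem:jacobi}, specialized in \eqref{eq:Bj}. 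Then I apply the classical column-expansion formulas
\[
\tfrac{d}{ds}\det(B)=\sum_i\det(B^1,\dots,\dot B^i,\dots,B^{2n}),\qquad
\tfrac{d^2}{ds^2}\det(B)=\sum_i\det(\dots,\ddot B^i,\dots)+\sum_{i\neq j}\det(\dots,\dot B^i,\dots,\dot B^j,\dots),
\]
evaluate at $s=0$, and collect terms, obtaining $\det(B(0))=-\mnh$ and $\tfrac{d}{ds}\det(B)|_0=-\mnh H$.

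The main obstacle is the computation and simplification of $\tfrac{d^2}{ds^2}\det(B)|_0$: it produces several traces over the horizontal tangent space $TS\cap\hhh_q$ which must be recognized as geometric invariants. The key observation is that each such trace is independent of the chosen orthonormal basis, so it may be evaluated in a basis of principal directions of the horizontal second fundamental form $A$ of \S~\ref{sec:hor2nd}. Concretely: $\sum_{i\ge 3}\escpr{J(\nabla_{e_i}\nuh),e_i}$ collapses to $(2n-2)\ntnh$ via the definition \eqref{eq:sigma}; the $\nabla_{e_2}\nuh$ contribution is rewritten by \eqref{eq:nablaEnuh}; and the quadratic cross-terms $\sum_{i,j}\big(\escpr{\nabla_{e_i}\nuh,e_i}\escpr{\nabla_{e_j}\nuh,e_j}-\escpr{\nabla_{e_i}\nuh,e_j}\escpr{\nabla_{e_j}\nuh,e_i}\big)$ reorganize into $H^2-|\sg|^2$ using the symmetry in Proposition~\ref{prop:sigmaproperties}(1) and \eqref{eq:sigma2}. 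Adding the pieces yields
\[
\tfrac{d^2}{ds^2}\det(B)\big|_{s=0}=\mnh\bigg(4e_1\bigg(\ntnh\bigg)+(2n+2)\bigg(\ntnh\bigg)^2+|\sg|^2-H^2\bigg).
\]

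Finally I integrate in $s$, using $\int_0^r(-\det B(s))\,ds=-\det B(0)\,r-\tfrac12\tfrac{d}{ds}\det B|_0\,r^2-\tfrac16\tfrac{d^2}{ds^2}\det B|_0\,r^3+o(r^4)$, substitute the three computed values, and then integrate over $U$ with $dP=\mnh\,dS$ and $A(U)=\int_U\mnh\,dS$. This gives exactly \eqref{eq:steiner23}. The only delicate points beyond bookkeeping are the basis-independence arguments for the traces and keeping careful track of which sums run over all of $TS\cap\hhh_q$ versus the orthogonal complement of $Z=e_1$ in it.
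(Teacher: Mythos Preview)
Your proposal is correct and follows essentially the same approach as the paper: reduce to $\int_U\int_0^r|\det B(s)|\,ds\,dS$ via \eqref{eq:volumeur} and \eqref{eq:areadetb}, Taylor-expand $\det B(s)$ at $s=0$ using the column-derivative formulas, and identify the resulting traces over $TS\cap\hhh_q$ as $H$, $(2n-2)\tfrac{\escpr{N,T}}{\mnh}$, and $H^2-|\sg|^2$ by passing to a basis of principal directions. The only minor caveat is that the cross-term $\sum_{i,j}\escpr{\nabla_{e_i}\nuh,e_j}\escpr{\nabla_{e_j}\nuh,e_i}$ is handled in the paper not directly via the symmetry of $A$ but by symmetrizing the bilinear form $(v,w)\mapsto\escpr{\nabla_v\nuh,w}$ and recognizing the squared norm of the symmetrized form together with $\sum_i|\nabla_{e_i}\nuh|^2$; your invocation of Proposition~\ref{prop:sigmaproperties}(1) amounts to the same thing once the $J$-correction terms are tracked.
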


Let us finally look at the case of an umbilic hypersurface

\begin{theorem}
\label{thm:umbilictube}
Let $S\subset\hh^n$, $n\ge 2$, be an umbilic hypersurface of class $C^k$, $k\ge 2$, and let $U\subset S$ be an open subset such that $\overline{U}\subset S\setminus S_0$. Then the volume of the tubular neighborhood $U_r=\{p\in\hh^1:\xi_S(p)\in U, d(p,S)<r\}$ can be written as
\begin{equation}
\label{eq:steinerumb}
|U_r|=\int_U\bigg\{\int_0^r\frac{1}{2}\,(\dot{c}_1c_2-c_1\dot{c}_2)(s)\,\det(D(s))^{n-1}(s)\,ds\bigg\}\,dS,
\end{equation}
where $D$ is the matrix
\[
D=\begin{pmatrix}
1-\mu f_1-\tfrac{\la^2}{2}f_2 & -\tfrac{\la}{2}f_1+\la\mu f_2 \\
\tfrac{\la}{2}f_1-\la\mu f_2 & 1-\mu f_1-\tfrac{\la^2}{2} f_2
\end{pmatrix},
\]
and $\mu$ is the principal curvature of any tangent horizontal vector orthogonal to $J(\nuh)$.
\end{theorem}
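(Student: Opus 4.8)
The plan is to start from the integral formula of Theorem~\ref{thm:steiner2}, $|U_r|=\int_U\{\int_0^r|\det(B(s))|\,ds\}\,dS$, with $B(s)$ the matrix \eqref{eq:matrix} (rewritten as in \eqref{eq:matrix-2}), and to show that umbilicity forces $B(s)$ to be block diagonal: a $2\times 2$ block coming from the first two rows and columns, and a $(2n-2)\times(2n-2)$ block that itself splits into $n-1$ copies of $D(s)$. Fix $q\in U$ and use the adapted orthonormal basis $e_1=Z_q=J((\nuh)_q)$, $e_2=(\escpr{N,T}\nuh-\mnh T)_q$, $e_{2j-1}=(X_j)_q$, $e_{2j}=(Y_j)_q$ for $j\geq 2$, together with the Jacobi fields $E_i$ along $s\mapsto\exp_S(q,s(\nuh)_q)$ with $E_i(0)=e_i$, $\dot E_i(0)=\nabla_{e_i}\nuh+2\escpr{J(\dga(0)),e_i}T_q$, $\la_i'=e_i(\la)$, as set up at the start of Section~\ref{sec:steiner}; here $\la=2\escpr{N_q,T_q}/\mnh(q)$ and $\dga(0)=(\nuh)_q$.

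First I would compute the covariant derivatives $\nabla_{e_i}\nuh$ and the scalars $\la_i'$ from the umbilic hypotheses. Since $J(e_1)_{ht}=(-\nuh)_{ht}=0$ and $A(Z)=\rho Z$, the definition \eqref{eq:sigma} gives $\nabla_{e_1}\nuh=-\rho e_1$; since $\escpr{e_1,e_2}=0$, Proposition~\ref{prop:sigmaproperties}(2) combined with the umbilic identity $\nabla_S^h(\ntnh)=Z(\ntnh)\,Z$ of Proposition~\ref{prop:cchy} forces $\nabla_{e_2}\nuh\in\mathrm{span}\{e_1\}$; and for $i\geq 3$, $A(e_i)=\mu e_i$ with $J(e_i)_{ht}=J(e_i)$ gives $\nabla_{e_i}\nuh=-\mu e_i-\tfrac{1}{2}\la\,J(e_i)\in\mathrm{span}\{(X_k)_q,(Y_k)_q:k\geq 2\}$, while $\la_i'=2e_i(\ntnh)=0$ because $e_i$ is a tangent horizontal vector orthogonal to $Z$ (Proposition~\ref{prop:cchy}). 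Consequently $\dot E_1(0)=-\rho e_1+2T_q$ and $\dot E_2(0)$ both lie in $\mathrm{span}\{J(\dga(0)),T_q\}$, while $\dot E_i(0)=\nabla_{e_i}\nuh$ for $i\geq 3$ lies in the orthogonal complement $\mathrm{span}\{(X_k)_q,(Y_k)_q:k\geq 2\}$ of $\mathrm{span}\{\dga(0),J(\dga(0)),T_q\}$.

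Next I would feed this into the component equations of Lemma~\ref{lem:jacobi-2}. Writing $E_i=a\dga+bJ(\dga)+cT+\sum_{k\geq 2}(u_kX_k+v_kY_k)$: for $i=1,2$ the data above give $u_k(0)=v_k(0)=\dot u_k(0)=\dot v_k(0)=0$, and since \eqref{eq:uivi} is a linear determined system this yields $u_k\equiv v_k\equiv 0$, so $E_1,E_2$ stay in $\mathrm{span}\{\dga,J(\dga),T\}$; for $i\geq 3$ one gets $a(0)=b(0)=c(0)=\dot a(0)=\dot b(0)=\dot c(0)=0$ and $\la_i'=0$, so \eqref{eq:jacobiequations} gives $\ddot b+\la^2 b=0$ with zero data, hence $b\equiv 0$ and then $a\equiv c\equiv 0$, so $E_i$ stays in $\mathrm{span}\{X_k,Y_k:k\geq 2\}$. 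Thus $B(s)$ is block diagonal; the $2\times 2$ block, using $\escpr{E_i,J(\dga)}=\tfrac{1}{2}\dot c_i$ from \eqref{eq:jacobiequations}, has determinant $\tfrac{1}{2}(\dot c_1c_2-c_1\dot c_2)(s)$. For the lower block I would use the explicit horizontal Jacobi field \eqref{eq:explicitUh} — whose $\la_i'$-term lies in $\mathrm{span}\{\dga,J(\dga)\}$ and is therefore orthogonal to the left-invariant fields $X_k,Y_k$ ($k\geq 2$) along $\ga$ by Lemma~\ref{lem:geocoor} — together with $\nabla_{e_i}\nuh=-\mu e_i-\tfrac{1}{2}\la J(e_i)$: the sub-block pairing rows $e_{2j'-1},e_{2j'}$ with columns $X_j,Y_j$ vanishes when $j'\neq j$ by orthogonality, and equals $D(s)$ when $j'=j$ after a direct computation. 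Hence $\det(B(s))=\tfrac{1}{2}(\dot c_1c_2-c_1\dot c_2)(s)\det(D(s))^{n-1}$, and inserting $|\det(B(s))|$ into Theorem~\ref{thm:steiner2} (the product has constant sign for small $s$) gives \eqref{eq:steinerumb}.

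The main obstacle is the block-diagonalization in the third step: everything hinges on the covariant derivatives $\nabla_{e_i}\nuh$ and the scalars $\la_i'$ respecting the splitting $T_q\hh^n=\mathrm{span}\{\dga(0),J(\dga(0)),T_q\}\oplus\mathrm{span}\{(X_k)_q,(Y_k)_q:k\geq 2\}$, which is exactly where the umbilic structure (through Propositions~\ref{prop:cchy} and~\ref{prop:sigmaproperties}) is used, and on the less obvious point that the explicit Jacobi solution keeps this splitting invariant in time — the $\la_i'$ correction term in \eqref{eq:explicitUh} could a priori couple the two summands but is harmless against the $X_k,Y_k$ directions. Once this is in place, identifying the $2\times 2$ block with $D(s)$ and extracting the factor $\tfrac{1}{2}(\dot c_1c_2-c_1\dot c_2)$ is routine bookkeeping.
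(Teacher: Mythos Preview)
Your proposal is correct and follows essentially the same approach as the paper: both arguments use the umbilic structure (via Proposition~\ref{prop:cchy}) to compute the initial data $\nabla_{e_i}\nuh$ and $\la_i'$, establish that $B(s)$ is block diagonal with the splitting $\mathrm{span}\{\dga,J(\dga),T\}\oplus\mathrm{span}\{X_k,Y_k:k\ge 2\}$, and then identify each $2\times 2$ sub-block of the lower part with $D(s)$ using \eqref{eq:explicitUh}. The only cosmetic difference is that you argue the vanishing of the off-diagonal components through the ODE systems \eqref{eq:uivi} and \eqref{eq:jacobiequations} of Lemma~\ref{lem:jacobi-2}, whereas the paper obtains $c_i\equiv 0$ for $i\ge 3$ from the third-order equation of Corollary~\ref{cor:jacobiequations-1} and reads off the orthogonality of $(E_1)_h,(E_2)_h$ to the $X_k,Y_k$ directly from the explicit formula \eqref{eq:explicitUh}.
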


\begin{proof}
We use the same notation as in the previous case. Proposition~\ref{prop:cchy} implies that
\begin{equation}
\label{eq:Ae1umb}
A(e_1)=\rho e_1, \quad A(e_i)=\mu e_i,\ i\ge 3,
\end{equation}
and
\begin{equation}
\label{eq:nablalaumb}
\nabla_S^h\bigg(\ntnh\bigg)=e_1\bigg(\ntnh\bigg)\,e_1=\bigg(\mu(\mu-\rho)-\bigg(\ntnh\bigg)^2\bigg)\,e_1.
\end{equation}
In particular,
\[
e_i\bigg(\ntnh\bigg)=0,\quad\text{for all } i\ge 3.
\]
When $i\ge 3$, we have
\begin{equation}
\label{eq:A(ei)}
\nabla_{e_i}\nuh=-\mu e_i-\frac{\la}{2}\,J(e_i).
\end{equation}

We observe first that, when $i\ge 3$, $c_i(0)=\dot{c}_i(0)=\ddot{c}_i(0)=0$ because of equations \eqref{eq:ci'0}, \eqref{eq:A(ei)} and \eqref{eq:ci''0}. Since $e_i(\la)=0$ we get that $c_i(s)\equiv 0$ for $i\ge 3$.

Equations \eqref{eq:explicitUh}, \eqref{eq:Ae1umb}, \eqref{eq:nablaEnuh} and \eqref{eq:nablalaumb} imply that $(E_1)_h$ and $(E_2)_h$ are linear combination of $e_1$ and $J(e_1)$. Hence the scalar products of $E_1$ and $E_2$ with any $X_i$ or $Y_i$, $i\ge 2$, is identically zero.

Taking now any $i\in\{1,\ldots,n\}$ we get from \eqref{eq:explicitUh} and \eqref{eq:A(ei)} that
\begin{align*}
E_{2i-1}&=\big(1-\mu f_1-\tfrac{\la^2}{2}f_2\big)\,e_{2i-1}^\ell+\big(-\tfrac{\la}{2}f_1+\la\mu f_2\big)\,e_{2i}^\ell,
\\
E_{2i}&=\big(\tfrac{\la}{2}f_1-\la\mu f_2\big)\,e_{2i-1}^\ell+\big(1-\mu f_1-\tfrac{\la^2}{2}f_2\big)\,e_{2i}^\ell.
\end{align*}

Hence the Jacobian matrix \eqref{eq:matrix-2} is of the form
\[
\begin{pmatrix}
C & 0 & 0 & \dots & 0 \\
0 & D & 0 & \ldots & 0 \\
0 & 0 & D & \ldots & 0 \\
\vdots & \vdots & \vdots & \ddots & \vdots \\
0 & 0 & 0 & \ldots & D
\end{pmatrix},
\]
where
\[
C=\begin{pmatrix} \tfrac{1}{2}\dot{c}_1 & c_1 \\
\tfrac{1}{2}\dot{c}_2 & c_2 
\end{pmatrix},\quad 
D=\begin{pmatrix}
1-\mu f_1-\tfrac{\la^2}{2}f_2 & -\tfrac{\la}{2}f_1+\la\mu f_2 \\
\tfrac{\la}{2}f_1-\la\mu f_2 & 1-\mu f_1-\tfrac{\la^2}{2} f_2
\end{pmatrix},
\]
that implies \eqref{eq:steinerumb}.
\end{proof}

As an application, we characterize the surfaces $S\subset\hh^1$ of class $C^2$ such that, for any open set $U\Subset S\setminus S_0$ whose closure is a compact subset of $S\setminus S_0$, the Steiner function $|U_r|$ is a polynomial. We will then prove that $S$ is locally a vertical cylinder. By equation \eqref{eq:tub1} we have
\[
|U_r|= \sum_{i=0}^4\int_U\bigg\{\int_0^r a_i f_i(\la, s)\,ds\bigg\}\,dS.
\]

In case $\la\equiv 0$ formulas \eqref{eq:Fi} and \eqref{eq:fi} imply that $f_i(\la,s)=s^i$ for all $i=0,\ldots,4$. In this case it also follows from \eqref{eq:ai} that $a_2=a_3=a_4=0$. So $|U_r|$ is a degree two polynomial. Observe that the equality $\la\equiv 0$ implies that $S\setminus S_0$ is locally a vertical cylinder since the Reeb vector field $T$ is tangent to $S$.

Hence assume that $|U_r|$ is a polynomial for any subset $U\Subset S\setminus S_0$. Then the function $\sum_{i=0}^4 a_if_i(\la,s)$ is a polynomial at every point in $U$. Let us prove that $\la=0$ at any given point reasoning by contradiction. So assume that $\la =0$ at a given point. Straightforward computations show that the series expansion of the functions $f_i(\la,s)$, when $\la\neq 0$ and $i=0,\ldots,4$, are given by 
\begin{align*}
f_0(\la,s)&=\sum_{k=0}^\infty \frac{(-1)^k}{(2k)!}\,(\la s)^{2k},
\\
f_1(\la,s)&=\frac{1}{\la}\sum_{k=0}^\infty \frac{(-1)^k}{(2k+1)!}\,(\la s)^{2k+1},
\\
f_2(\la,s)&=\frac{1}{\la^2}\sum_{k=1}^\infty \frac{(-1)^{k+1}}{(2k)!}\,(\la s)^{2k},
\\
f_3(\la,s)&=\frac{1}{\la^3}\sum_{k=0}^\infty (-1)^{k+1}\frac{2k}{(2k+1)!}\,(\la s)^{2k+1},
\\
f_4(\la,s)&=\frac{1}{\la^4}\sum_{k=1}^\infty (-1)^k\frac{2(k-1)}{(2k)!}\,(\la s)^{2k}.
\end{align*}
Observe that, in the expansions of $f_0,f_2,f_4$ only even terms appear, while in the expressions of $f_1,f_3$ only odd terms appear. Hence we have
\begin{align*}
\sum_{i=0}^4 a_i f_i(\la,s)=a_0&+\sum_{k=1}^\infty\frac{(-1)^k}{(2k)!}\frac{1}{\la^4}\bigg(\la^4a_0-\la^2a_2+2(k-1)a_4\big)\,(\la s)^{2k}
\\
&+\sum_{k=0}^\infty \frac{(-1)^k}{(2k+1)!}\frac{1}{\la^3}\big(\la^2a_1-2ka_3\big)\,(\la s)^{2k+1}.
\end{align*}
In case this function is a polynomial we have
\begin{align*}
\la^4a_0-\la^2a_2+2(k-1)a_4&=0,
\\
\la^2a_1-2k a_3&=0,
\end{align*}
for all $k$ large enough. This implies $a_3=a_4=0$. From these equalities and the expressions \eqref{eq:ai} we get $e_1(\la)=e_2(\la)=0$ and so, again from \eqref{eq:ai}, we obtain~$a_2=0$. It follows from the first equation that $\la^2 a_0=a_2=0$. But this is a contradiction to our assumption $\la\neq 0$ since $a_0\neq 0$.

\bibliography{pr}

\end{document}